\documentclass[10pt,twocolumn,twoside]{IEEEtran}
\ifCLASSINFOpdf
  \usepackage[pdftex]{graphicx}
	
\else

\fi
\usepackage{mathrsfs}
\usepackage{amsthm}
\usepackage{amsmath}
\usepackage{amsfonts}
\usepackage{epstopdf}
\usepackage{tabularx}
\usepackage{color}
\usepackage{mathtools}
\usepackage[utf8]{inputenc}
\usepackage[T1]{fontenc}
\usepackage{array}
\usepackage{multirow}
\usepackage{subfigure}
\usepackage[]{algorithm}
\usepackage[]{algpseudocode}
\usepackage[]{pseudocode}
\newtheorem{theorem}{Theorem}[section]
\newtheorem{lemma}[theorem]{Lemma}
\newtheorem{corollary}[theorem]{Corollary}

\DeclareMathOperator*{\argmin}{arg\,min}

\usepackage{color}

\newcommand{\cA}{\mathcal{A}}
\newcommand{\cC}{\mathcal{C}}
\newcommand{\cF}{\mathcal{F}}
\newcommand{\cH}{\mathcal{H}}

\usepackage{algorithm}
\usepackage{algorithmicx}
\newcommand*\Let[2]{\State #1 $\gets$ #2}
\algrenewcommand\algorithmicrequire{\textbf{Input:}}
\algrenewcommand\algorithmicensure{\textbf{Output:}}
\newcommand{\proj}{{\mathrm{proj}}}
\newcommand{\prox}{{\mathrm{prox}}}

\newcommand{\R}{\mathbb{R}}

\usepackage{xcolor}

\begin{document}
\normalsize
%
\title{Basis Pursuit Denoise with Nonsmooth Constraints}
\author{Robert Baraldi$^1$, Rajiv Kumar$^2$, and Aleksandr Aravkin$^1$.\\
$^1$ Department of Applied Mathematics, University of Washington\\
$^2$ Formerly School of Earth and Atmospheric Sciences, Georgia Institute of Technology, USA; Currently DownUnder GeoSolutions, Perth, Australia}

\maketitle

\normalsize
\begin{abstract}
Level-set optimization formulations with data-driven constraints minimize a regularization functional subject to matching observations
to a given error level. 
These formulations are widely used, particularly for matrix completion and sparsity promotion 
in data interpolation and denoising. 
The misfit level is typically measured in the $\ell_2$ norm, or other smooth metrics.  

In this paper, we present a new flexible algorithmic framework that targets {\it nonsmooth} 
level-set constraints, including  $\ell_1$, $\ell_\infty$, and even $\ell_0$ norms.  
These constraints give greater flexibility for modeling deviations in observation
and denoising, and have significant impact on the solution. 
Measuring error in the $\ell_1$ and $\ell_0$ norms makes the result more robust to large outliers, 
while matching many observations exactly. 

We demonstrate the approach for basis pursuit  denoise (BPDN) problems as well as for 
extensions of BPDN to matrix factorization, with applications to 
interpolation and denoising of 5D seismic data. 
The new methods are particularly promising for seismic applications, where 
the amplitude in the data varies significantly, and measurement noise in low-amplitude 
regions can wreak havoc for standard Gaussian error models.  

\end{abstract}

\begin{IEEEkeywords}
Nonconvex nonsmooth optimization, level-set formulations, basis pursuit denoise, interpolation, seismic data.
\end{IEEEkeywords}

\section{Introduction}
Basis Pursuit Denoise (BPDN) seeks a sparse solution to an under-deterimined system of equations that have been corrupted by noise. 
The classic level-set formulation~\cite{van2008probing,aravkin2018level} is given by 
\begin{equation}
\label{eq:basicBPDN}
	\min_x \|x\|_1 \quad \mbox{s.t.} \quad \|\cA(x) - b\|_2\leq \sigma
\end{equation}
where $\cA:\R^{m\times n}\rightarrow \R^d$ is a linear functional taking unknown parameters $x\in\R^{m\times n}$ to observations $b\in\R^{d}$. 
Problem~\eqref{eq:basicBPDN} is also known as a Morozov formulation (in contrast to Ivanov or Tikhonov \cite{oneto2016tikhonov}). 
The functional $\cA$ can include a transformation to another domain, including Wavelets, Fourier, or Curvelet coefficients~\cite{donohue1998bpnd}, 
as well as compositions of these transforms with other linear operators such as restriction in interpolation problems. 
The parameter $\sigma$ controls the error budget, and is based on an estimate of noise level in the data. 

Theoretical recovery guarantees for classes of operators $\cA$ are developed in \cite{candes2006near} and \cite{tropp2006relax}. 
BPDN and the closely related LASSO formulation have applications to compressed sensing \cite{Recht,candes2006near} and 
 machine learning~\cite{girosi1998ml,tibshirani2004least}, as well as to applied domains including MRI \cite{pauly2007mri}. 
Seismic data is a key use case~\cite{fastlowrank, 5dlowrank, splittingschemes},  
where acquisition is prohibitively expensive and interpolation techniques are used to fill in data volumes by promoting parsimonious representations 
 in the Fourier \cite{sacchi} or Curvelet \cite{herrman} domains. Matricization of the data leads to 
low-rank interpolation schemes~\cite{fastlowrank, 5dlowrank, splittingschemes,lowrankstorage}. 

 While BPDN uses nonsmooth regularizers (including the $\ell_1$ norm, nuclear norm, and elastic net),  
the inequality constraint is ubiquitously smooth, and often taken to be the $\ell_2$ norm as in~\eqref{eq:basicBPDN}. 
Prior work, including~\cite{spgl1, fastlowrank, splittingschemes,aravkin2018level}, exploits the smoothness of the inequality constraint in developing 
algorithms for the problem class.  
Smooth constraints work well when errors are Gaussian, but this assumption fails for seismic data and 
is often violated in general. 

\noindent
{\bf Contributions.}
The main contribution of this paper is to provide a fast, easily adaptable algorithm to solve non-smooth and nonconvex data constraints in general level-set 
formulations including BPDN, 
and illustrate the efficacy of the approach using large-scale interpolation and denoising problems. 
To do this, we extend the universal regularization framework of~\cite{zheng2018sr3} to level-set
 formulations with nonsmooth/nonconvex constraints. 
We develop a convergence theory for the optimization approach, and 
illustrate the practical performance of the new formulations for data interpolation and denoising in both sparse recovery and low-rank matrix factorization. 

\noindent
{\bf Roadmap.}
The paper proceeds as follows. 
Section \ref{scn:relax} develops the general relaxation framework and approach. Section \ref{scn:bpdn} specifies this framework to the BPDN setting with nonsmooth, nonconvex 
constraints. 
In Section \ref{scn:bpdn_test} we apply the approach to sparse signal recovery problem and sparse Curvelet reconstruction. 
In Section \ref{scn:lr}, we extend the approach to a low-rank interpolation framework, which embeds matrix factorization within the BPDN constraint. 
In Section \ref{scn:lr_test} we test the low-rank extension using synthetic examples and data extracted from a full 5D dataset simulated on complex SEG/EAGE overthrust model. 

%

\section{Nonsmooth, nonconvex level-set formulations.}\label{scn:relax}

We consider the following problem class:
\begin{equation}
\label{eq:flippy}
\min_x \phi(\cC(x)) \quad \mbox{s.t.} \quad \psi(\cA(x)-b) \leq \sigma,
\end{equation}
where $\phi$ and $\psi$ may be nonsmooth, nonconvex, but  have well-defined proximity and projection operators:
\begin{equation}
\label{eq:prox}
\begin{aligned}
\prox_{\alpha \phi}(y) &= \argmin_{x} \frac{1}{2\alpha} \| x - y \|^2 + \phi(x)\\
\proj_{\psi(\cdot) \leq \sigma} & = \argmin_{\psi(x) \leq \sigma} \frac{1}{2\alpha} \| x - y \|^2.
\end{aligned}
\end{equation}
Here, $\cC:\mathbb{C}^{m\times n}\rightarrow \R^{c}$ is typically a linear operator that converts $x$ 
to some transform domain, while $\cA:\mathbb{C}^{m\times n}\rightarrow \R^{d}$ is a linear observation operator also acting on $x$. 
In the context of interpolation, $\cA$ is often a restriction operator.

This setting significantly extends that of~\cite{aravkin2018level}, who assume $\psi$ and $\phi$ are convex, $\cC = I$, and use the {\it value function}
\[
v(\tau) = \min_{x}  \psi(\cA(x)-b) \quad \mbox{s.t.} \quad  \phi(x) \leq \tau
\]
to solve~\eqref{eq:flippy} using root-finding to solve 
\(
v(\tau) = \sigma.
\)
Variational properties of $v$ are fully only understood in the convex setting, 
and efficient evaluation of $v(\tau)$ requires $\psi$ to be smooth, 
so that efficient first-order methods are applicable.  

Here, we develop an approach to solve any problem of type~\eqref{eq:flippy}, 
including problems with nonsmooth and nonconvex $\psi, \phi$,
 using only matrix vector products with $\cA, \cA^T, \cC, \cC^T$ and simple nonlinear operators. 
 In special cases, the approach can also use equation solves to gain significant speedup. 
  

\begin{algorithm}[t!]
\caption{Prox-gradient for~\eqref{eq:cost_1}.}
\label{alg:prox-grad}
\begin{algorithmic}[1]
\State{\bfseries Input:} $x^0, w_1^0, w_2^0$
\State{Initialize: $k=0$}
\While{not converged}
\Let{$x^{k+1}$}{$\begin{aligned} x^k - \alpha &\left(\frac{1}{\eta_1} \cC^T(\cC(x)-w_1) \right. \\ &\left.+ \frac{1}{\eta_2} \cA^T(\cA(x)-w_2 -b)\right)\end{aligned} $}
\Let{$w_{1}^{k+1}$}{$\prox_{\frac{\eta_1}{\alpha} \phi}\left(w_1^k - \frac{\alpha}{\eta_1} (w_1^k - \cC(x^{k+1}))\right)$}
\Let{$w_{2}^{k+1}$}{$\proj_{\sigma\mathbb{B}_{\psi}}\left(w_2^k - \frac{\alpha}{\eta_2}(w_2 - (\cA(x^{k+1}) - b)\right)$}
\Let{$k$}{$k+1$}
\EndWhile
\State{\bfseries Output:} $w_1^k, w_2^k, x^k$
\end{algorithmic}
\end{algorithm}

The general approach uses the relaxation formulation proposed in \cite{zheng2018sr3, fastnonsmooth}. 
We use relaxation to split $\phi, \psi$ from the linear map $\cA$ and transformation map $\cC$,  
extending~\eqref{eq:flippy} to 
\begin{equation}
\begin{aligned}
	\min_{x,w_1, w_2} & \phi(w_1) +\frac{1}{2\eta_1} \|\cC(x)-w_1\|^2 + \frac{1}{2\eta_2}\|w_2 - \cA(x)+b\|_2^2 \\
	 \mbox{s.t.}& \quad \psi(w_2)\leq \sigma.
	\label{eq:cost_1}
	\end{aligned}
\end{equation}
with $w_1 \in \mathbb{R}^{c}$ and $w_2 \in \mathbb{R}^{d}$.
In contrast to~\cite{zheng2018sr3}, we use a continuation scheme to force $\eta_i \rightarrow 0$, in order to solve the original formulation~\eqref{eq:flippy}. 
Thus the only external algorithmic parameter the scheme requires is $\sigma$, which controls the error budget for $\psi$.

There are two algorithms readily available to solve~\eqref{eq:cost_1}. The first is prox-gradient descent, detailed in Algorithm~\ref{alg:prox-grad}. 
We let $z = (x, w_1, w_2)$, and define 
\[
\Phi(z) = \phi(w_1) + \delta_{\psi(\cdot) \leq \sigma} (w_2),
\]
where the {\it indicator function} $\delta_{\psi(\cdot) \leq \sigma}$ takes the value $0$ if $\psi(w_2) \leq \sigma$, and infinity otherwise. 
Problem~\eqref{eq:cost_1} can now be written as 
\begin{equation}
\label{eq:lsNC}
\min_z  
\underbrace{\frac{1}{2}\left\| \begin{bmatrix} \frac{1}{\sqrt{\eta_1}}  \cC & - \frac{1}{\sqrt{\eta_1}} I & 0 \\
\frac{1}{\sqrt{\eta_2}} \cA & 0 & -\frac{1}{\sqrt{\eta_2}} I\end{bmatrix} z - \begin{bmatrix} 0\\ b \\ 0\end{bmatrix} \right\|^2}_{f(z)} + \Phi(z).
\end{equation}
Applying the prox-gradient descent iteration with step-size $\alpha$ 
\begin{equation}
\label{eq:pg}
z^{k+1} = \prox_{\alpha \Phi} (z^k - \alpha \nabla f(z^k))
\end{equation}
gives the coordinate updates in Algorithm~\ref{alg:prox-grad}. 

Prox-gradient has been analyzed in the general nonconvex setting by~\cite{attouch2010proximal}. 
However, Problem~\eqref{eq:lsNC} is the sum of a convex quadratic and a nonconvex regularizer. 
The rate of convergence for this problem class can be quantified, and  
\cite[Theorem 2]{zheng2018sr3}, reproduced below, will be very useful here. 

\begin{theorem}[Prox-gradient for Regularized Least Squares]
\label{thm:pg}
Consider the least squares objective 
\[
\min_z p(z) := \frac{1}{2} \|Az - a\|^2 + \Phi(z).
\]
with $p$ bounded below, and $\Phi$ potentially nonsmooth, nonconvex, and non-finite valued. 
With step $\alpha = \frac{1}{\sigma_{\max}}$, the iterates~\eqref{eq:pg} satisfy 
\[
\min_{k = 0, \dots, N} \|v_{k+1}\|^2 \leq \frac{\|A\|^2}{N} (p(z_0) - \inf p)
\]
where 

\[
v_k = (\|A\|_2^2 I - A^TA)(x_k - x_{k+1})
\]
is a subgradient (generalized gradient) of $p$ at $z^k$.
\end{theorem}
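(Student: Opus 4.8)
The plan is to carry out the standard majorization--minimization analysis of prox-gradient, but to supply the quantitative content through the exact quadratic structure of $f(z)=\tfrac12\|Az-a\|^2$ rather than through a convex inequality for $\Phi$, which is unavailable here. I would assemble three pieces: a sufficient-decrease inequality, an explicit formula for a subgradient of $p$ at the new iterate in terms of the step, and a telescoping argument. Throughout I identify $z_k=x_k$ and write $M=\|A\|^2 I-A^TA$, noting that with $1/\alpha=\sigma_{\max}=\|A\|^2$ (the top eigenvalue of $A^TA$) we have $0\preceq M\preceq\|A\|^2 I$.

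First I would observe that the update $z_{k+1}=\prox_{\alpha\Phi}(z_k-\alpha\nabla f(z_k))$ is exactly the minimizer of the surrogate
\[
Q_k(z)=f(z_k)+\langle\nabla f(z_k),\,z-z_k\rangle+\tfrac{1}{2\alpha}\|z-z_k\|^2+\Phi(z).
\]
Since $f$ is quadratic with Hessian $A^TA$ and $\tfrac1\alpha=\|A\|^2\geq\|A^TA\|$, the exact Taylor expansion of $f$ gives $Q_k(z)-p(z)=\tfrac12\|z-z_k\|_M^2\geq0$, so $Q_k$ majorizes $p$ and agrees with it at $z_k$. Combining $Q_k(z_{k+1})\leq Q_k(z_k)=p(z_k)$ with this identity and the exact expansion of $f$ at $z_{k+1}$ yields the sufficient-decrease estimate
\[
p(z_k)-p(z_{k+1})\;\geq\;\tfrac12\|z_k-z_{k+1}\|_M^2,
\]
which uses only the defining minimization property of the prox step and nothing about the curvature of $\Phi$.

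Next I would read off the first-order optimality condition of the surrogate, $0\in\nabla f(z_k)+\tfrac1\alpha(z_{k+1}-z_k)+\partial\Phi(z_{k+1})$, and apply the limiting-subdifferential sum rule for the smooth-plus-nonsmooth function $p$ to verify that
\[
v_{k+1}:=\nabla f(z_{k+1})-\nabla f(z_k)-\tfrac1\alpha(z_{k+1}-z_k)=M(z_k-z_{k+1})\in\partial p(z_{k+1}),
\]
which is exactly the claimed generalized gradient. To control it by the decrease I would use the matrix inequality $M^2\preceq\|A\|^2 M$ (valid since the eigenvalues of $M$ lie in $[0,\|A\|^2]$), giving $\|v_{k+1}\|^2=\langle M^2 u,u\rangle\leq\|A\|^2\|u\|_M^2$ with $u=z_k-z_{k+1}$, and hence $\|v_{k+1}\|^2$ bounded by a constant multiple of $p(z_k)-p(z_{k+1})$. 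Summing over $k=0,\dots,N-1$ telescopes the right-hand side to $p(z_0)-p(z_N)\leq p(z_0)-\inf p$, which is finite because $p$ is bounded below; bounding the minimum term by the average then delivers the stated rate.

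The main obstacle is the nonconvexity of $\Phi$: one cannot lower-bound $\Phi$ by a supporting hyperplane, so the usual convex descent lemma is unavailable. The resolution, and the crux of the argument, is that the sufficient-decrease step needs nothing from $\Phi$ beyond $Q_k(z_{k+1})\leq Q_k(z_k)$, with all quantitative content carried by the exact quadratic expansion of $f$ and the choice $\alpha=1/\sigma_{\max}$. That choice simultaneously makes $M$ positive semidefinite, drives the decrease through $\|\cdot\|_M$, and bounds the subgradient norm through $M^2\preceq\|A\|^2 M$. A secondary point requiring care is that $v_{k+1}$ is a genuine (limiting) subgradient of $p$, for which I would invoke the sum rule for $f+\Phi$ with $f$ smooth.
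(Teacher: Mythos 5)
The paper does not actually prove this theorem; it reproduces the statement from \cite[Theorem 2]{zheng2018sr3}, so there is no internal proof to compare against. Judged on its own merits, your argument is the standard (and almost certainly the intended) one, and it is essentially correct: the exact quadratic expansion of $f$ turns the majorization gap into the seminorm $\tfrac12\|z-z_k\|_M^2$ with $M=\|A\|^2I-A^TA\succeq0$, the Fermat rule for the prox subproblem plus the sum rule for a smooth-plus-lsc function identifies $v_{k+1}=M(z_k-z_{k+1})\in\partial p(z_{k+1})$, and $M^2\preceq\|A\|^2M$ converts the subgradient norm into the per-step decrease; all three steps check out, and you correctly identify that no convexity of $\Phi$ is used beyond $Q_k(z_{k+1})\le Q_k(z_k)$. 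The one point worth flagging is the constant: your chain gives $\|v_{k+1}\|^2\le\|A\|^2\|u\|_M^2\le 2\|A\|^2\bigl(p(z_k)-p(z_{k+1})\bigr)$, which telescopes to $\min_{0\le k\le N}\|v_{k+1}\|^2\le \tfrac{2\|A\|^2}{N+1}\bigl(p(z_0)-\inf p\bigr)$, not the stated $\tfrac{\|A\|^2}{N}$; the factor of $2$ cannot be removed by this route, since without convexity of $\Phi$ the sufficient decrease $\tfrac12\|u\|_M^2$ cannot be upgraded to $\|u\|_M^2$. This is a discrepancy with the reproduced statement (whose indexing of $v_k$ versus $v_{k+1}$ and ``at $z^k$'' is itself loose), not a gap in your reasoning, but you should state the bound you can actually prove rather than silently matching the quoted constant.
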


We can specialize Theorem~\ref{thm:pg} to our case by computing the norm of the least squares system in~\eqref{eq:lsNC}.

\begin{corollary}[Rate for Algorithm~\ref{alg:prox-grad}]
\label{cor:proxgrad}
Theorem~\ref{thm:pg} applied to problem~\ref{eq:cost_1} gives 
\[
\min_{k = 0, \dots, N} \|v_{k+1}\|^2 \leq C(\eta_1, \eta_2, \cC, \cA) \frac{1}{N} (p(z_0) - \inf p)
\]
with 
\[
C(\eta_1, \eta_2, \cC, \cA) = \frac{1}{\eta_1} (c+\|\cC\|_F^2) + \frac{1}{\eta_2}(d + \|\cA\|_F^2).
\]
\end{corollary}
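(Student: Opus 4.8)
The plan is to apply Theorem~\ref{thm:pg} verbatim to~\eqref{eq:lsNC}, with
\[
A = \begin{bmatrix} \frac{1}{\sqrt{\eta_1}} \cC & -\frac{1}{\sqrt{\eta_1}} I & 0 \\ \frac{1}{\sqrt{\eta_2}} \cA & 0 & -\frac{1}{\sqrt{\eta_2}} I \end{bmatrix}, \quad a = \begin{bmatrix} 0 \\ b \\ 0 \end{bmatrix}, \quad \Phi(z) = \phi(w_1) + \delta_{\psi(\cdot)\leq\sigma}(w_2),
\]
and $p = f + \Phi$. Theorem~\ref{thm:pg} then yields exactly the claimed $1/N$ rate, but with the factor $\|A\|_2^2$ in place of $C$. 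The only remaining task is to produce a closed-form, computable upper bound on $\|A\|_2^2$. Since the exact spectral norm of this coupled block system has no convenient expression, I would replace it by the Frobenius norm via $\|A\|_2^2 \leq \|A\|_F^2$; this bound is explicit and, because the rate is monotone in the leading constant, enlarging $\|A\|_2^2$ to $\|A\|_F^2$ only weakens (hence preserves the validity of) the inequality.

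Next I would evaluate $\|A\|_F^2$ block by block, using that the squared Frobenius norm of a block matrix equals the sum of the squared Frobenius norms of its blocks. The two scaled linear maps contribute $\tfrac{1}{\eta_1}\|\cC\|_F^2$ and $\tfrac{1}{\eta_2}\|\cA\|_F^2$, and the zero blocks contribute nothing. The two identity blocks must be tracked carefully, since they act on different spaces ($w_1 \in \R^{c}$ and $w_2 \in \R^{d}$): they contribute $\tfrac{1}{\eta_1}\|I_c\|_F^2 = \tfrac{c}{\eta_1}$ and $\tfrac{1}{\eta_2}\|I_d\|_F^2 = \tfrac{d}{\eta_2}$. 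Summing gives
\[
\|A\|_2^2 \;\leq\; \|A\|_F^2 \;=\; \frac{1}{\eta_1}\!\left(c + \|\cC\|_F^2\right) + \frac{1}{\eta_2}\!\left(d + \|\cA\|_F^2\right) \;=\; C(\eta_1, \eta_2, \cC, \cA),
\]
which is precisely the stated constant.

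There is no deep obstacle here: the argument is a direct substitution into Theorem~\ref{thm:pg} followed by an elementary Frobenius computation. The two points that require care are (i) correctly accounting for the dimensions of the two identity blocks, which is exactly what produces the additive terms $c$ and $d$, and (ii) consistency of the step size. Theorem~\ref{thm:pg} prescribes $\alpha = 1/\|A\|_2^2$, whereas we only control the computable quantity $\|A\|_F^2 \geq \|A\|_2^2$; in practice I would therefore run Algorithm~\ref{alg:prox-grad} with the conservative step $\alpha = 1/\|A\|_F^2 \leq 1/\|A\|_2^2$, which remains a valid (sufficiently small) step for which the descent inequality underlying Theorem~\ref{thm:pg} still holds, so the rate carries over with $C$ replacing $\|A\|_2^2$. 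The practical payoff, which I would emphasize, is that $C$ depends only on the Frobenius norms of $\cC$ and $\cA$ and on the relaxation parameters $\eta_1, \eta_2$, all known without any eigenvalue computation, and that it transparently exhibits the $1/\eta_i$ inflation of the constant as the continuation drives $\eta_i \to 0$.
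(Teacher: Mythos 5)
Your proof is correct and follows essentially the same route as the paper, which states the corollary with no explicit proof beyond the remark that one ``computes the norm of the least squares system'' in~\eqref{eq:lsNC}: the stated constant $C$ is exactly $\|A\|_F^2$ evaluated block by block as you do, with the identity blocks contributing the additive $c$ and $d$. Your explicit treatment of the gap between $\|A\|_2^2$ and $\|A\|_F^2$, and of the correspondingly conservative step size $\alpha = 1/\|A\|_F^2$, is more careful than the paper's implicit argument and is the right way to make the statement rigorous.
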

\begin{algorithm}[t!]
\caption{Value-function optimization for~\eqref{eq:cost_1}.}
\label{alg:varpro}
\begin{algorithmic}[1]
\State{\bfseries Input:} $x^0, w_1^0, w_2^0$
\State{Initialize: $k=0$}
\State{Define: $\cH = \frac{1}{\eta_1} \cC^T \cC + \frac{1}{\eta_2}\cA^T \cA$}
\While{not converged}
\Let{$x^{k+1}$}{$\cH^{-1}\left(\frac{1}{\eta_1}\cC^Tw_1^k + \frac{1}{\eta_2}\cA^T(b+w_2^k)\right)$}
\Let{$w_{1}^{k+1}$}{$\prox_{\frac{\eta_1}{\beta} \phi}\left(w_1^k - \frac{\beta}{\eta_1} (w_1^k - \cC(x^{k+1}))\right)$}
\Let{$w_{2}^{k+1}$}{$\proj_{\sigma\mathbb{B}_{\psi}}\left(w_2^k - \frac{\beta}{\eta_2}(w_2 - (\cA(x^{k+1}) - b)\right)$}
\Let{$k$}{$k+1$}
\EndWhile
\State{\bfseries Output:} $w_1^k, w_2^k, x^k$
\end{algorithmic}
\end{algorithm}

Problem~\eqref{eq:cost_1} also admits a different optimization strategy, 
summarized in Algorithm~\ref{alg:varpro}.
We can formally minimize the objective in $x$ directly via the gradient, with the minimizer given by
\[
\begin{aligned}
x(w) &= \cH^{-1}\left( \frac{1}{\eta_1} \cC^T w_1 + \frac{1}{\eta_2} \cA^T (w_2+b)\right)\\
\cH & = \frac{1}{\eta_1} \cC^T \cC +  \frac{1}{\eta_2}\cA^T \cA
\end{aligned}
\]
with $w = (w_1, w_2)$. Plugging this expression back in gives a regularized least squares problem in 
$w$ alone:
\begin{equation}
\label{eq:value}
\begin{aligned}
\min_{w_1, w_2}  &p(w) :=  \phi(w_1) +\left\| \cF \begin{bmatrix}w_1\\ w_2 \end{bmatrix}-\tilde b\right\|^2  \quad \mbox{s.t.} \quad \psi(w_2)\leq \sigma \\
& \cF = \begin{bmatrix} \frac{1}{\sqrt{\eta_1}} \left(\frac{1}{\eta_1} \cC\cH^{-1}\cC^T - I \right)  & \frac{1}{\sqrt{\eta_1}\eta_2} \cC\cH^{-1}\cA^T \\
\frac{-1}{\sqrt{\eta_2}\eta_1} \cA\cH^{-1}\cC^T &  \frac{1}{\sqrt{\eta_2}} \left(I - \frac{1}{\eta_1} \cA\cH^{-1}\cA^T\right)\end{bmatrix}	\\
& \widetilde b = \begin{bmatrix} \frac{-1}{\sqrt{\eta_1}\eta_2} \cC\cH^{-1}\cA^Tb  \\
\frac{1}{\sqrt{\eta_2}} \left(\frac{1}{\eta_1}\cA\cH^{-1}\cA^T - I \right)b\end{bmatrix}.
\end{aligned}
\end{equation}
Prox-gradient applied to the {\it value function} $p(w)$ in~\eqref{eq:value} with step $\beta$ gives the iteration 
\begin{equation}
\label{eq:pgv}
w^+ = \prox_{\frac1\beta \Phi} (w^k - \beta \cF^T(\cF w - \widetilde b))
\end{equation}
This iteration, as formally written, requires forming and applying the system $\cF$ in~\eqref{eq:value} at each iteration. 
In practice we compute the $x(w)$ update on the fly, as detailed in Algorithm~\ref{alg:varpro}. 
The equivalence of Algorithm~\ref{alg:varpro} to iteration~\eqref{eq:pgv} comes from the following derivative formula for value functions~\cite{bell2008algorithmic}: 
\[
\begin{aligned}
 \cF^T(\cF w - \widetilde b)) &= \frac{1}{\eta_1} \cC^T(\cC(x(w))-w_1) \\
 & + \frac{1}{\eta_2}\cA^T(\cA(x(w)) - (w_2 +b)). 
\end{aligned}
\]
In order to compute $\beta$, and apply Theorem~\ref{thm:pg}, we first prove the following lemma: 
\begin{lemma}[Bound on $\|\cF^T\cF\|_2$]

The operator norm $\|\cF^T\cF\|_2$ is bounded above by $\max \left(\frac{1}{\eta_1}, \frac{1}{\eta_2}\right)$.

\end{lemma}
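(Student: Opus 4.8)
The plan is to avoid expanding $\cF^T\cF$ from the block entries in~\eqref{eq:value} — which would entangle $\cH^{-1}$ in Woodbury- and Schur-type manipulations — and instead to read off the structure of $\cF$ from the way it was produced, namely by partially minimizing the least-squares term $f(z)=\frac12\|Mz-a\|^2$ of~\eqref{eq:lsNC} over the block $x$. Writing the system matrix as $M=[\,M_x\ \ M_w\,]$ with
\[
M_x=\begin{bmatrix}\frac{1}{\sqrt{\eta_1}}\cC\\ \frac{1}{\sqrt{\eta_2}}\cA\end{bmatrix},\qquad
M_w=\begin{bmatrix}-\frac{1}{\sqrt{\eta_1}}I & 0\\ 0 & -\frac{1}{\sqrt{\eta_2}}I\end{bmatrix},
\]
the inner normal equations give $M_x^TM_x=\cH$ and $x(w)=\cH^{-1}M_x^T(a-M_ww)$; substituting this back leaves the residual $-(I-M_x\cH^{-1}M_x^T)(a-M_ww)$, so the value-function operator is exactly $\cF=P^\perp M_w$ with $P^\perp:=I-M_x\cH^{-1}M_x^T$. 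A one-line block expansion confirms that $P^\perp M_w$ reproduces the matrix displayed in~\eqref{eq:value}.

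The key point is then purely structural. Because $\cH=M_x^TM_x$ is invertible (it is inverted in Algorithm~\ref{alg:varpro}), $P^\perp$ is the orthogonal projector onto $\mathrm{range}(M_x)^\perp$: one checks $(P^\perp)^2=P^\perp$ and $(P^\perp)^T=P^\perp$ directly from $M_x^TM_x=\cH$. Hence $0\preceq P^\perp\preceq I$, and projectors are nonexpansive, so for every $w$
\[
\|\cF w\|^2=\|P^\perp M_w w\|^2\le\|M_w w\|^2=w^T M_w^T M_w\, w .
\]
Equivalently, $\cF^T\cF=M_w^T P^\perp M_w\preceq M_w^T M_w$ in the positive-semidefinite order.

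To finish I would simply evaluate the dominating operator. Since $M_w$ is block diagonal,
\[
M_w^T M_w=\begin{bmatrix}\frac{1}{\eta_1}I & 0\\ 0 & \frac{1}{\eta_2}I\end{bmatrix},
\qquad \|M_w^T M_w\|_2=\max\!\left(\frac{1}{\eta_1},\frac{1}{\eta_2}\right),
\]
and combining this with the comparison above yields
\[
\|\cF^T\cF\|_2=\sup_{\|w\|=1} w^T M_w^T P^\perp M_w\, w
\le\sup_{\|w\|=1} w^T M_w^T M_w\, w
=\max\!\left(\frac{1}{\eta_1},\frac{1}{\eta_2}\right),
\]
which is the asserted bound.

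The one genuine obstacle is the first step: seeing that eliminating $x$ collapses $\cF$ into an orthogonal projector composed with the trivial diagonal-block map $M_w$. Once that is recognized, no computation with $\cH^{-1}$ is needed — the estimate follows from nonexpansiveness of projections. The alternative of verifying the bound directly from the block form of $\cF$ is routine but bookkeeping-heavy, and I would confine any such algebraic check to a remark rather than let it carry the proof.
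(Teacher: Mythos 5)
Your proof is correct, and it takes a genuinely different route from the paper's. The paper never exposes the projector structure: it instead views $\min_x Q(w,x)$ as a value function $\widetilde Q(w)=\widetilde q(Dw)$ with $D$ a diagonal scaling, invokes Theorem~1 of the cited reference \cite{fastnonsmooth} to conclude that $\nabla\widetilde q$ is $1$-Lipschitz, and then pushes the Lipschitz constant through the chain rule to get $\mathrm{lip}(\nabla\widetilde Q)\le\|D^TD\|_2=\max(1/\eta_1,1/\eta_2)$, identifying this Lipschitz constant with $\|\cF^T\cF\|_2$. Your argument reaches the same bound by pure linear algebra: partial minimization over $x$ factors the value-function operator as $\cF=P^{\perp}M_w$ with $P^{\perp}=I-M_x\cH^{-1}M_x^T$ the orthogonal projector onto $\mathrm{range}(M_x)^{\perp}$, whence $\cF^T\cF=M_w^TP^{\perp}M_w\preceq M_w^TM_w=\mathrm{diag}\bigl(\tfrac{1}{\eta_1}I,\tfrac{1}{\eta_2}I\bigr)$. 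What your route buys is self-containedness and transparency --- no external theorem, no differentiability discussion, and the constants are unambiguous (the paper's intermediate objects $q(z,x)$ and $D$ as printed carry what appear to be typos in the $\tfrac{1}{2\sqrt{\eta_i}}$ factors and in the second diagonal entry of $D$, which your derivation sidesteps entirely; the displayed block form of $\cF$ in~\eqref{eq:value} also matches $P^{\perp}M_w$ only up to a sign on the second block row and an $\eta_1$-versus-$\eta_2$ index, neither of which affects $\cF^T\cF$). What the paper's route buys is generality: the value-function Lipschitz lemma it invokes applies beyond the purely quadratic-in-$x$ setting, whereas your projector identity relies on the inner minimization being exactly least squares with $\cH$ invertible --- a hypothesis you correctly flag and which the algorithm already assumes. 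Either argument is acceptable; yours is arguably the cleaner proof of this particular statement.
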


\begin{proof}
Considering the function 
\[
\|\cF w- \widetilde b\|^2 = \min_x \underbrace{\frac{1}{2\eta_1} \|\cC(x)-w_1\|^2 + \frac{1}{2\eta_2}\|w_2 - \cA(x)+b\|_2^2}_{Q(x,w)},
\]
we know that the gradient is given by $\cF^T(\cF w - \widetilde b)$, and any Lipschitz bound $L$ 
gives 
\[
\|\cF^T \cF w_1 - \cF^T \cF w_2 \| \leq L \|w_1 - w_2\|,
\]
which means $\|\cF^T \cF\|_2 \leq L$. On the other hand, we can write the right hand side as 
\[
Q(w,x) = q(Dw,x)
\]
where 
\[
q(z,x) = \frac{1}{2}   \left\| z  -  \begin{bmatrix} \frac{1}{2\sqrt{\eta_1}} \cC(x) \\   \frac{1}{2\sqrt{\eta_2}} \cA(x)  \end{bmatrix} - \begin{bmatrix} 0 \\ b\end{bmatrix} \right\|^2
\]
and 
\[
D = \begin{bmatrix}  \frac{1}{\sqrt{\eta_1}} & 0 \\ 0 & \frac{1}{\sqrt{\eta_1}} \end{bmatrix}.
\]
Using Theorem 1 of \cite{fastnonsmooth} with $g(z) = 0$, we have  that the value function
\[
\widetilde q(z) = \min_x q(z,x)
\]
is differentiable, with $\mbox{lip}(\nabla \widetilde q) \leq 1$. Therefore 
\[
\widetilde Q(w) =  \min_x Q(w, x)
\]
is also differentiable, with 
\[
\nabla \widetilde Q(w) = D' \nabla \widetilde q(Dw),
\]
and hence 
\[
\mbox{lip} (\nabla \widetilde Q) \leq \|D^TD\|_2 = \max \left(\frac{1}{\eta_1}, \frac{1}{\eta_2}\right).
\]
This immediately gives the result. 
\end{proof}

Now we can combine iteration~\eqref{eq:pgv} with Theorem~\ref{thm:pg} to get a rate of convergence 
for Algorithm~\ref{alg:varpro}.

\begin{corollary}[Convergence of Algorithm~\ref{alg:varpro}]
\label{cor:conv}
When $\beta$ satisfies 
\[
\beta \leq \min(\eta_1, \eta_2),
\]
the iterates of Algorithm~\ref{alg:varpro} satisfy
\[
\min_{k = 0, \dots, N} \|v_{k+1}\|^2 \leq \frac{1}{N} \max \left(\frac{1}{\eta_1}, \frac{1}{\eta_2}\right)(p(w_0) - \inf p))
\]
where $v_k$ is in the subdifferential (generalized gradient) of objective~\eqref{eq:value} at $w^k$.
Moreover, if $\eta_1 = \eta_2$, then Algorithm~\eqref{alg:varpro} is equivalent to block-coordinate descent, 
as detailed in Algorithm~\ref{alg:bcd}. 

\end{corollary}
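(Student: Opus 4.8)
The plan is to obtain the rate as a direct specialization of Theorem~\ref{thm:pg} to the value-function least squares~\eqref{eq:value}, and to obtain the block-coordinate descent equivalence by a separate algebraic collapse of the coordinate updates at the extreme step size.

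First I would match problem~\eqref{eq:value} to the template of Theorem~\ref{thm:pg} by setting $A = \cF$, $a = \widetilde b$, and $\Phi(w) = \phi(w_1) + \delta_{\psi(\cdot)\leq\sigma}(w_2)$, so that $p(w)$ is a convex quadratic plus a (possibly nonsmooth, nonconvex) regularizer, exactly the required form. The preceding Lemma supplies $\|\cF^T\cF\|_2 = \|\cF\|_2^2 \leq \max(1/\eta_1, 1/\eta_2)$. I would then invoke the equivalence, established above through the value-function derivative formula of~\cite{bell2008algorithmic}, between Algorithm~\ref{alg:varpro} run with step $\beta$ and the formal prox-gradient iteration~\eqref{eq:pgv} on $p(w)$. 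Applying Theorem~\ref{thm:pg} with $A = \cF$ yields the bound with constant $\|\cF\|_2^2$; replacing this by its upper bound via the Lemma gives
\[
\min_{k=0,\dots,N}\|v_{k+1}\|^2 \leq \frac{1}{N}\max\left(\frac{1}{\eta_1},\frac{1}{\eta_2}\right)(p(w_0) - \inf p),
\]
where $v_k = (\|\cF\|_2^2 I - \cF^T\cF)(w_k - w_{k+1})$ is the generalized gradient furnished by the theorem. The step condition $\beta \leq \min(\eta_1,\eta_2) \leq 1/\|\cF\|_2^2$ guarantees that $\beta$ does not exceed the reciprocal Lipschitz constant, so it is an admissible prox-gradient step; this proves the first claim.

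For the block-coordinate descent statement I would set $\eta_1 = \eta_2 = \eta$ and take $\beta = \eta = \min(\eta_1,\eta_2)$, the boundary of the admissible step range, and show that each coordinate update in Algorithm~\ref{alg:varpro} collapses to an exact partial minimizer of~\eqref{eq:cost_1}. The $x$-update is already the exact minimizer of the quadratic in $x$. In both the $w_1$- and $w_2$-updates the prefactor $\beta/\eta = 1$ makes the linearization term telescope; for instance $w_2^k - (w_2^k - (\cA(x^{k+1})-b)) = \cA(x^{k+1}) - b$, so the $w_2$-update becomes $\proj_{\sigma\mathbb{B}_\psi}(\cA(x^{k+1}) - b)$, which is exactly the constrained minimizer of the $w_2$-block, and the $w_1$-update similarly reduces to a prox of $\phi$ evaluated at $\cC(x^{k+1})$. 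Since $\phi(w_1)$ and $\delta_{\psi(\cdot)\leq\sigma}(w_2)$ separate over the two blocks, these updates jointly minimize~\eqref{eq:cost_1} over $(w_1,w_2)$ with $x$ held fixed; alternating this with the exact $x$-minimization is precisely the block-coordinate descent sweep of Algorithm~\ref{alg:bcd}.

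The main obstacle is the constant bookkeeping in the second part: verifying that, at $\beta = \eta$, the single linearized prox-gradient step on the $w_1$ block really coincides with the exact block minimizer. This requires the gradient-step term to cancel the stale iterate $w_1^k$ and the effective prox parameter $\eta_1/\beta$ to collapse to the relaxation scale so that the prox is taken against the correct quadratic weight; tracking these constants precisely is where I would be most careful. The first part, by contrast, is essentially a substitution once the Lemma and the iteration equivalence~\eqref{eq:pgv} are in hand, the only minor point being to reconcile the exact Lipschitz step of Theorem~\ref{thm:pg} with the conservative upper bound $\max(1/\eta_1,1/\eta_2)$ actually used.
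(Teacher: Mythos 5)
Your proposal is correct and follows essentially the same route as the paper's (very terse) proof: the rate is obtained by plugging iteration~\eqref{eq:pgv} into Theorem~\ref{thm:pg} with the Lemma's bound $\|\cF^T\cF\|_2 \leq \max(1/\eta_1,1/\eta_2)$, and the equivalence to Algorithm~\ref{alg:bcd} is obtained by substituting $\beta = \eta_1 = \eta_2$ into each line of Algorithm~\ref{alg:varpro}. You simply spell out the bookkeeping (the cancellation $w^k - (w^k - \cdot)$ and the collapse of the prox parameter) that the paper leaves implicit.
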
 

\begin{proof}
The convergence statement comes directly from plugging the estimate of iteration~\ref{eq:pgv} into Theorem~\ref{thm:pg}.
The equivalence of Algorithm~\ref{alg:bcd} with Algorithm~\ref{alg:varpro} is obtained by plugging in step size 
$\beta = \eta_1 = \eta_2$ into each line of Algorithm~\ref{alg:varpro}.

\end{proof}

An important consequence of Corollary~\ref{cor:conv} is that the convergence rate of Algorithm~\ref{alg:varpro} 
does not depend on $\cC$ or $\cA$, in contrast to Algorithm~\ref{alg:prox-grad}, whose rate depends on 
both matrices (Corollary~\ref{cor:proxgrad}). The rates of both algorithms are affected by $(\eta_1, \eta_2)$. 
We use continuation in $\eta$, driving $(\eta_1, \eta_2)$ to $(0,0)$ at the same rate, and warm-starting each 
problem at the previous solution. A convergence theory that takes this continuation into account is left to future work.

\begin{algorithm}[t!]
\caption{Block-coordinate descent for~\eqref{eq:cost_1}.}
\label{alg:bcd}
\begin{algorithmic}[1]
\State{\bfseries Input:} $x^0, w_1^0, w_2^0$
\State{Initialize: $k=0$}
\State{Define: $\cH = \frac{1}{\eta_1} \cC^T \cC + \frac{1}{\eta_2}\cA^T \cA$}
\While{not converged}
\Let{$x^{k+1}$}{$\cH^{-1}\left(\frac{1}{\eta_1}\cC^Tw_1^k + \frac{1}{\eta_2}\cA^T(b+w_2^k)\right)$}
\Let{$w_{1}^{k+1}$}{$\prox_{\phi}\left(\cC(x^{k+1})\right)$}
\Let{$w_{2}^{k+1}$}{$\proj_{\sigma\mathbb{B}_{\psi}}\left(\cA(x^{k+1}) - b)\right)$}
\Let{$k$}{$k+1$}
\EndWhile
\State{\bfseries Output:} $w_1^k, w_2^k, x^k$
\end{algorithmic}
\end{algorithm}

  \begin{table}[H]
  \begin{center}
      \caption{SNR values agains the true $x$ for different $\ell_p$ norms with Algorithm \ref{alg:bcd}.}
    \label{tab:small_ex}
    \begin{tabular}{|c|c|}
      \hline
      \multicolumn{2}{|c|}{BPDN with Random Linear Operator} \\
      \hline
      Method/Norm & SNR \\
      \hline
      $\ell_2$ with SPGL1 & 0.2007   \\
      $\ell_2$ with~Alg.\ref{alg:bcd} &   0.2032 \\\hline
      $\ell_1$ with~Alg.\ref{alg:bcd} & 33.7281   \\
      $\ell_\infty$ with~Alg.\ref{alg:bcd}& -0.6708    \\
      $\ell_0$ with~Alg.\ref{alg:bcd} &    45.0601 \\
      \hline
    \end{tabular}
    \end{center}
   \end{table}

\subsection{Inexact Least-Squares Solves.}

Algorithm~\ref{alg:bcd} has a provably faster rate of convergence than Algorithm~\ref{alg:prox-grad}. 
The practical performance of these algorithms is compared in Figure~\ref{fig:conv_rates}, 
which is solving a problem with both a $\ell_1$ norm regularizer and $\ell_1$ norm BPDN constraint, with $\alpha  = \|\cA\|_F^{-2}$, $\cC = I$, and $\eta_1 = \eta_2 = 10^{-4}$.
We see a huge performance difference in practice as well as in theory: the proximal gradient descent from Algorithm~\ref{alg:prox-grad} yields a slower cost function decay than solving exactly for $x(w)$ as in Algorithm~\ref{alg:bcd}. Indeed, Algorithm~\ref{alg:bcd} admits the fastest cost function decay as shown in Corollary~\ref{cor:conv}, albeit at the expense of more operations per iteration. This is due to the fact that fully solving the least squares problem in Line 5 is not tractable for large-scale problems. 
Hence, we implement Algorithm~\ref{alg:bcd} inexactly, 
using the Conjugate Gradient (CG) algorithm. 
Figure~\ref{fig:conv_rates} shows the results when we use 1, 5, and 20 CG iterations. 
Each CG iteration is implemented using matrix-vector products, and at 20 iterations 
the results are indistinguishable from those of Algorithm~\ref{alg:bcd} with full solves. 
Even at 5 iterations, the performance is remarkably close to that of 
of Algorithm~\ref{alg:bcd} with full solves. Algorithm~\ref{alg:bcd} 
has a natural warm-start strategy, with the $x$ from each previous iteration used in the subsequent LS solve using CG. 
Using a CG method with a bounded number of iterates gives fast convergence and saves computational time. 
This approach is used in the subsequent experiments.


\section{Application to Basis Pursuit De-noise Models}\label{scn:bpdn}
The Basis Pursuit De-noise problem can be formulated as
\begin{equation}
\label{eq:bpnd}
\begin{aligned}
	\min_x \|x\|_1 \quad \text{s.t.}\
	\rho\left(\cA(x) - b\right) \leq \sigma
\end{aligned}
\end{equation}
where $\rho(\cdot)$ is classically taken to be the $\ell_2$-norm. In this problem, 
$x$ represents unknown coefficients that are sparse in a transform domain,  
while $\mathcal{A}$ is a composition of the observation operator with a transform matrix; popular examples of transform domains include 
discrete cosine transforms, wavelets, and curvelets. 
The observed and noisy data $b$ resides in the temporal/spatial domain, and $\sigma$ is the misfit tolerance. 
This problem was famously solved with the SPGL1~\cite{spgl1} algorithm for $\rho(\cdot) = \|\cdot\|_2$. 
When the observed data is affected by large sparse noise, a smooth constraint is ineffective. 
A nonsmooth variant of~\eqref{eq:bpnd} is very difficult for approaches such as SPGL1, which solves subproblems of the form 
\[
\min_x \rho\left(\mathcal{A}(x) - b\right) \quad \mbox{s.t.} \quad \|x\|_1 \leq \tau.
\]
However, the proposed Algorithm ~\ref{alg:varpro} is easily adaptable to different norms. 
We apply Algorithm~\ref{alg:bcd} with $\phi(x) = \|x\|_1$, 
taking $(\eta_1, \eta_2) \rightarrow (0,0)$ so that  $(w_1, w_2)\rightarrow (x, \mathcal{A}(x) - b)$. 
We can take many different $\psi$, including $\ell_2, \ell_1, \ell_\infty$, and $\ell_0$. 



\begin{figure}[t!]
\centering     
\includegraphics[scale=0.13]{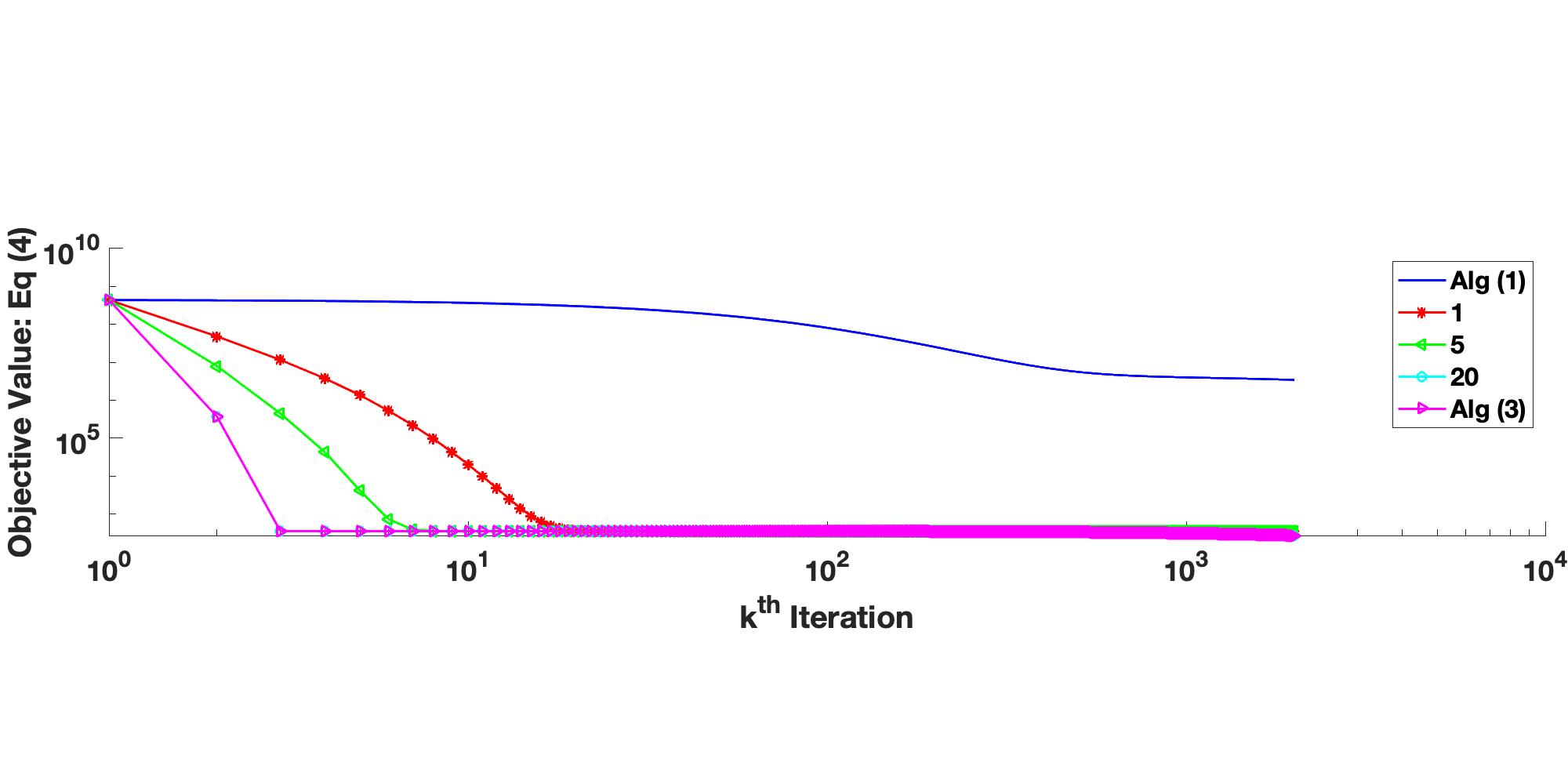}
\caption{Objective function decay for Equation \ref{eq:cost_1} with proximal-gradient descent (Algorithm \ref{alg:prox-grad}), Direct solving (Algorithm \ref{alg:bcd}), and several steps in between where we only partially solve for $\cH^{-1}(\hdots)$ with Algorithm \ref{alg:varpro}. }\label{fig:conv_rates}
\end{figure}

\begin{table*}[h]
\caption{\label{table:prox} Projectors for $\ell_p$ balls.}
\centering
\begin{tabular}{c | c | c | c }
Norm & $\ell(x)$ & $\proj_{\tau\mathbb{B}_\ell}(z)$ &  Solution \\ \hline\hline
$\ell_2$ & $ \sqrt{\sum_i x_i^2}$ & $\begin{cases} z, &\|z\| < \tau \\\tau z/\|z\|_2, & \|z\| > \tau \end{cases}$ & Analytic\\ \hline
$\ell_\infty$ & $\max_i |x_i|$ & $\max(\min(x,1),-1)$ & Analytic \\ \hline
$\ell_1$  & $\sum_i |x_i|$  & See e.g.~\cite{van2008probing}  & $O(n\ln n)$ routine \\ \hline
$\ell_0$ &  $\sum_i \mathbf{1}_{x_i \neq 0}$ & $\begin{cases} z_i, & i \mbox{ one of the } \tau \mbox{ largest indices} \\ 0 & \mbox{otherwise}.  \end{cases}$ & Analytic\\ \hline\hline
\end{tabular}
\end{table*}

\begin{figure}[t!]
\centering     
\subfigure[True]{\label{fig:true_res}\includegraphics[scale=0.13]{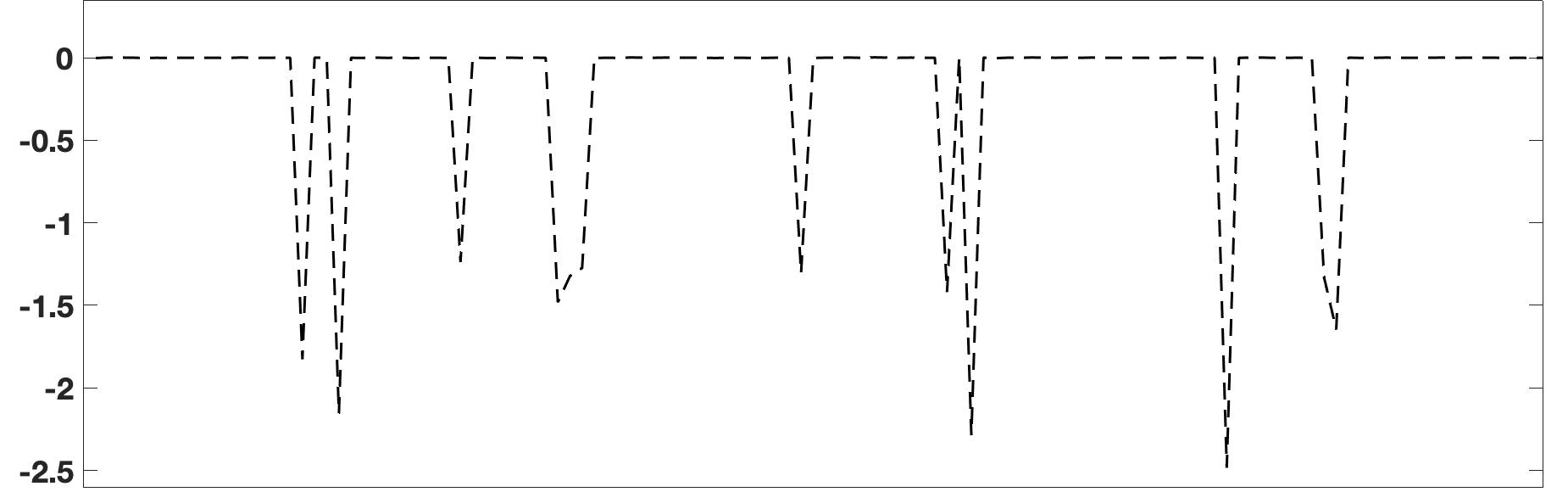}}
\subfigure[$\ell_2$]{\label{fig:l2_res}\includegraphics[scale=0.13]{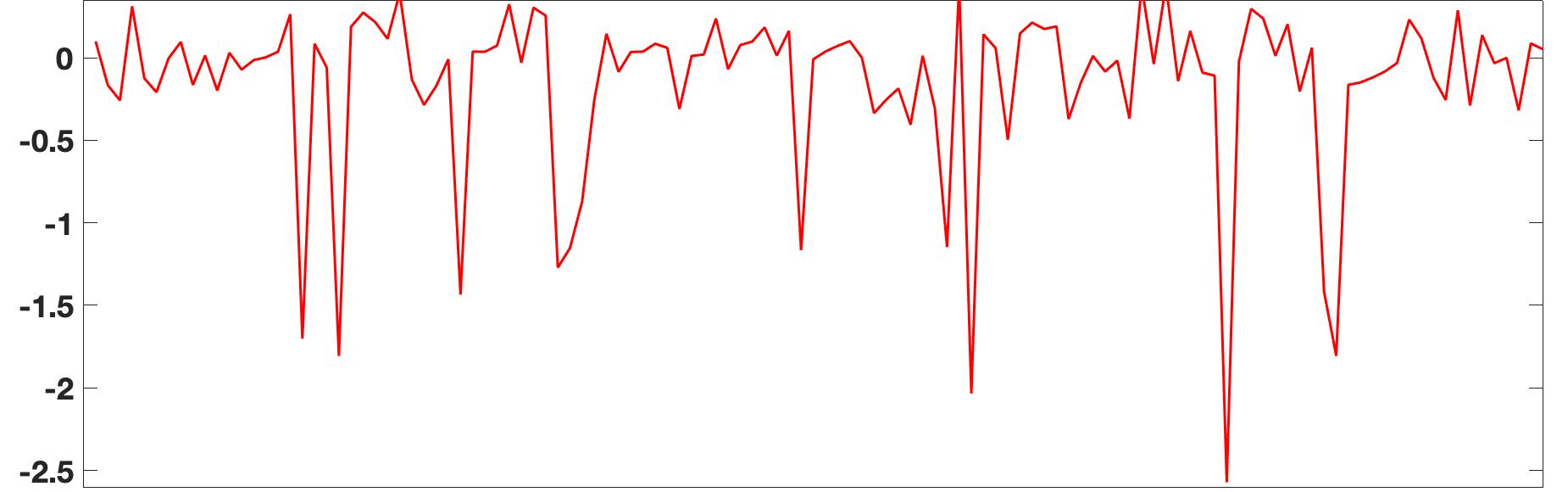}}
\subfigure[$\ell_1$]{\label{fig:l1_res}\includegraphics[scale=0.13]{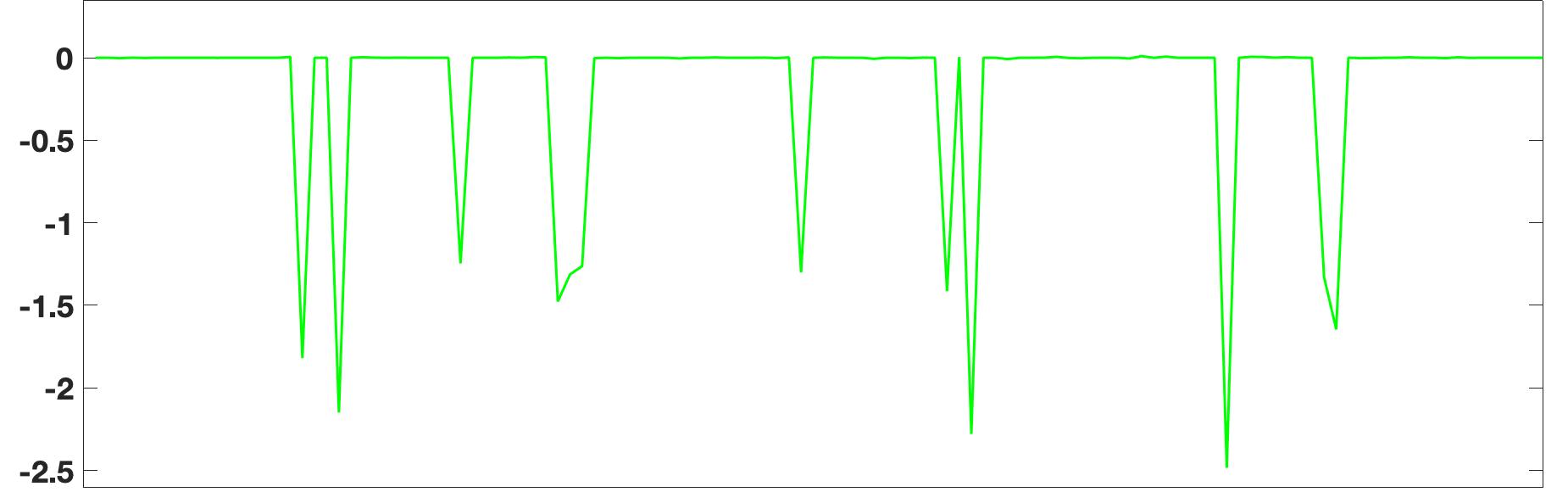}}
\subfigure[$\ell_\infty$]{\label{fig:linfty_res}\includegraphics[scale=0.13]{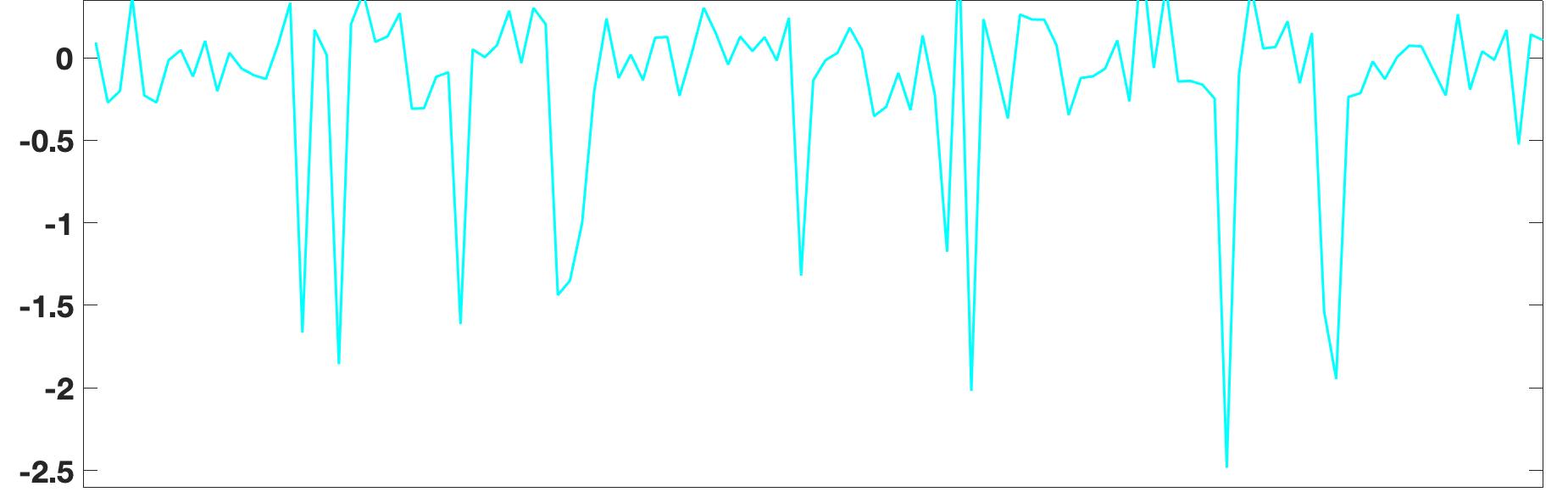}}
\subfigure[$\ell_0$]{\label{fig:l0_res}\includegraphics[scale=0.13]{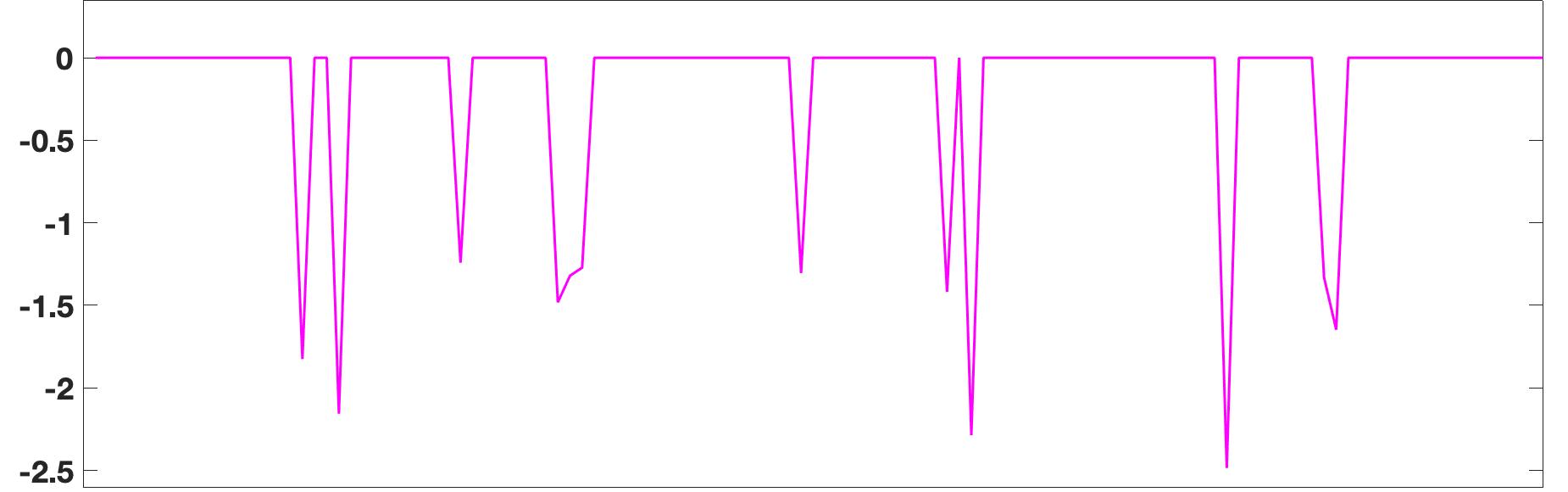}}
\caption{Residuals for different $\ell_p$-norms after algorithm termination. Note how the $\ell_{1}$- and $\ell_0$-norms can capture the outliers only.}\label{fig:se_res}
\end{figure}

\begin{figure}[t!]
\subfigure[True]{\label{fig:true_results}\includegraphics[scale=0.13]{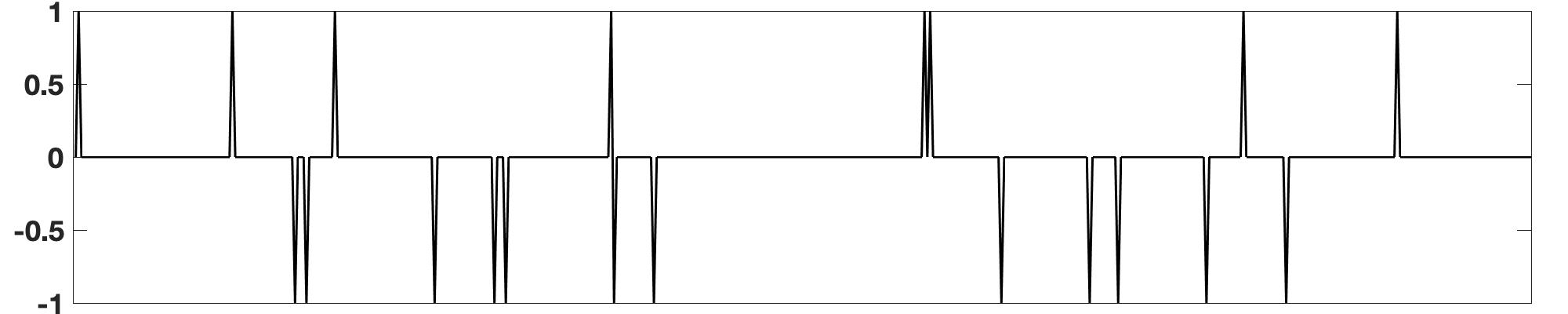}}
\subfigure[$\ell_2$]{\label{fig:l2_results}\includegraphics[scale=0.13]{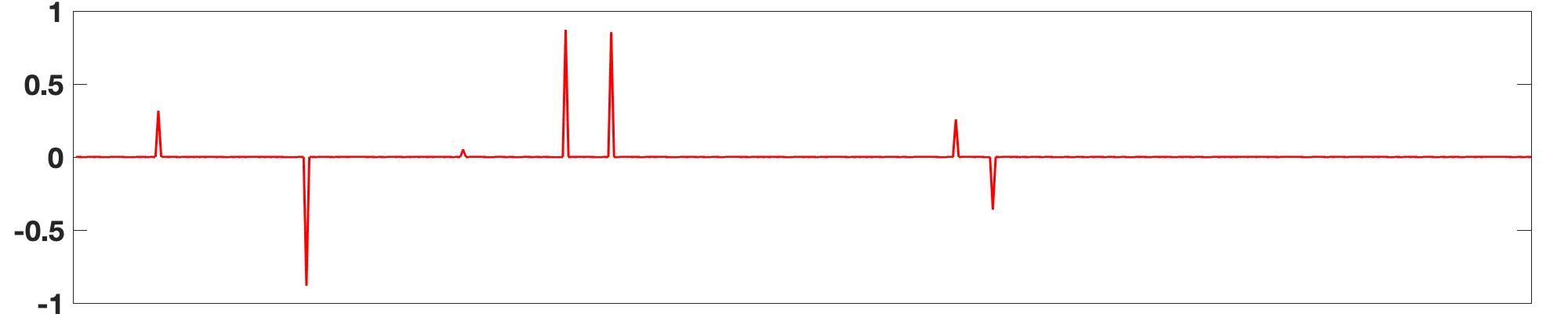}}
\subfigure[$\ell_1$]{\label{fig:l1_results}\includegraphics[scale=0.13]{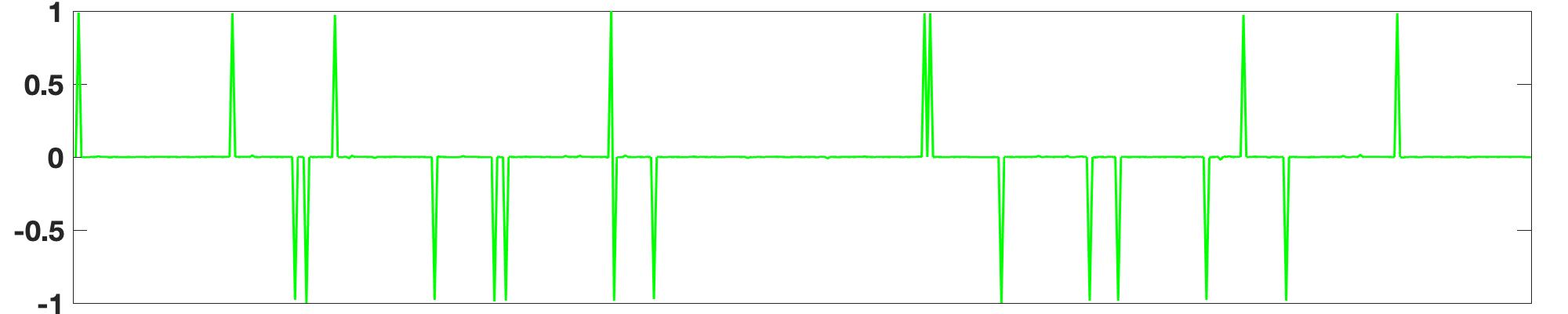}}
\subfigure[$\ell_\infty$]{\label{fig:linfty_results}\includegraphics[scale=0.13]{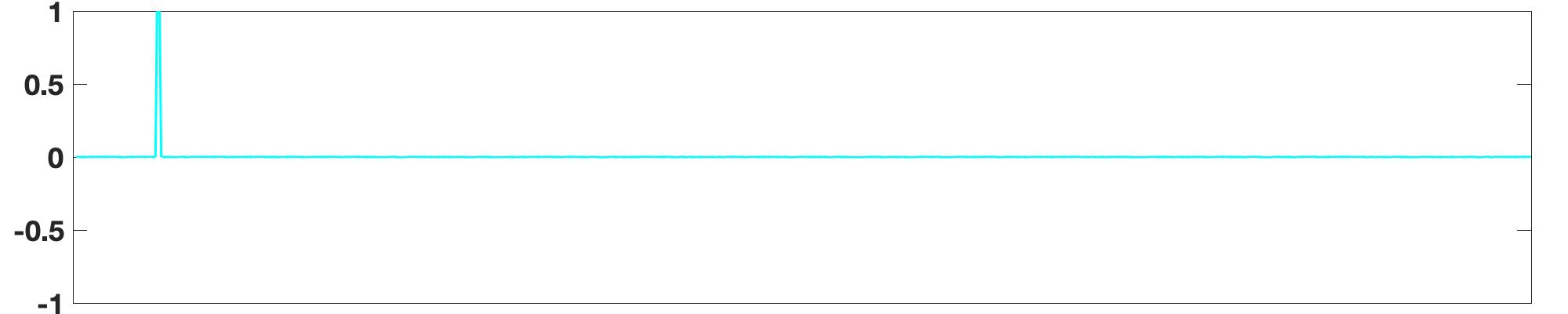}}
\subfigure[$\ell_0$]{\label{fig:l0_results}\includegraphics[scale=0.13]{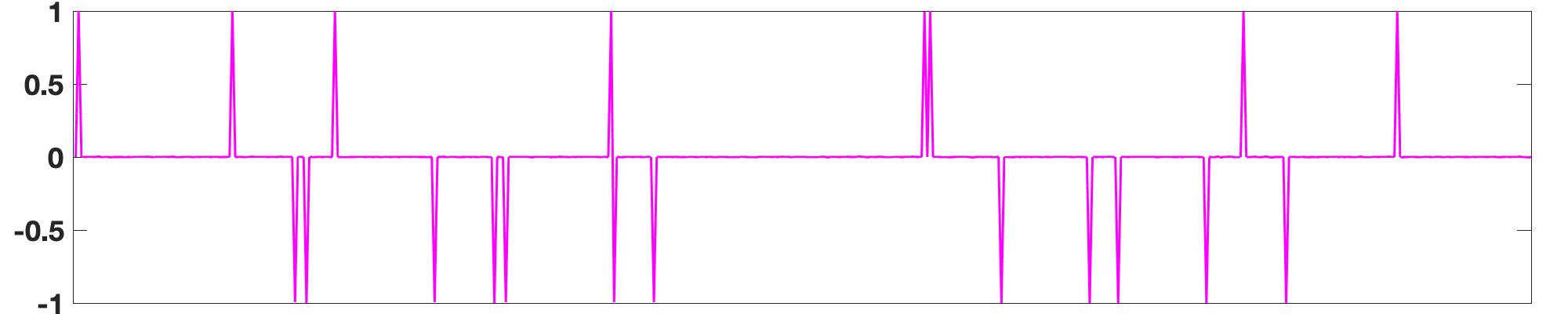}}
\caption{Basis Pursuit De-noising results for a randomly generated linear model with large, sparse noise.}
\label{fig:se_deno}
\end{figure}

Algorithm ~\ref{alg:bcd} is simple to implement. 
The least squares update in step 4  can be computed 
efficiently using either factorization with Woodbury, or an iterative method in cases where $\mathcal{A}$ is too large to store. 
For the Woodbury approach, we have 
\begin{equation}
\label{eq:woodbury}
\left(\eta_2+\eta_1 \cA^T\cA\right)^{-1}  = \frac{1}{\eta_2}I - \frac{1}{\eta_2^2} \cA^T\left(\frac{1}{\eta_1} I + \frac{1}{\eta_2}\cA\cA^T\right)^{-1}\cA.
\end{equation}
For moderate size systems, we can store Cholesky factor
\[
LL^T =  \frac{1}{\eta_1} I + \frac{1}{\eta_2}\cA\cA^T,
\]
with $L \in \mathbb{R}^{m\times m}$, and use $L$ with~\eqref{eq:woodbury} 
to implement step 4. However, in the seismic/curvelet experiment described below, the left-hand side of Equation \ref{eq:woodbury} 
is too large to store in memory, but is positive definite. Hence, we solve the resulting linear system in step 4 of Algorithm ~\ref{alg:bcd} with CG, using matrix-vector products. 
The $w_1$ update is implemented via the $\ell_1$-proximal operator (soft thresholding), while the $w_2$ update requires a projection onto the $\ell_p$ ball. 
The projectors used in our experiments are collected in Table~\ref{table:prox}. 

The least  squares solve for $x$ is when $\cC^T$ is an orthogonal matrix or tight frame, so that $\cC^T\cC = I$; this is the case 
for Fourier transforms, wavelets, and curvelets. When $\cA$ is a restriction operator, as for many data interpolation problems, $\cA^T\cA$ is a diagonal 
matrix with zeros and ones, and hence
\[
\cH = \frac{1}{\eta_1} \cC^T \cC + \frac{1}{\eta_2} \cA^T\cA
\]
is a diagonal matrix with entries either $\frac{1}{\eta_1}$  or $\frac{1}{\eta_1} + \frac{1}{\eta_2}$; the least squares problem for the $x$ update is then trivial.


\section{Basis Pursuit De-Noise Experiments}\label{scn:bpdn_test}
In this application, we consider two examples: the first is a small-scale BPDN to illustrate the proof of concept of our technique, while the second is an application to de-noising a common source gather extracted from a seismic line simulated using a 2D BG Compass model. The data set contains time samples with a temporal-interval of 4ms, and the spatial sampling is 10m. For this example, we use curvelets as a sparsfying transform domain. 
The first example considers the same model as in~\eqref{eq:bpnd} where we want to enforce sparsity on $x$ while constraining the data misfit. 
The variable $x$ is a vector of length $n$ that has values $\{-1, 1\}$ on a random $4\%$ of its entries and zeros everywhere else; 
represents a spike train that we observe using a linear operator, $A\in\R^{n,m}$. 
$A$ was generated with independent standard Gaussian entries, and $b\in\R^m$ is observed data with large, sparse noise. 
We take $m = 120$ and $n = 512$. The noise is generated by placing large values on 10\% of the observations and assuming everything else was observed cleanly (ie no noise). Here, we test the efficacy of using different $\ell_p$ norms on the residual constraint. With the addition of large, sparse noise to the data, smooth norms on the residual constraint should not be able to effectively deal with such outlier residuals. With our adaptable formulation, it should be easy to enforce both sparsity in the $x$ domain as well as the residuals. Other formulations, such as SPGL1, do not have this capability. 

This noise is depicted in as the bottom black dashed line in Figure \ref{fig:se_res}. The results are shown in Figure~\ref{fig:se_deno} and in Table~\ref{tab:small_ex}. From these, we can clearly see that the $\ell_2$ norm is not effective for sparse noise, even at the correct error budget $\sigma$. 
Our approach is resilient to different types of noise since we can easily change the residual ball projection. 
This is seen by the almost exact accuracy of the $\ell_1$ and $\ell_0$ norms, with SNR's of 33 and 45 respectively. \\

The next test of the BPDN formulation is for a common source gather where entries are both omitted and corrupted with synthetic noise. Here, the objective function looks for sparsity in the curvelet domain, while the residual constraint seeks to match observed data within a certain tolerance $\sigma$. First, we note that doing interpolation only without added noise yields an SNR of approximately 13 for all formulations and algorithms; that is, all $\ell_p$ norms for Algorithm \ref{alg:block-descent} and SPGL1. Here, we again want to enforce sparsity both in the curvelet domain ($x$) and the data residual ($\|\cA(x) - b\|$), which SPGL1 and other algorithms lack the capacity to do.

Following the first experiment, we add large sparse noise to a handful of data points; 
in this case, we added large values to a random 1\% of observations (this does not include omitted entries). 
The noise added is approximately 120, while the observed data can range from 0 to 30. 
The interpolated and denoising results are shown in Figure \ref{fig:curv} and Table \ref{tab:curv}. 
Large, sparse noise cannot be filtered effectively by a smooth norm constraint, using either Algorithm \ref{alg:block-descent} or SPGL1. 
However, $\ell_1$ and $\ell_0$ norms effectively handle such noise, and can be optimized using our approach.
The SNR's for these implementations are approximately 10 and 11 respectively, approaching that of the noiseless data mentioned above. 

\begin{table}
  \centering
  \caption{Curvelet Interpolation and Denoising results for SPGL1 and Algorithm \ref{alg:block-descent} for selected $\ell_p$-norms for BPDN.}
  \label{tab:curv}
    \begin{tabular}{|c|c|c|c|}
      \hline
      \multicolumn{4}{|c|}{4D Monochromatic Interpolation} \\
      \hline
      Method/Norm & SNR & SNR $w_1$ & Time (s) \\
      \hline
      $\ell_2$ with SPGL1 & 1.4594 &  - & 52.5 (early stoppage)\\
      $l_2$ with~Alg.\ref{alg:block-descent} &   0.1420 & 0.0851& 1348 \\\hline
      $l_1$ with~Alg.\ref{alg:block-descent} & 13.0193 & 12.5768 & 1335\\
      $l_\infty$ with~Alg.\ref{alg:block-descent}& 0.0000  & 0 & 776\\
      $l_0$ with~Alg.\ref{alg:block-descent} &    13.9019 & 13.4294& 1120 \\
      \hline
    \end{tabular}
\end{table}            

\begin{figure}
\centering
\subfigure[True Data]{\label{fig:true_curv}\includegraphics[scale=0.1]{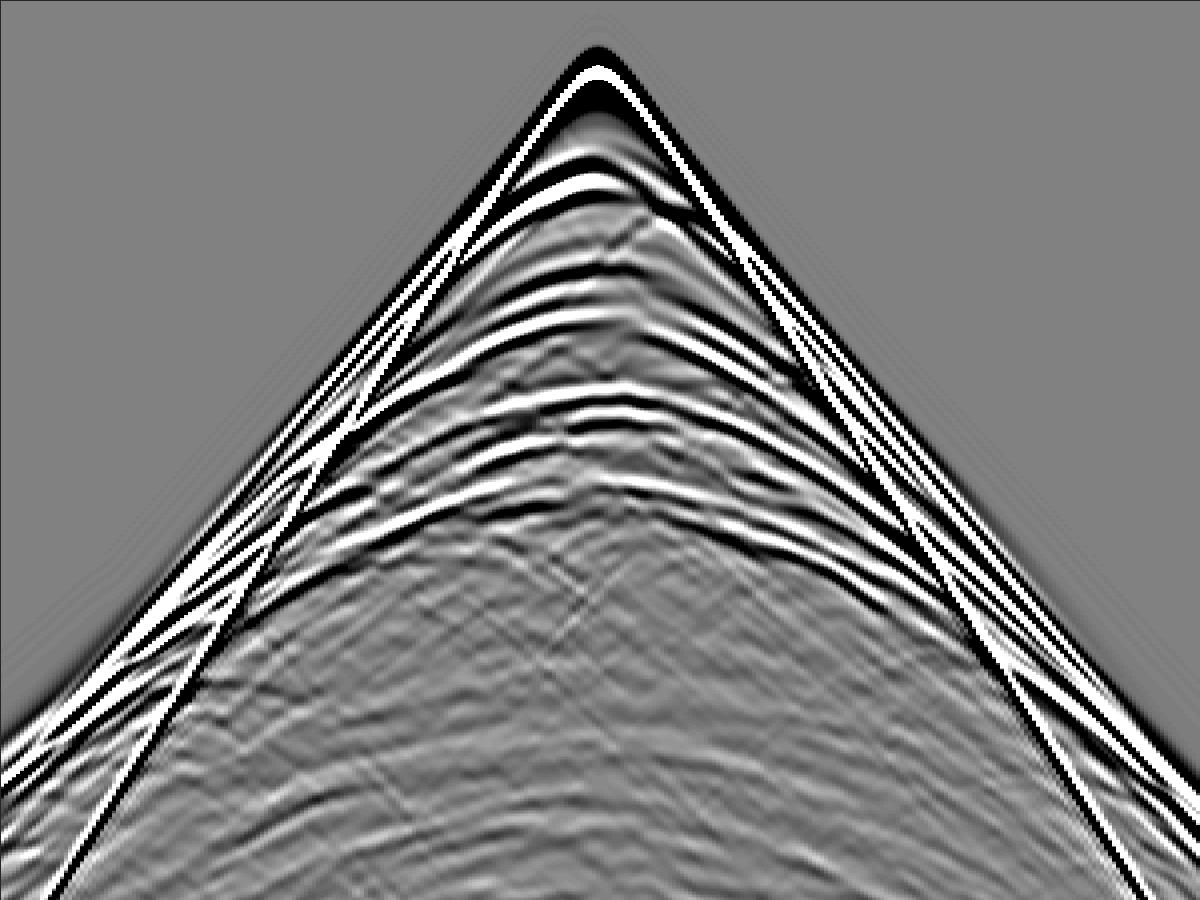} }
\subfigure[Added  Noise (binary)]{\label{fig:noise_curve}\includegraphics[scale=0.1]{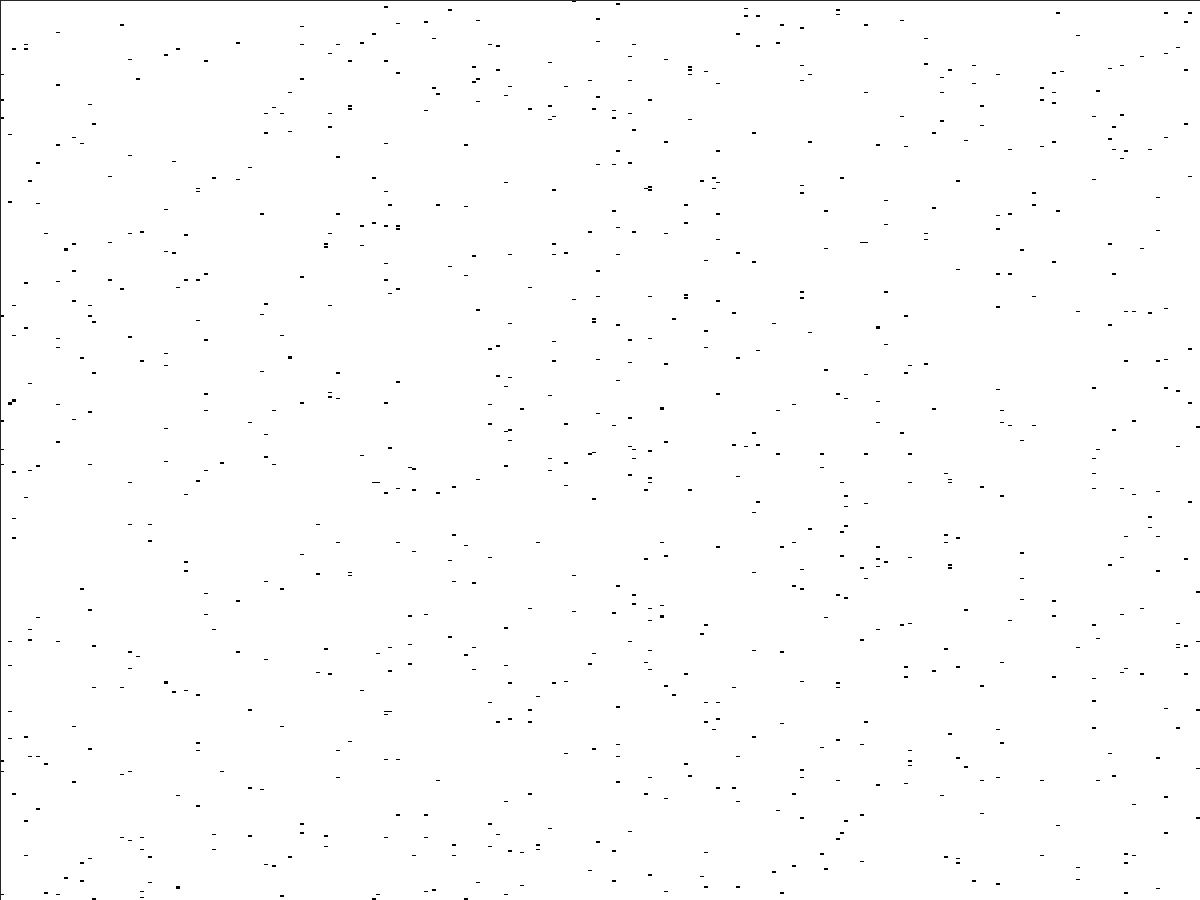}}
\subfigure[Noisy Data with Missing Sources]{\label{fig:noise_curv}\includegraphics[scale=0.1]{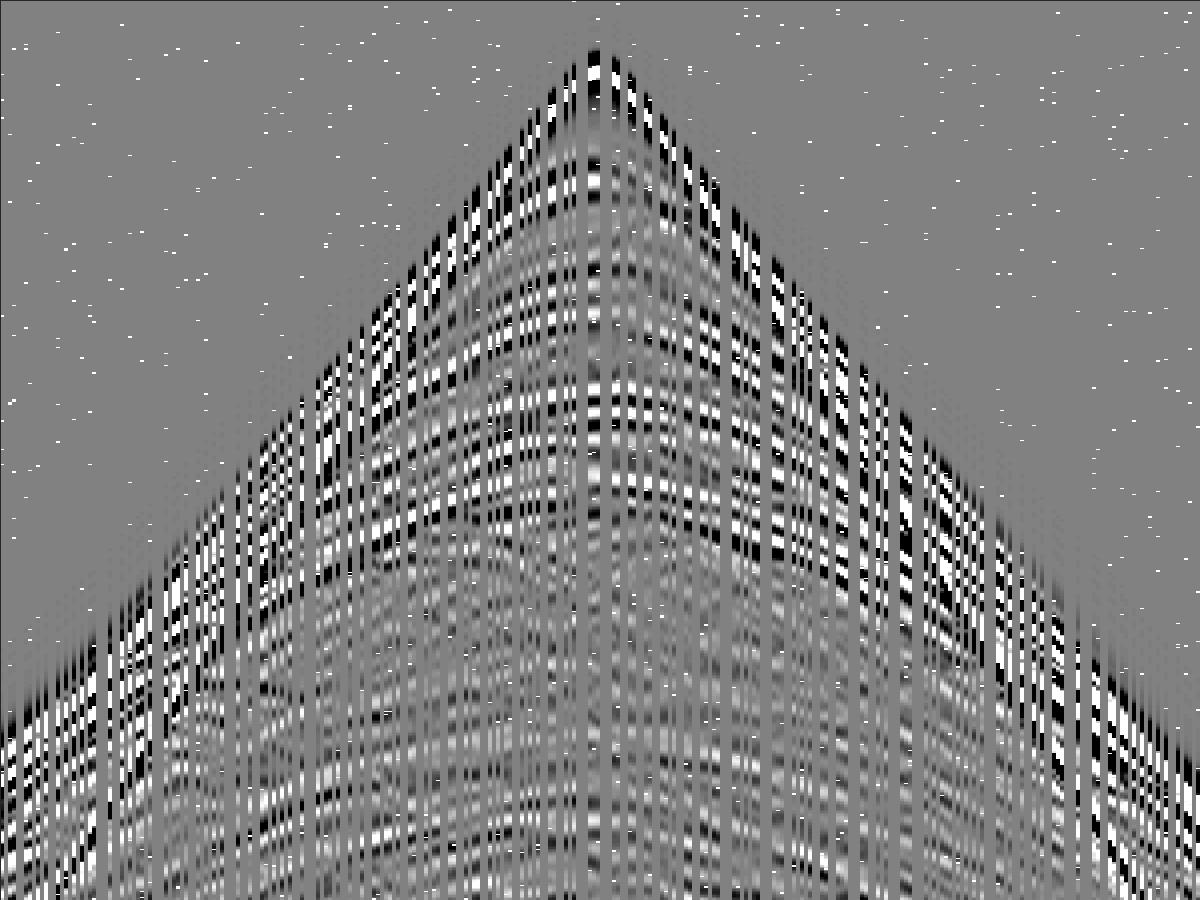} }
\subfigure[SPGL1]{\label{fig:spgl1_curv}\includegraphics[scale=0.1]{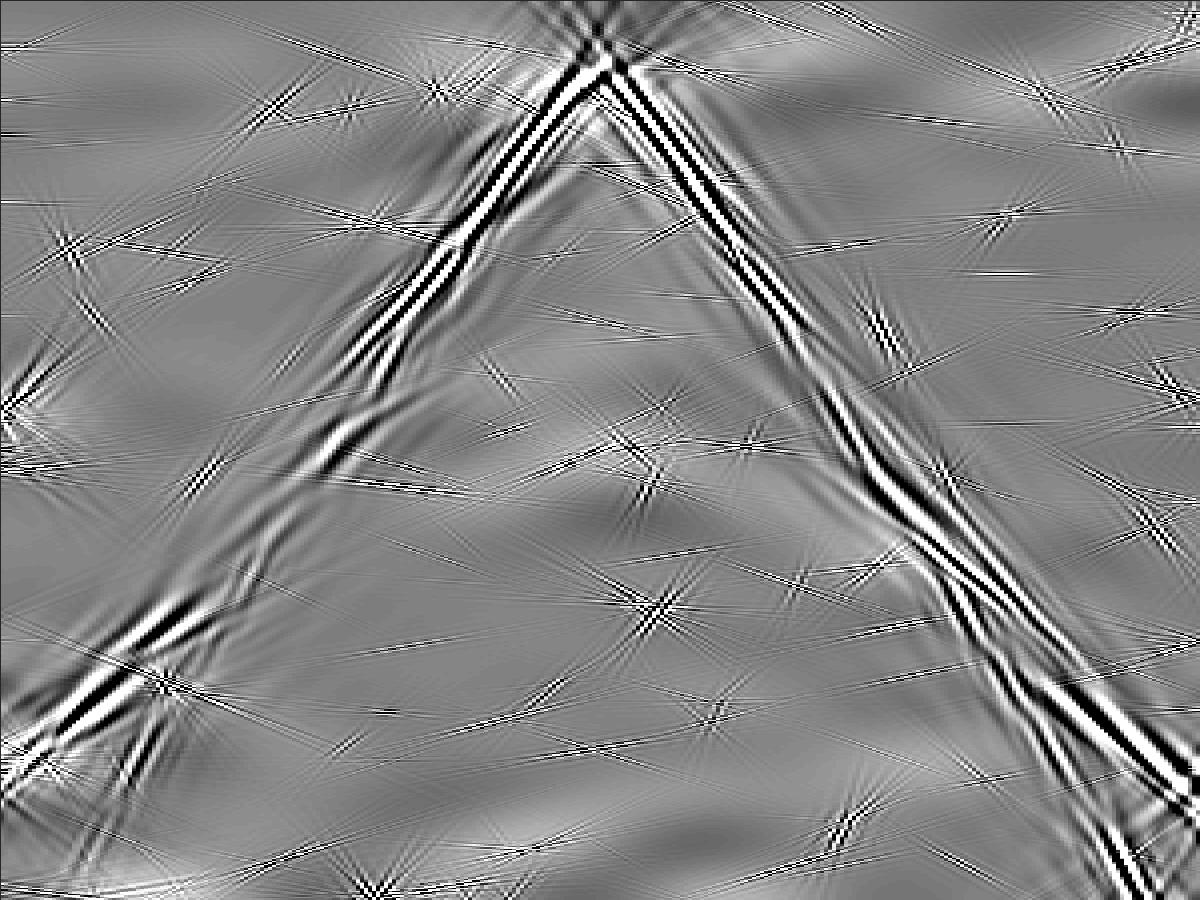} }
\subfigure[$l_2$]{\label{fig:l2_curve}\includegraphics[scale=0.1]{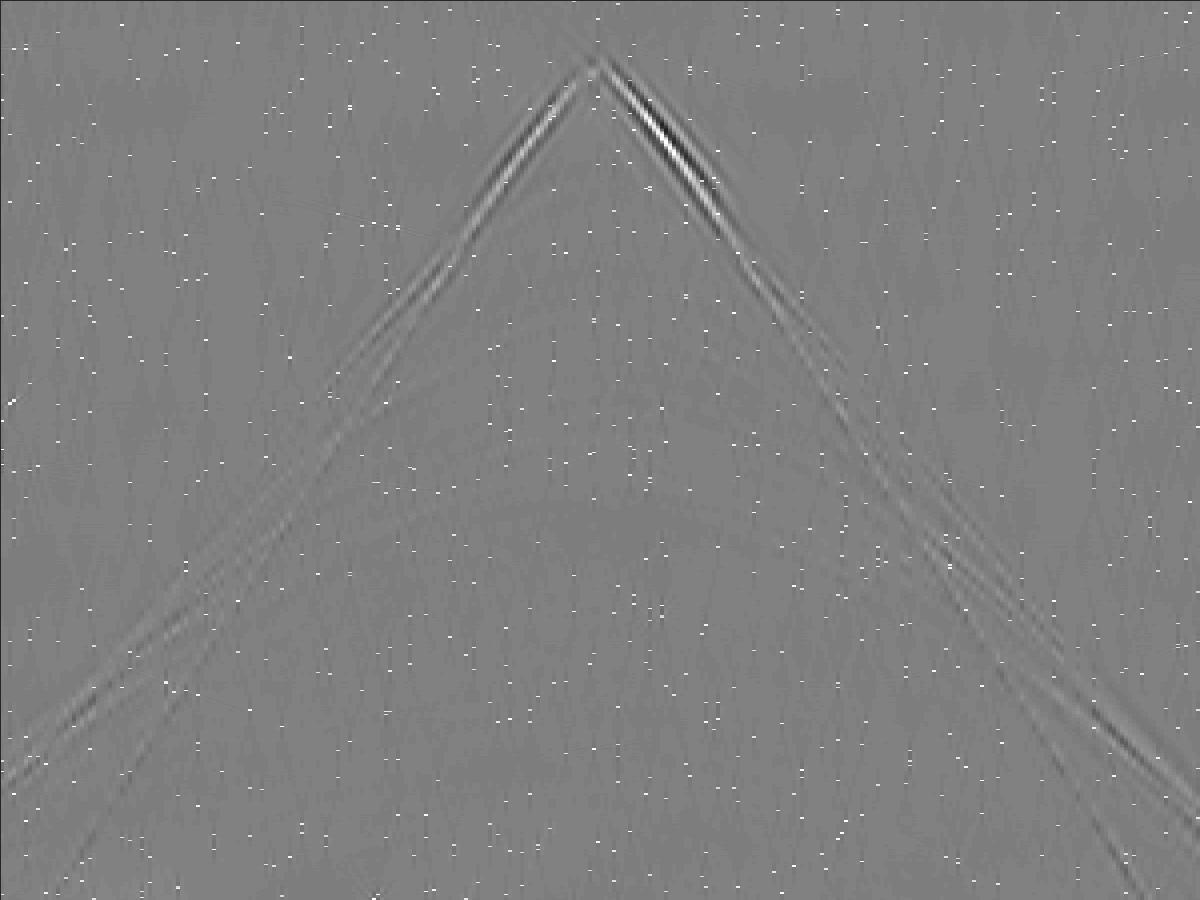}}
\subfigure[$l_1$]{\label{fig:l1_curve}\includegraphics[scale=0.1]{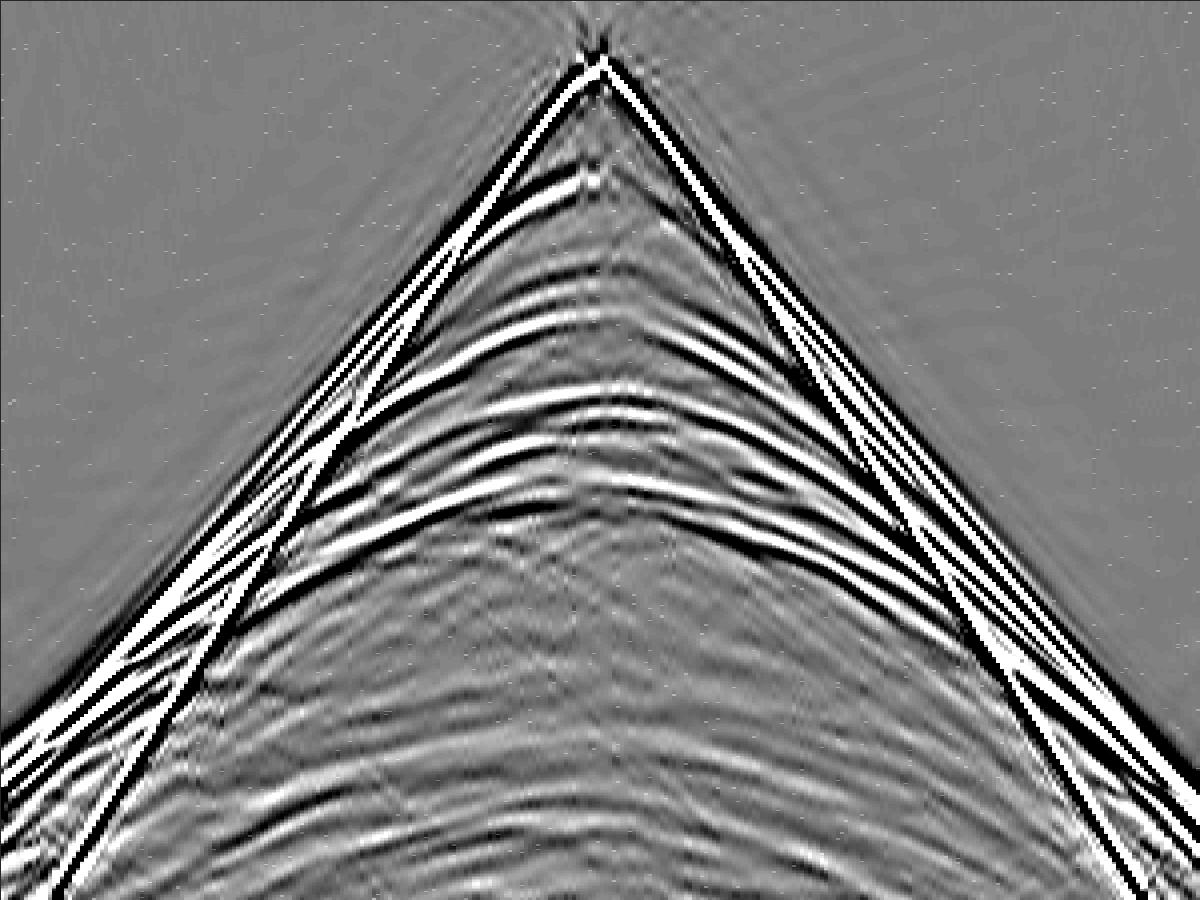}}
\subfigure[$l_\infty$]{\label{fig:linf_curve}\includegraphics[scale=0.1]{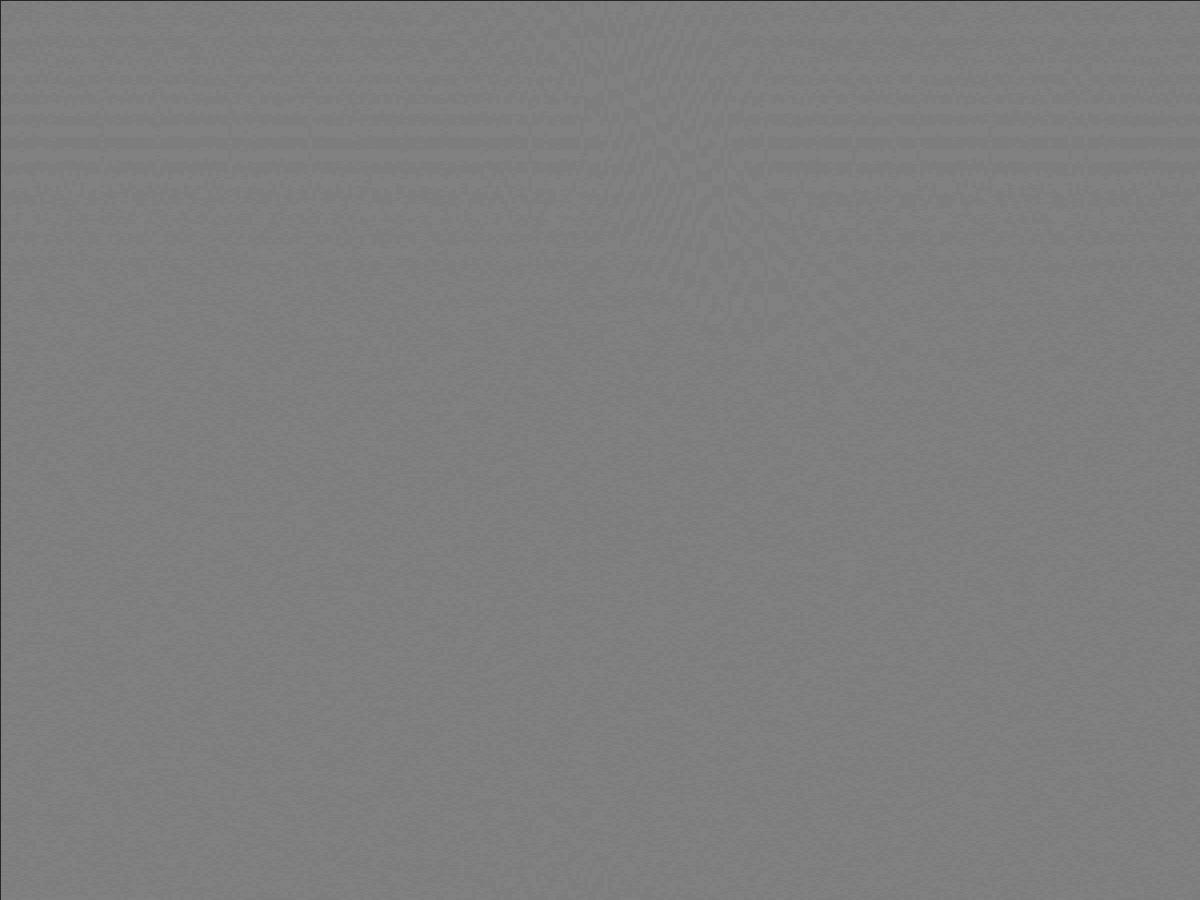}}
\subfigure[$l_0$]{\label{fig:l0_curve}\includegraphics[scale=0.1]{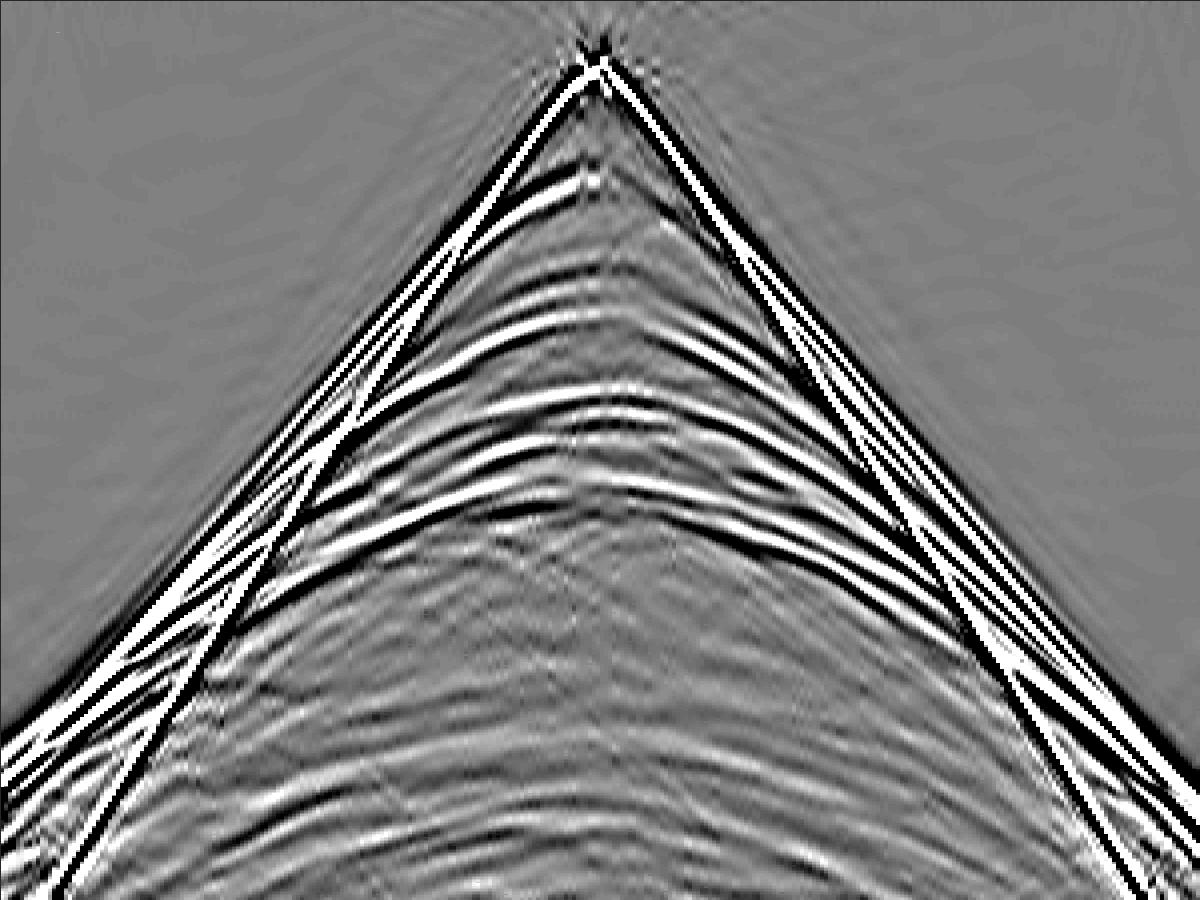}}
\caption{Interpolation and de-noising results for BPDN in the curvelet domain. Observe the complete inaccuracy of smooth norms with large, sparse noise.}
\label{fig:curv}
\end{figure}

\section{Extension to Low-Rank Models}\label{scn:lr}

Treating the data as having a matrix structure gives additional regularization tools --- 
in particular low-rank structure in particular domains. 
The BPDN formulation for residual-constrained low-rank interpolation 
is given by
\begin{equation}
  	\label{eqn:morozov}
  	\min_{X}\|X\|_* \quad \text{s.t.}\ \rho\left(\mathcal{A}(X) - b\right) \leq \sigma
  \end{equation} 
 for $X\in \mathbb{C}^{m\times n}$, $\mathcal{A}:\mathbb{C}^{n\times m} \rightarrow \mathbb{C}^p$ is a linear masking operator from full to observed (noisy) data $b$, 
and $\sigma$ is the misfit tolerance. 
The nuclear norm $\|X\|_*$ is the $\ell_1$ norm of the singular values of $X$. 
Solving the problem~\eqref{eqn:morozov} requires using a decision variable that 
is the size of the data, as well as updates to this variable that require SVDs at each iteration. 
It is much more efficient to model $X$ is a product of two matrices $L$ and $R$, given by
\begin{equation}
	\min_{L, R} \frac{1}{2}(\|L\|_F^2 + \|R\|_F^2) \quad \text{s.t.} \ 
\rho\left(\mathcal{A}(LR^T) - b\right) \leq \sigma \label{eqn:lrform}
\end{equation}
where $L\in\mathbb{C}^{n\times k}$, $R\in\mathbb{C}^{m\times k}$, and $LR^T$ is the low-rank representation of the data. 
The solution is guaranteed to be at most rank $k$, and in addition, 
the regularizer $\frac{1}{2}(\|L\|_F^2 + \|R\|_F^2) $ is an upper bound for $\|LR^T\|_*$, the sum of singular values of 
$LR^T$, further penalizing rank by proxy.  
The decision variables then have combined dimension $k(m\times n)$, which is much smaller than the $nm$ variables 
required by convex formulations. When $\rho$ is smooth, the problems are solved using a continuation  
that interchanges the roles of the objective and constraints, solving a sequence of 
problems where $\rho\left(\mathcal{A}(LR^T) - b\right)$ is minimized over the $\ell_2$ ball~\cite{fastlowrank} using 
projected gradient; an approach we call SPGLR below.  

When $\rho$ is not smooth, SPGLR does not work and there are no available implementations for~\eqref{eqn:lrform}. 
Nonsmooth $\rho$ arise when we want the residual to be in the $\ell_1$ norm ball, so we are 
 robust to outliers in the data, and can exactly fit inliers. 

We now extend Algorithm~\ref{alg:bcd} to this case. 
For any $\rho$ (smooth or nonsmooth), we introduce a latent variable $W$ for the data matrix, and solve
%
\begin{equation}
 \label{eqn:final}
	\min_{L, R, W} \left\|\begin{matrix} L \\ R\end{matrix}\right\|_F^2 + \frac{1}{2\eta}\|W - LR^T\|_2^2, \quad \text{s.t.} \ \|\mathcal{A}(W) - b\|_p \leq \sigma
\end{equation}
with $\eta$ a parameter that controls the degree of relaxation; as $\eta \downarrow 0$ we have $W \rightarrow LR^T$. 
The relaxation allows a simple block-coordinate descent detailed in Algorithm~\ref{alg:block-descent}.
\begin{algorithm}[H]
\caption{Block-Coordinate Descent for~\eqref{eqn:final}.}
\label{alg:block-descent}
\begin{algorithmic}[1]
\State{\bfseries Input:} $w_0, L_0, R_0$
\State{Initialize: $k=0$}
\While{not converged}
\Let{$L_{k+1}$}{$\left(I +\eta R_k^TR_k\right)^{-1}(\eta W_kR_k)$}
\Let{$R_{k+1}$}{$(\eta W_k^T L_{k+1})\left(I + \eta L_{k+1}^TL_{k+1}\right)^{-1}$}
\Let{$W_{k+1}$}{$\begin{cases}(L_{k+1}R^T_{k+1})_{ij},\quad  (i,j) \in X_{obs}\\\proj_{\mathbb{B}_{\rho,\sigma}}\left(\mathcal{A}(L_{k+1}R_{k+1}^T)-b\right), \quad  \text{o.w.}\end{cases}$}
\Let{$k$}{$k+1$}
\EndWhile
\State{\bfseries Output:} $w_k, L_k, R_k$
\end{algorithmic}
\end{algorithm}
Algorithm~\ref{alg:block-descent} is also simple to implement. It requires two least squares solves (for $L$ and $R$), 
which are inherently parallelizable. It also requires a projection of the updated data matrix estimates $LR^T$
onto the $\sigma$-level set of the misfit penalty $\rho$. This step is detailed below. 

For unobserved data $(i,j)\not\in X_{obs}$, we have $W_{ij} = (LR^T)_{ij}$. 
For observed data, let $v$ denote $\mathcal{A}(LR^T)$.
Then the $W$ update step is given by solving 
\begin{align*}
	\min_w \|w - v\|^2_2, \quad \text{s.t.} \|w - b\|_p\leq \sigma.
\end{align*}
Using the simple substitution $z  = w-b$, the we get 
\begin{align*}
	\min_z \|z - (v -b)\|_2^2, \quad \text{s.t.} \quad \|z\|_p \leq \sigma
\end{align*}
which is precisely the projection of $\mathcal{A}(LR^T) - b$ onto $\mathbb{B}_{p,\sigma}$, the $\sigma$-level set of $\rho$. 
We use the same projectors for $\rho \in \{l_0, l_1, l_2, l_\infty\}$ as in Section~\ref{scn:bpdn_test}, see Table~\ref{table:prox}. 
The convergence criteria for Algorithm~\ref{alg:block-descent} is based on the optimality of the quadratic subproblems in $L,R$ and feasibility measure of $W-LR^T$, 
though in practice we compare performance of algorithms based on a computational budget.  
This block-coordinate descent scheme converges to a stationary point of Equation~\ref{eqn:final} by \cite[Theorem 4.1]{tseng2001convergence}.

Implementing block-coordinate descent on these forms until convergence produces the completed low-rank matrix. 
Setting $\nu = \|LR^T - w\|_2^2$, we iterate until $\nu< 1e-5$ or a maximum number of iterations is reached. 
In the next section, we develop an application of this method to seismic interpolation and denoising.

 \section{4D Matrix completion with De-noising}\label{scn:lr_test}
There are two main requirements when using the rank-minimization based framework for seismic data interpolation and denoising:  
 \emph{(i)} underlying seismic data should exhibit low-rank structure (singular values should decay fast) in some transform domain, and, \emph{(ii)} subsampling and noise destroy the low-rank structure (singular values decay slow) in that domain. For exploiting the low-rank structure during interpolation and denoising, we follow the matricization strategy proposed by \cite{da2015optimization}. The matricization (source-x, source-y), i.e., placing both the source coordinates along the columns (Figure~\ref{fig:sxsy}), 
 gives slow-decay of singular values (Figure~\ref{fig:svalsa}), while the matricization (source-x, receiver-x) (Figure~\ref{fig:sxrx}) gives fast decay of the singular values (Figure~\ref{fig:svalsb}).  To understand the effect of subsampling on the low-rank structure, we remove the  50\% of the sources. 
 Subsampling destroys the fast singular value decay in the (source-x, receiver-x) matricization, but not in the (source-x, receiver-y) matricization. 
This is because missing sources are missing columns in the (source-x, source-y) matricization, and missing sub-blocks in the (source-x, receiver-x) matricization (Figure~\ref{fig:sxsy_sub}). The latter is more effective for low-rank interpolation.

Similar to the BPDN experiments, we want to show that nonsmooth constraints on the data residual can be effective for dealing with large, sparse noise. The smooth $\ell_2$ norm that is most common in BPDN problem will fail in such examples, thereby leading to better data estimation with the implementation of non-smooth norms on the residuals. Thus, the goal of the below experiments is to show that enforcing sparsity in the singular values (ie low-rank) and sparsity in the residual constraint can be more effective with large, sparse noise than smooth residual constraints solved by most contemporary algorithms.

 \subsection{Experiment Description}
This example demonstrates the efficacy of the proposed approach using data created by a 5D dataset based on a complex SEG/EAGE overthrust model simulation \cite{aminzadeh1994seg}. The dimension of the model is $5\,\mathrm{km} \times 10\,\mathrm{km} \times 10\,\mathrm{km}$ and is discretized on a $25\,\mathrm{m} \times 25\,\mathrm{m} \times 25\,\mathrm{m}$ grid. The simulated data contains $201 \times 201$ receivers sampled at 50 m and $101 \times 101$ sources sampled at 100 m. 
We apply the Fourier transform along the time domain and extract a frequency slice at 10 Hz as shown in Figure~\ref{fig:true}, 
which is a 4D object (source-x, source-y, receiver-x and receiver-y). 
We eliminate 80\% of the sources and add large sparse outliers from the random gaussian distribution 
$\mathcal{N}(0, a_i\max(X_{s_i}))$ (mean zero and variance on the order of the largest value in that particular source). 
The 10 generated values with the highest magnitudes are kept, and these are randomly added to observations in the remaining sources (Figure~\ref{fig:interdeno_data}). 
The largest value of our dataset is approximately 40, while the smallest is close to zero. 
Thus, we are essentially increasing/decreasing 1\% of the entries by several orders of magnitude, which contaminates the data significantly, especially 
 if the original entry was nearly $0$. 
 For all low-rank completion and denoising, we let $a_i = 10^{-1}$. 
 The objective is to recover missing sources and eliminate noise from observed data.
 We use a rank of $k=75$ for the formulation (that is, $L\in\mathbb{C}^{n\times 75}$ and similarly for $R$), 
 and run all algorithms for 150 iterations, using a fixed computational budget. 
 We perform three experiments on the same dataset: 1) De-noising only (Figure~\ref{fig:deno_data}); 2) Interpolation only (Figure~\ref{fig:inter_data}); and 3) Combined Interpolation and De-noising (Figure~\ref{fig:interdeno_data}). Since we have ground truth, we pick $\sigma$ to be the exact difference between generated noisy data and the true data; $\sigma$ for the $l_0$ norm is a cardinality measure, so it is set to number of noisy points added. 

\begin{figure}
\centering
\subfigure[(source-x, source-y)]{\label{fig:svalsa}\includegraphics[scale=0.5]{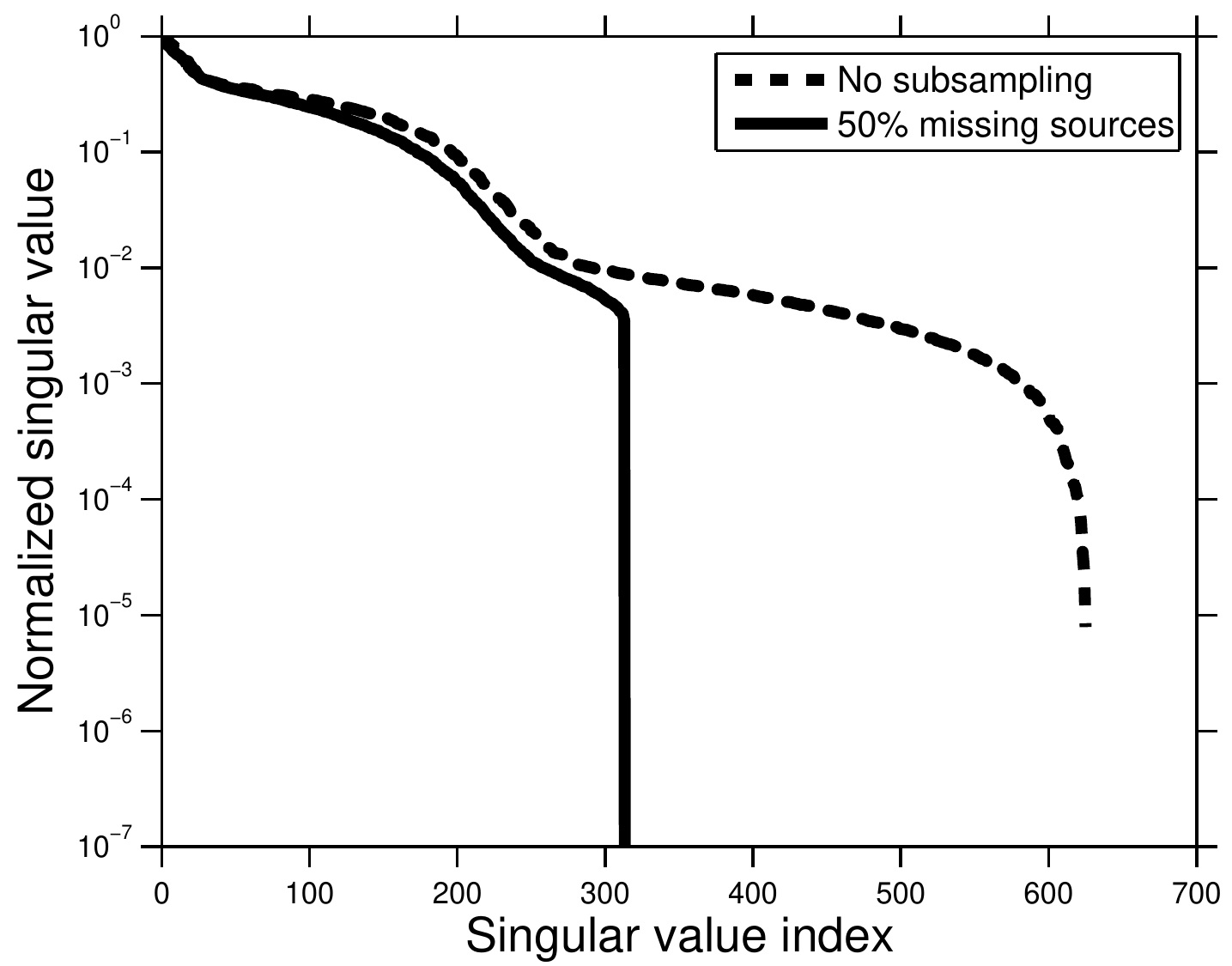} }
\subfigure[(source-x, receiver-x)]{\label{fig:svalsb}\includegraphics[scale=0.5]{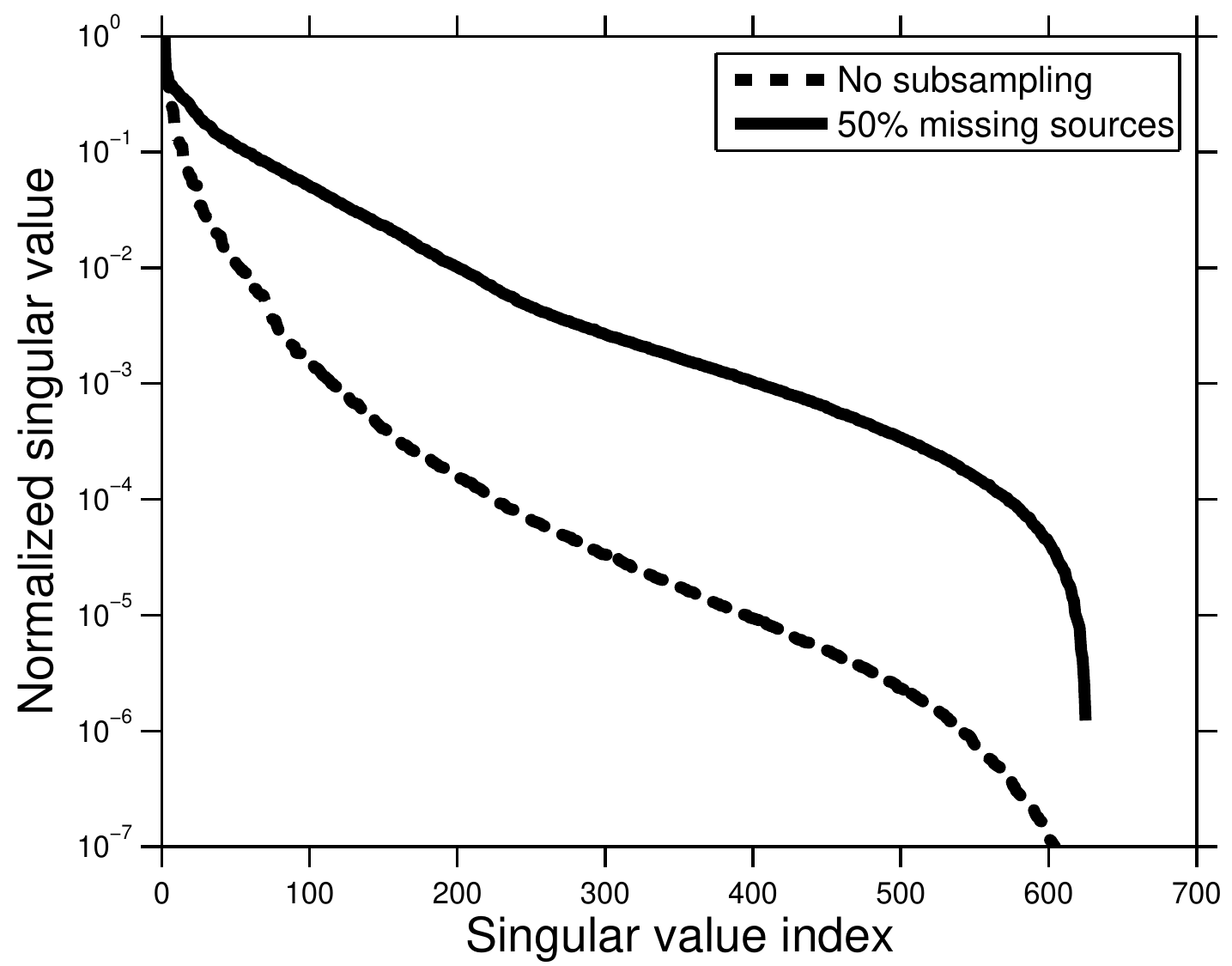}}
\caption{Normalized Singular value decay for full data and 50\% missing sources with two different matricizations. (Source: \cite{kadu2018EAGEdfwi}).}
\label{fig:svals}
\end{figure}

\begin{figure}
\centering
\subfigure[Full (src-x, src-y)]{\label{fig:sxsy}\includegraphics[scale=0.2]{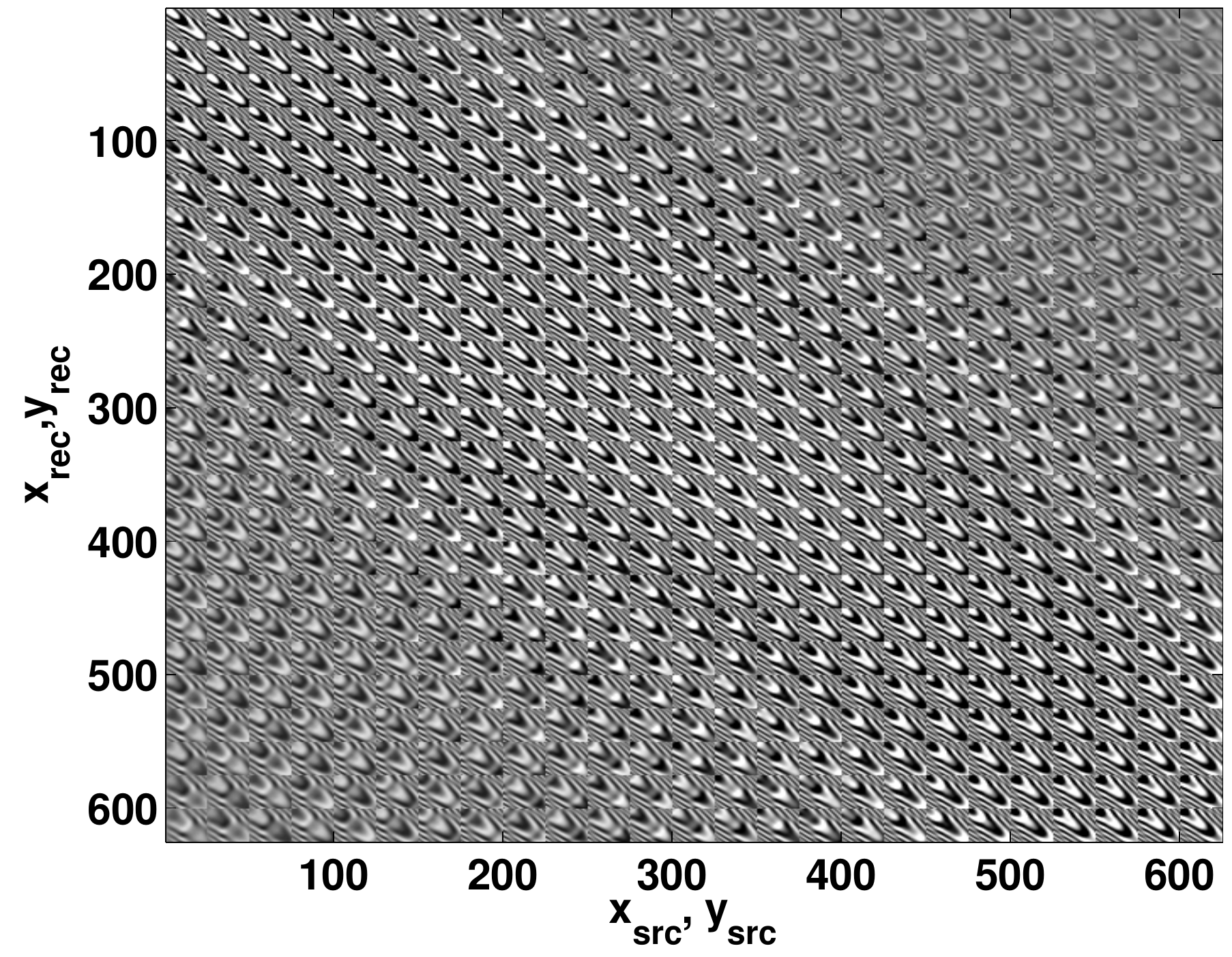} }
\subfigure[Subsampled (src-x, src-y)]{\label{fig:sxsy_sub}\includegraphics[scale=0.2]{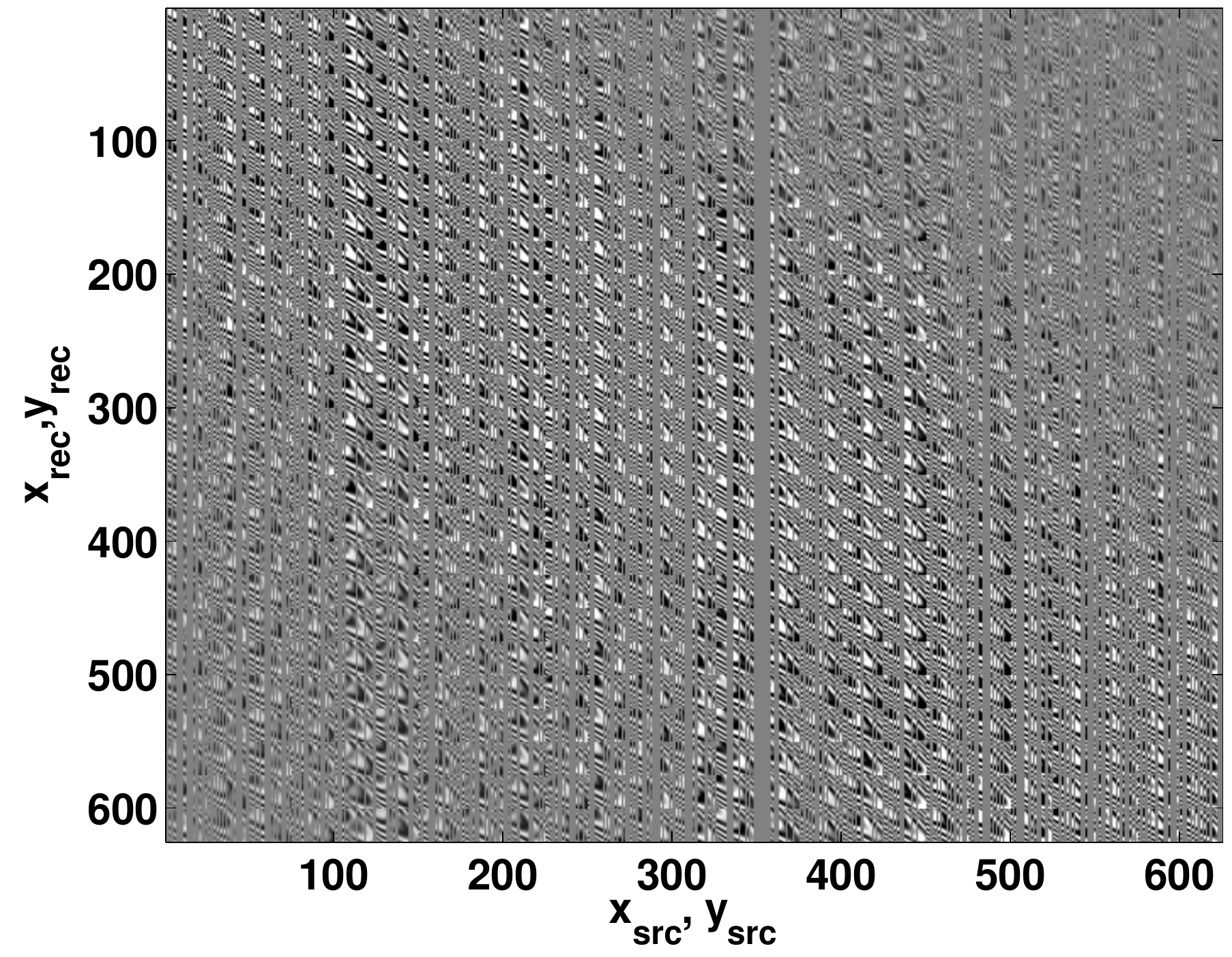} }
\subfigure[Full (src-x, rec-x)]{\label{fig:sxrx}\includegraphics[scale=0.2]{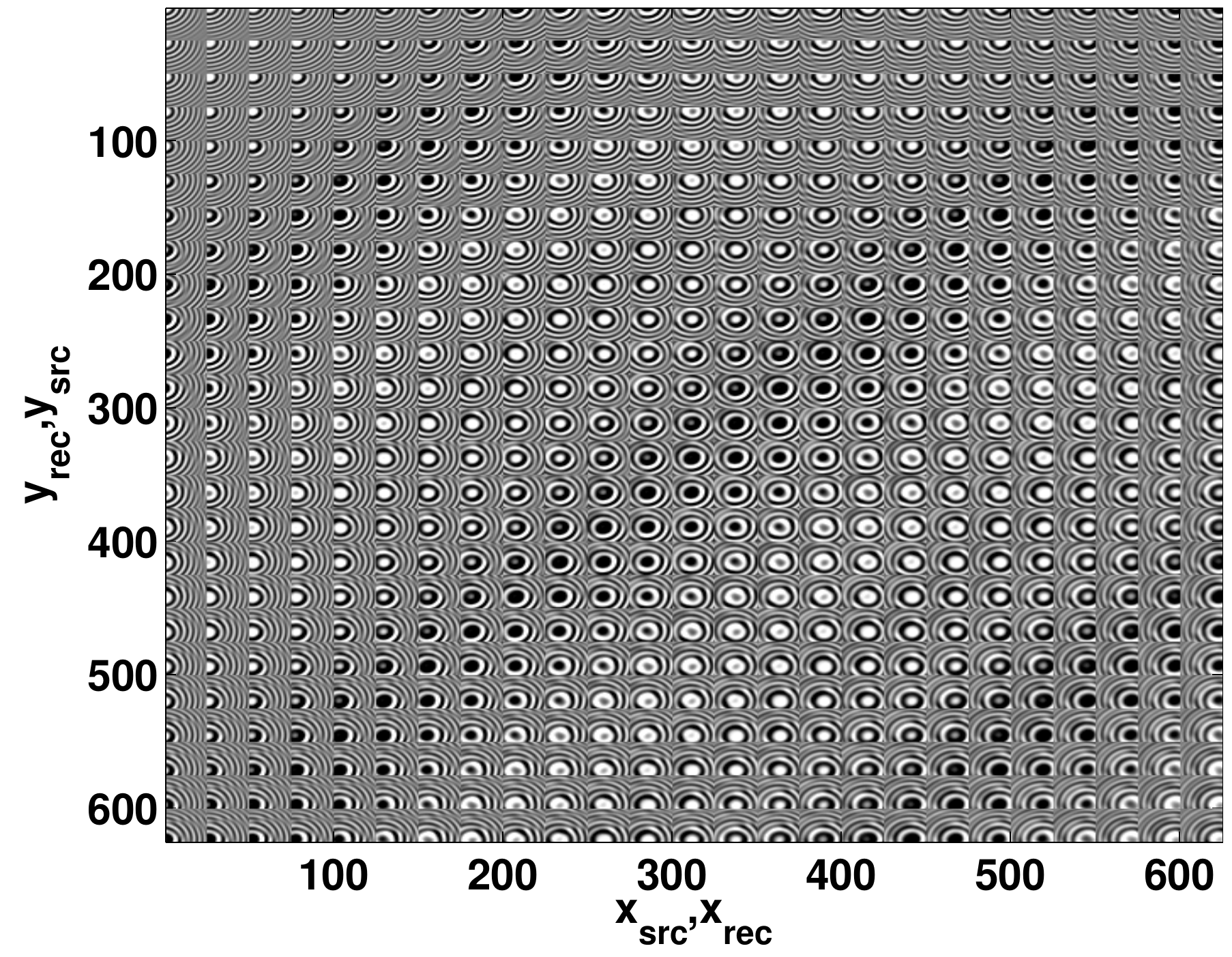}}
\subfigure[Subsampled (src-x, rec-x)]{\label{fig:sxrx_sub}\includegraphics[scale=0.2]{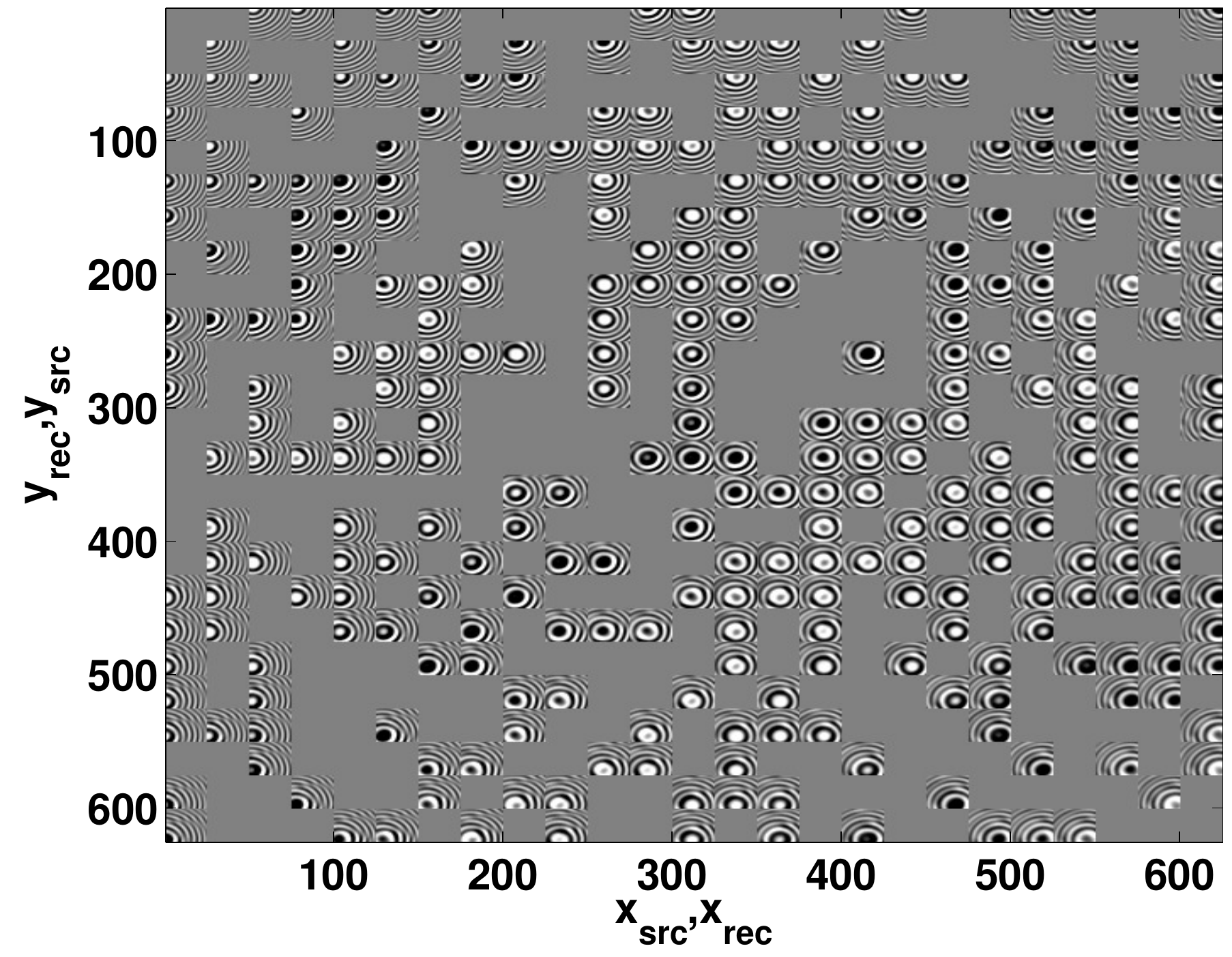} }
\caption{Full and subsampled matricizations used in low-rank completion (Source: \cite{kumar2015efficient}).}
\label{fig:srcxsrcymatricization}
\end{figure}

\subsection{Results}
Tables \ref{tab:deno}-\ref{tab:interdeno} display SNR values for different algorithms and formulations for the three types of experiments, 
and Figures \ref{fig:deno}-\ref{fig:interdeno} display the results for a randomly selected number of sources for the three experiments. 
Even a small number of outliers can greatly impact the quality of the low-rank de-noising and interpolation for the standard, smoothly residual-constrained algorithms. 
The de-noising only results (Figure \ref{fig:deno}, Table \ref{tab:deno}) show that all methods perform well when all sources are available. 
The interpolation only results (Figure \ref{fig:inter}, Table \ref{tab:inter}) show that all constraints perform well in interpolating the missing data. 
This makes sense, as all algorithms will simply favor the low-rank nature of the data. 
However, the combined de-noising and interpolation dataset shows that the $\ell_0$ norm approach does far better than any smooth norm in comparable time. 
Table \ref{tab:interdeno} shows that when data for similar sources is absent/not observed, the smoothly-constrained formulations fail completely.
When noise is added to the low-amplitude section of the observed data, 
the smoothly-constrained norms fail drastically, while the $\ell_0$ norm can effectively remove the errors. 
This is starkly evident in Figures \ref{fig:interdeno_spglr}-\ref{fig:interdeno_l0}, where all except Figure \ref{fig:interdeno_l0} are essentially noise;
the result is supported by the SNR values in Table \ref{tab:interdeno}. While Figures \ref{fig:interdeno_spglr}-\ref{fig:interdeno_l0} can mostly capture the structure of the data where there were nonzero values (ie where the seismic wave is observed in the upper left corner of each source), only the $\ell_0$ norm can capture the areas of lower energy data.

  \begin{table}
  \centering
  \caption{4D De-noising results for SPGLR and Algorithm \ref{alg:block-descent} for selected $\ell_p$ norms.}
  \label{tab:deno}
    \begin{tabular}{|c|c|c|c|}
      \hline
      \multicolumn{4}{|c|}{4D Monochromatic De-noising} \\
      \hline
      Method/Norm & SNR & SNR-W & Time (s)\\
      \hline
      $\ell_2$ with SPGLR & 11.7489 &  - & 16530   \\
      $l_2$ with~Alg.\ref{alg:block-descent} &   11.7463 &-2.3338 &   9430    \\\hline
      $l_1$ with~Alg.\ref{alg:block-descent} & 11.7638 &-2.3063 &    11546    \\
      $l_\infty$ with~Alg.\ref{alg:block-descent}& 11.7456  & -2.3338&  12108    \\
      $l_0$ with~Alg.\ref{alg:block-descent} &    17.9595& 48.8607&    11569\\
      \hline
    \end{tabular}
    \end{table}
  \begin{table}
  \centering
  \caption{4D Interpolation results for SPGLR and Algorithm \ref{alg:block-descent} for selected $\ell_p$ norms.}
  \label{tab:inter}
    \begin{tabular}{|c|c|c|c|}
      \hline
      \multicolumn{4}{|c|}{4D Monochromatic Interpolation} \\
      \hline
      Method/Norm & SNR &SNR-W & Time (s)\\
      \hline
      $\ell_2$ with SPGLR & 16.3976&  - & 5817    \\
      $l_2$ with~Alg.\ref{alg:block-descent} &   16.0629& 16.5424 &    7526     \\\hline
      $l_1$ with~Alg.\ref{alg:block-descent} & 16.0692&16.5491  &   7996   \\
      $l_\infty$ with~Alg.\ref{alg:block-descent}& 16.0627&16.5423  &  8119    \\
      $l_0$ with~Alg.\ref{alg:block-descent} &    16.0096&16.4728 &    6848 \\
      \hline
    \end{tabular}
    \end{table}

  \begin{table}
  \centering
  \caption{4D Combined De-noising and Interpolation results for SPGLR and Algorithm \ref{alg:block-descent} for selected $\ell_p$ norms.}
  \label{tab:interdeno}
    \begin{tabular}{|c|c|c|c|}
      \hline
      \multicolumn{4}{|c|}{4D Monochromatic De-noising \& Interpolation} \\
      \hline
      Method/Norm & SNR &SNR-W &  Time (s)\\
      \hline
      $\ell_2$ with SPGLR & -3.2906 & -   & 8712    \\
      $l_2$ with~Alg.\ref{alg:block-descent} &   0.9185& -0.3321  &    6802     \\\hline
      $l_1$ with~Alg.\ref{alg:block-descent} & 0.9193& -0.3235  &    8068    \\
      $l_\infty$ with~Alg.\ref{alg:block-descent}& 0.9185& -0.3321  &  8117    \\
      $l_0$ with~Alg.\ref{alg:block-descent} &    16.0655& 16.5445 &    6893 \\
      \hline
    \end{tabular}
    \end{table}



\section{Conclusions}
We proposed a new approach for level-set formulations, including basis pursuit denoise and residual-constrained low-rank formulations. 
The approach is easily adapted to a variety of nonsmooth and nonconvex data constraints. 
The resulting problems are solved using Algorithm~\ref{alg:varpro} and \ref{alg:block-descent}; which require 
only that  the penalty $\rho$ has an efficient projector. 
The algorithms are simple, scalable, and efficient. 
Sparse curvelet de-noising and low-rank interpolation of a monochromatic slice from the 4D seismic data volumes demonstrate the potential of the approach. 

A particular quality of the seismic denoising and interpolation problem is that the amplitudes of the signal have significant spatial variation.  
The error in the data is a much larger problem for low-amplitude data. This quality makes it very difficult to obtain reasonable results using 
Gaussian misfits and constraints. Nonsmooth exact formulations (including $\ell_1$ and particularly $\ell_0$) appear to be extremely well-suited
for this magnified heteroscedastic issue. 

\section{Acknowledgements}
The authors acknowledge support from the Department of Energy Computational Science Graduate Fellowship, which is provided under grant number DE-FG02-97ER25308,
and the Washington Research Foundation Data Science Professorship.

\begin{figure*}
\centering     
\subfigure[Fully sampled monochromatic slize at 10 Hz.]{\label{fig:true}\includegraphics[scale=0.14]{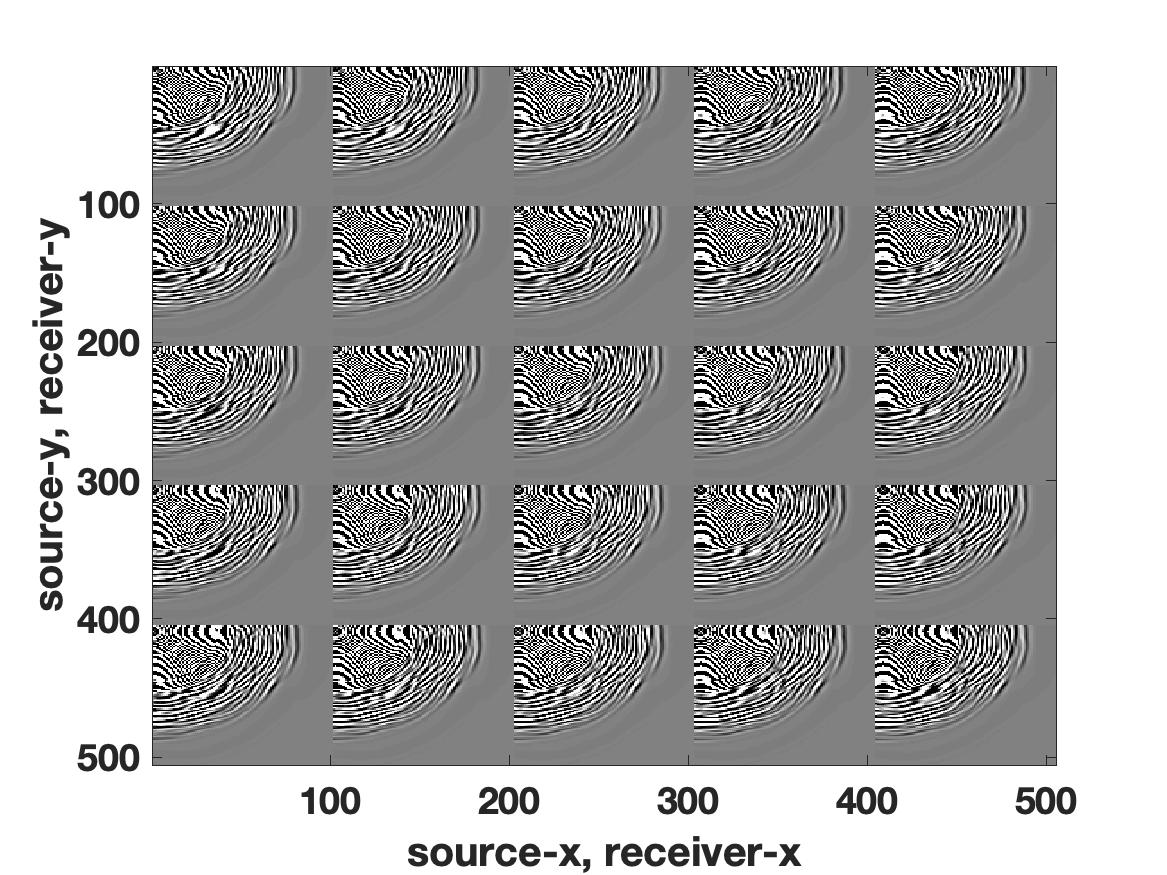}} 
\subfigure[Noisy data alone (binary). Sparse noise was added by keeping the top 10 entries generated from a normal distribution with mean zero and variance $0.1\max(X_{s_{i}})$]{\label{fig:deno_noise}\includegraphics[scale=0.14]{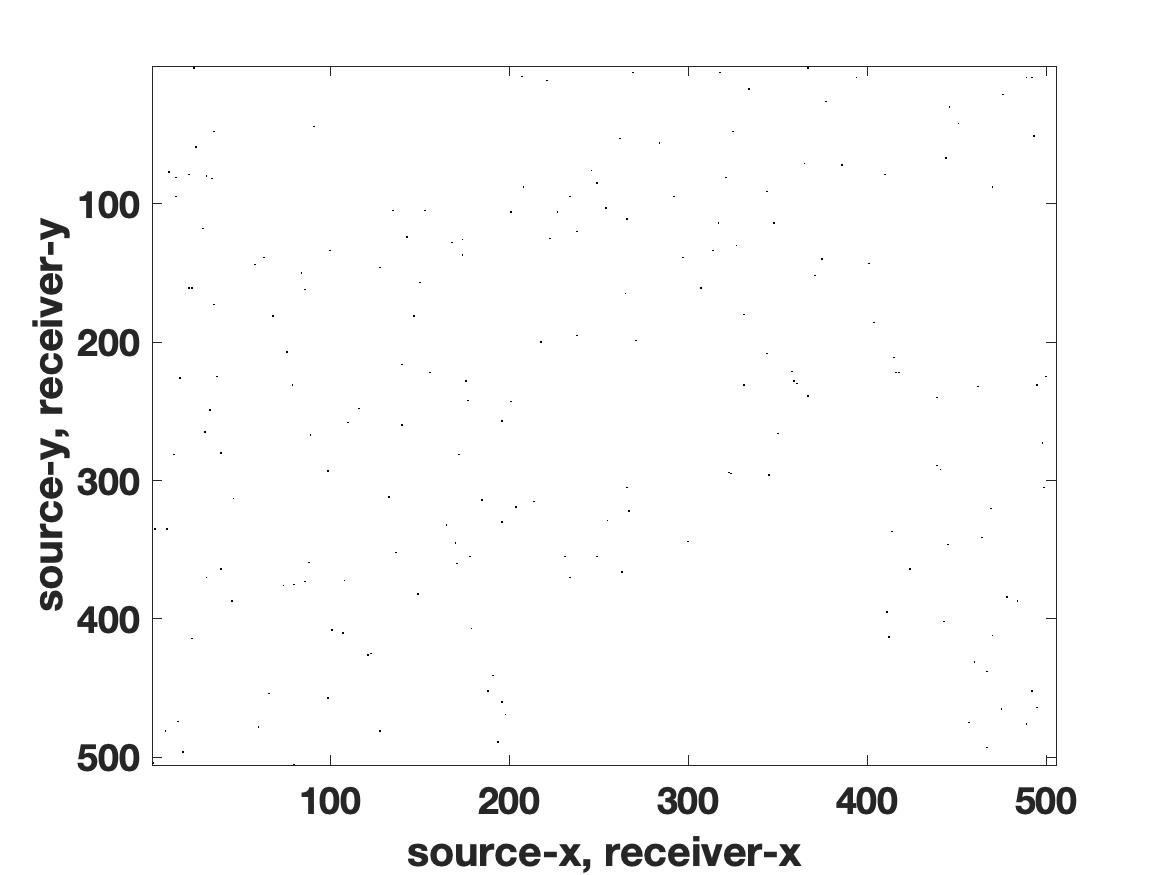}}
\subfigure[Observed noisy data.]{\label{fig:deno_data}\includegraphics[scale=0.14]{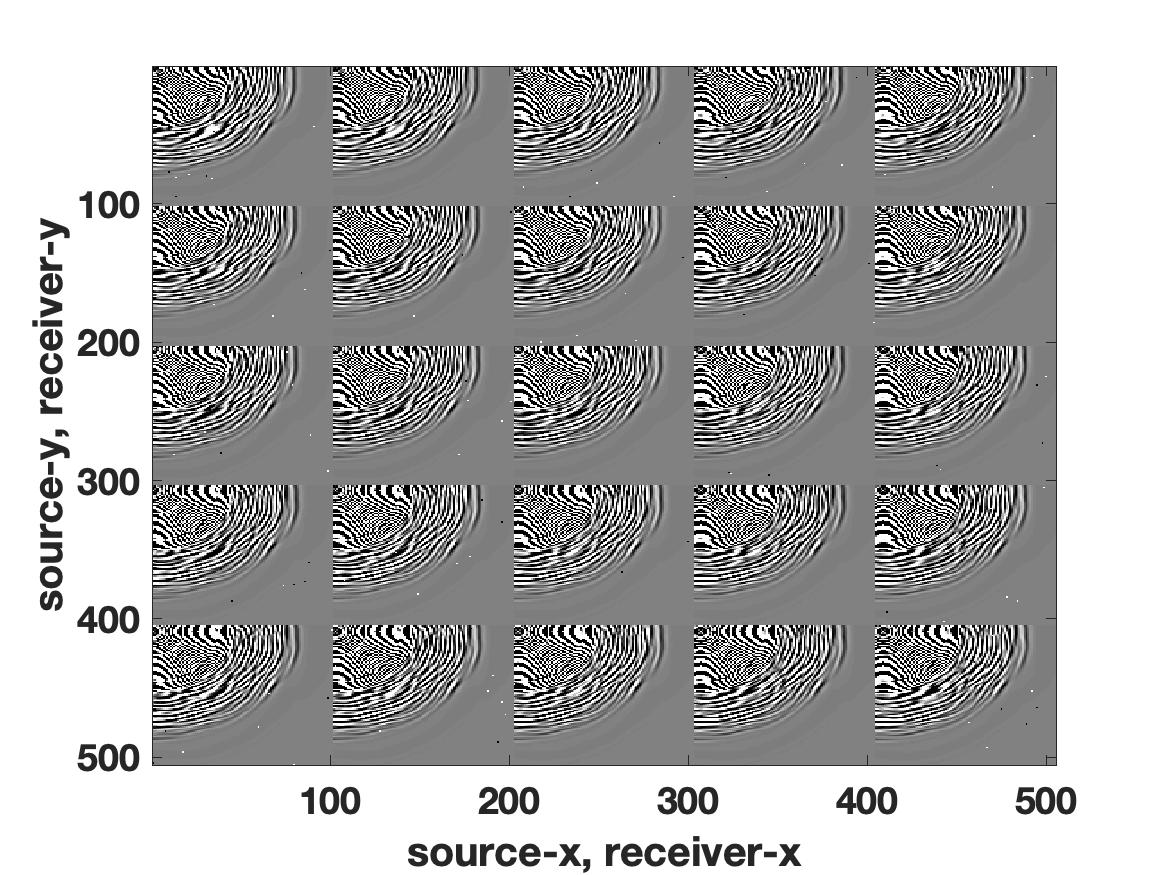}}
\subfigure[Subsampled noiseless data. We omitted 80\% of sources.]{\label{fig:inter_data}\includegraphics[scale=0.14]{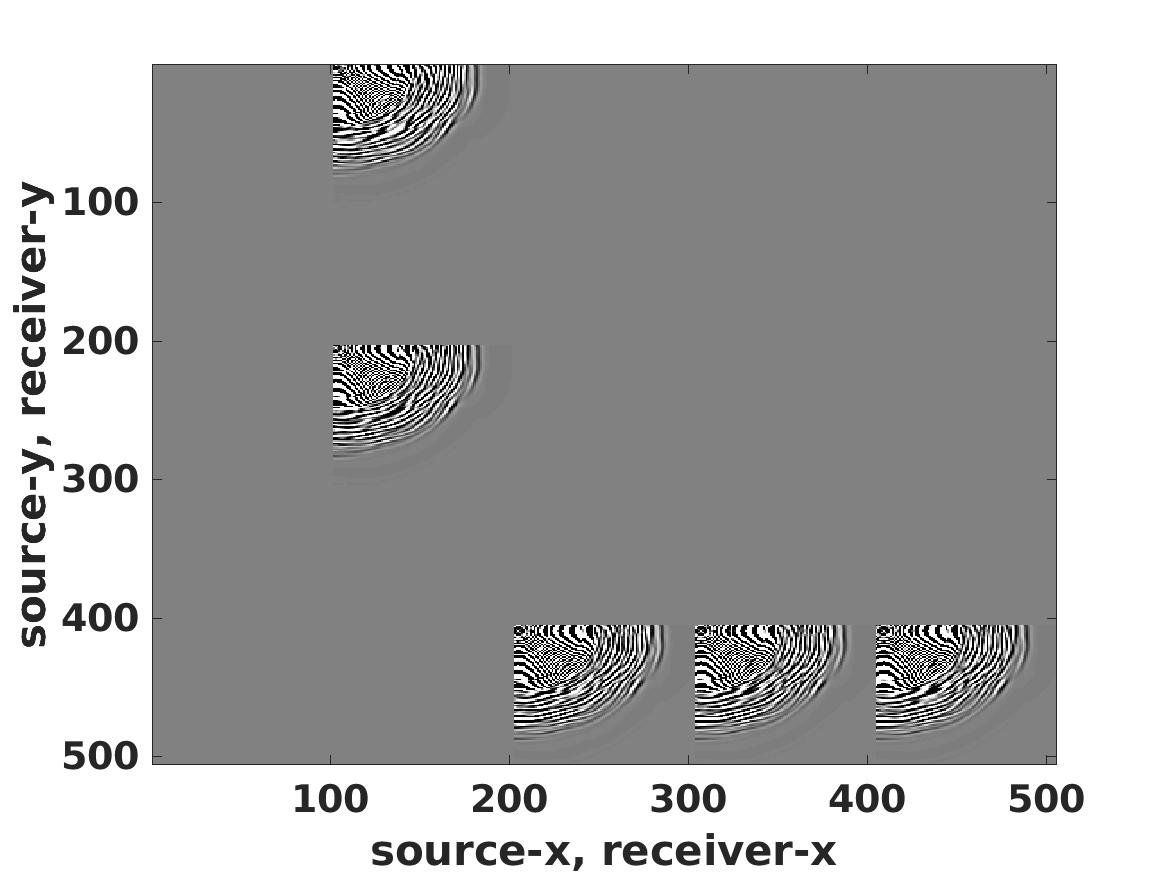}}
\subfigure[Subsampled and noise, with noise only present (binary).]{\label{fig:interdeno_noise}\includegraphics[scale=0.14]{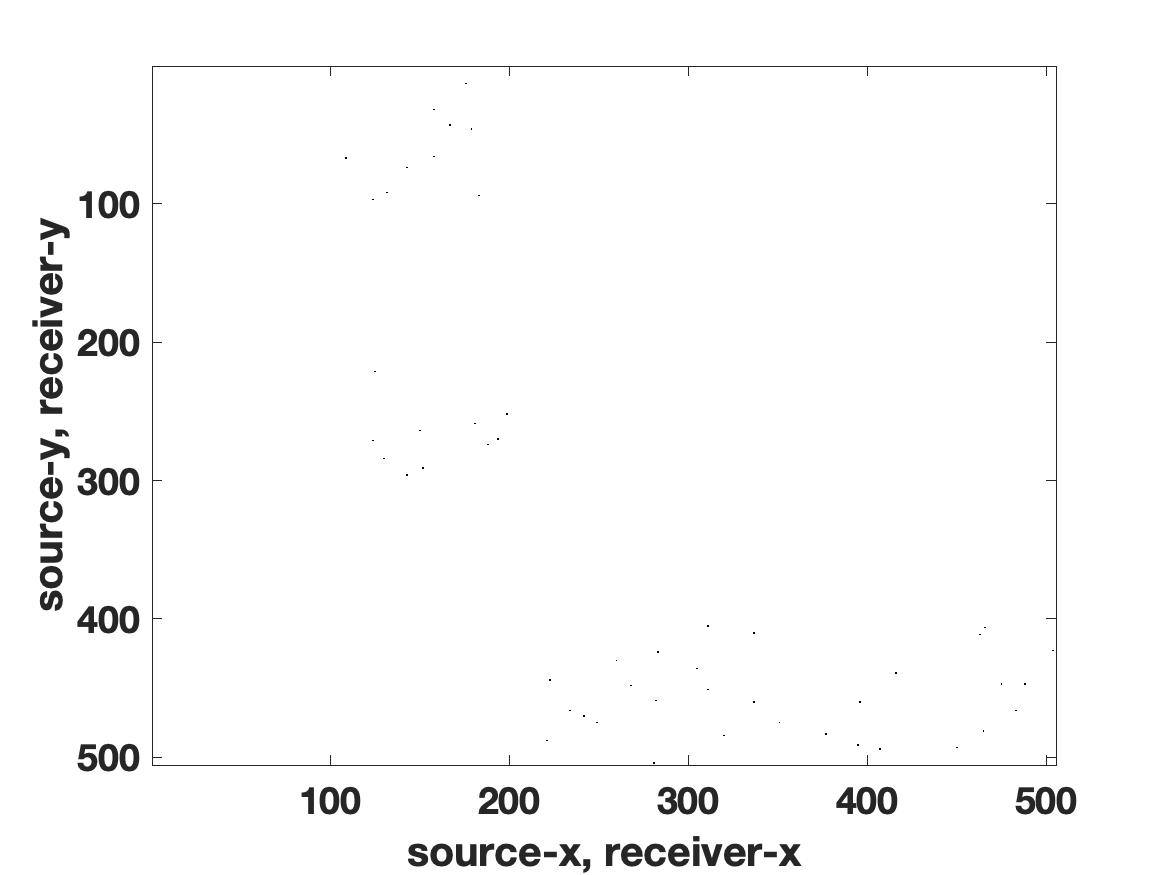}}
\subfigure[Subsampled and noisy data. We again omitted 80\% of sources and added the noise described above to the rest of the sources.]{\label{fig:interdeno_data}\includegraphics[scale=0.14]{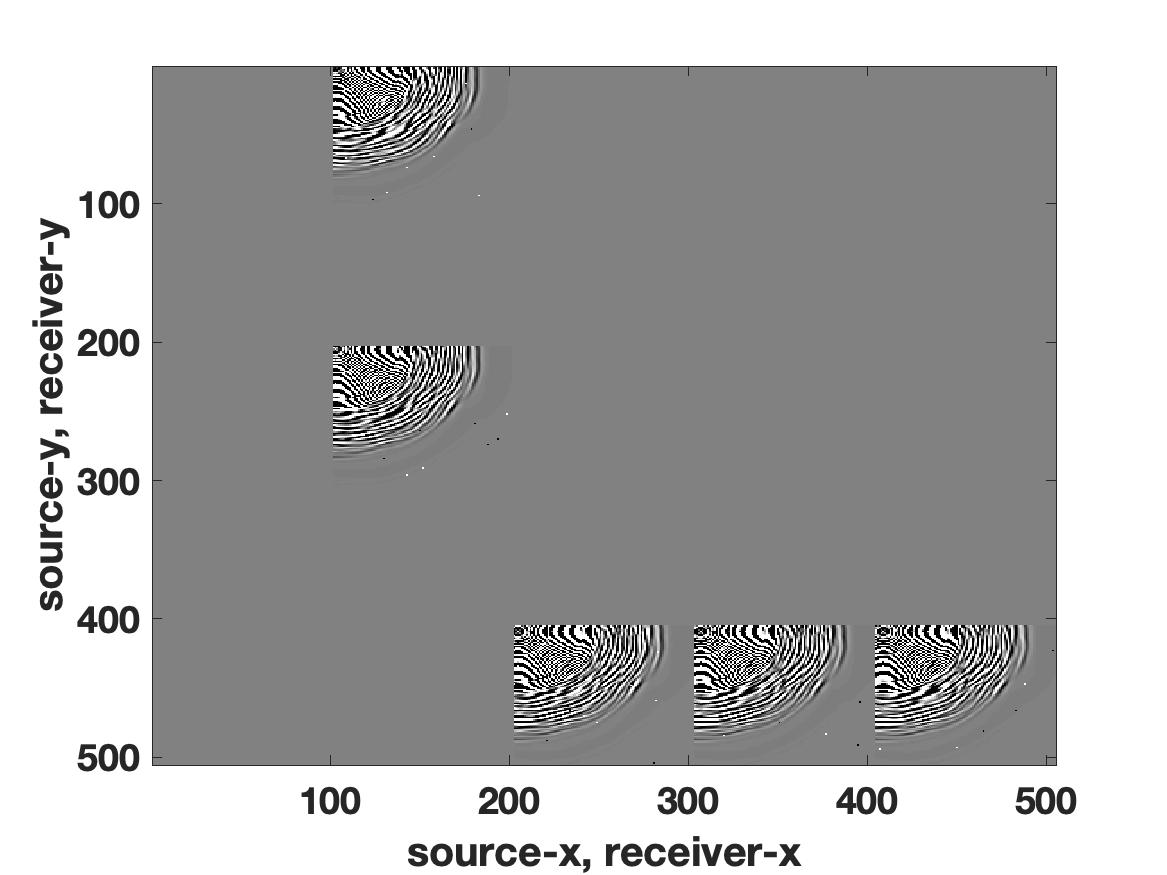}}
\caption{True data and three different experiments for testing our completeness algorithm.}
\label{fig:data}
\end{figure*}
\begin{figure*}
\centering     
\subfigure[SPGLR]{\label{fig:deno_spglr}\includegraphics[scale=0.14]{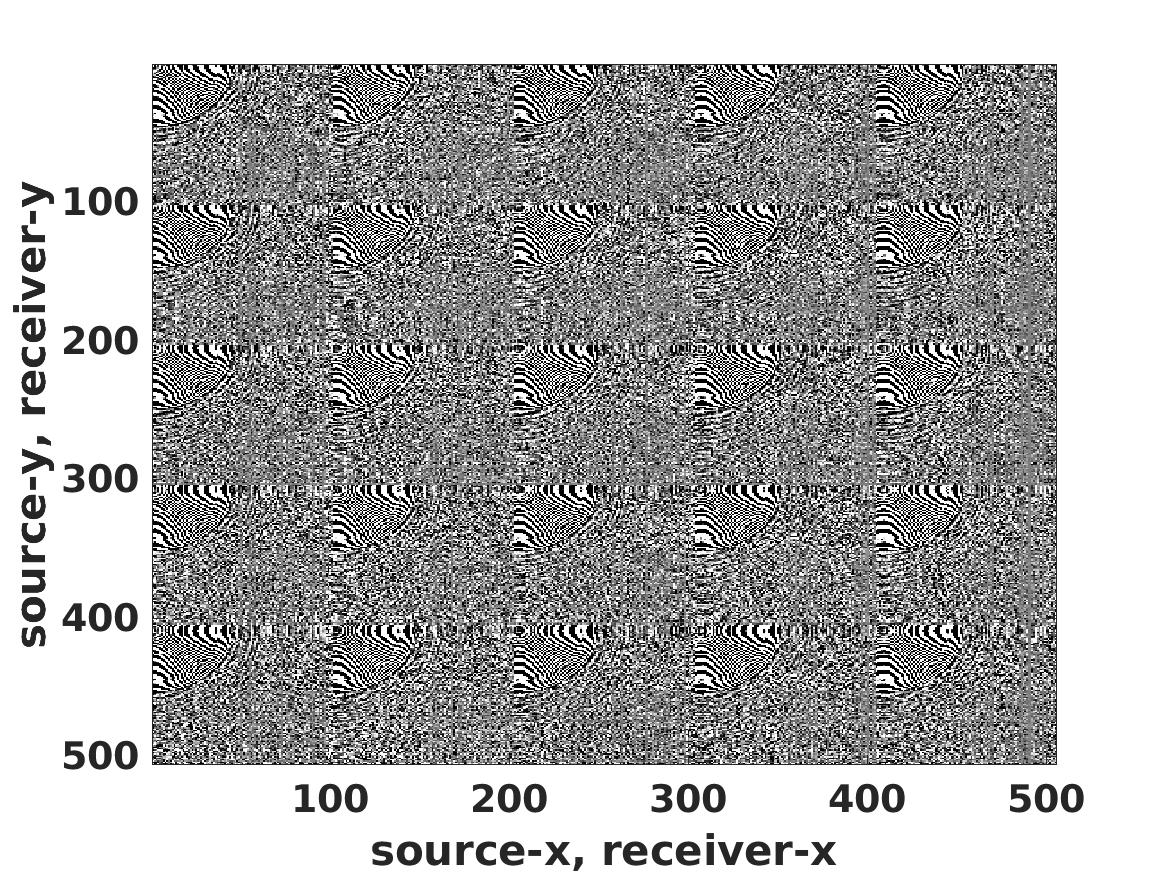}} 
\subfigure[$l_2$]{\label{fig:deno_l2}\includegraphics[scale=0.14]{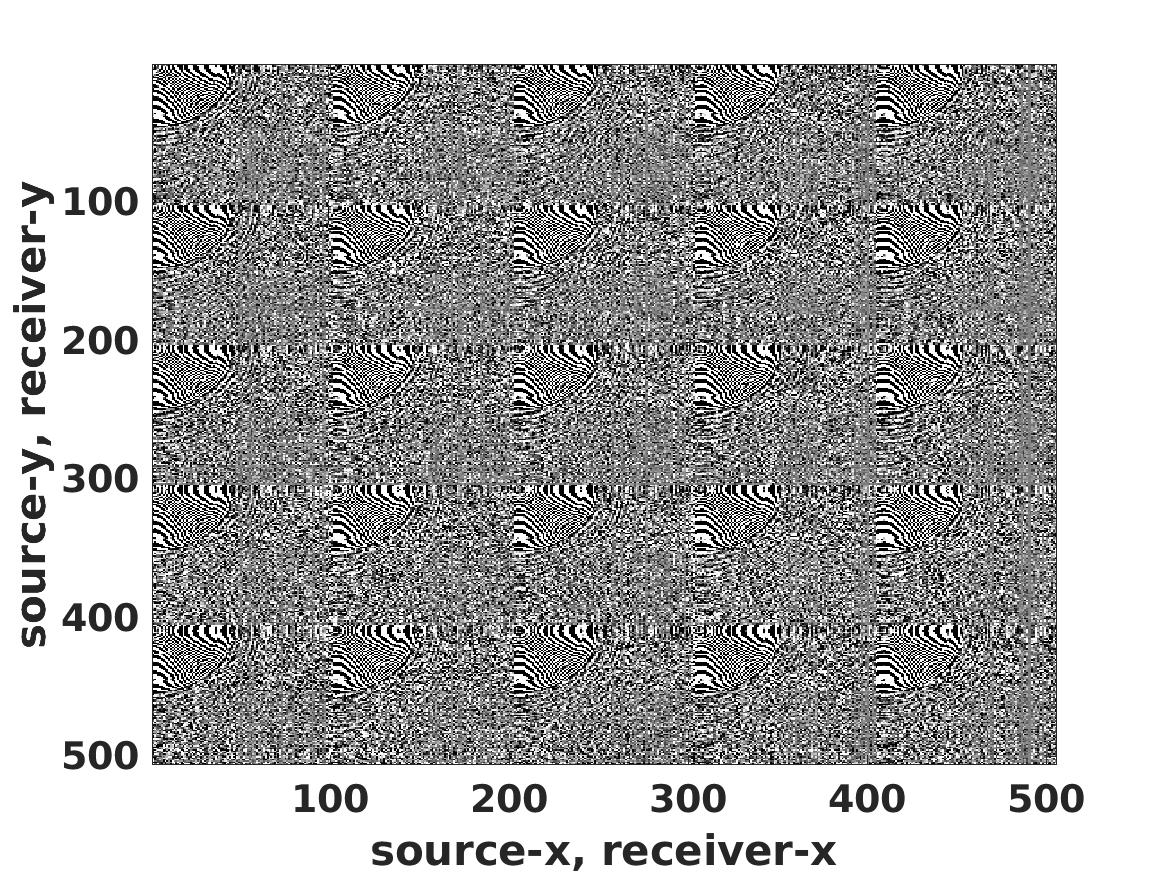}} 
\subfigure[$l_1$]{\label{fig:deno_l1}\includegraphics[scale=0.14]{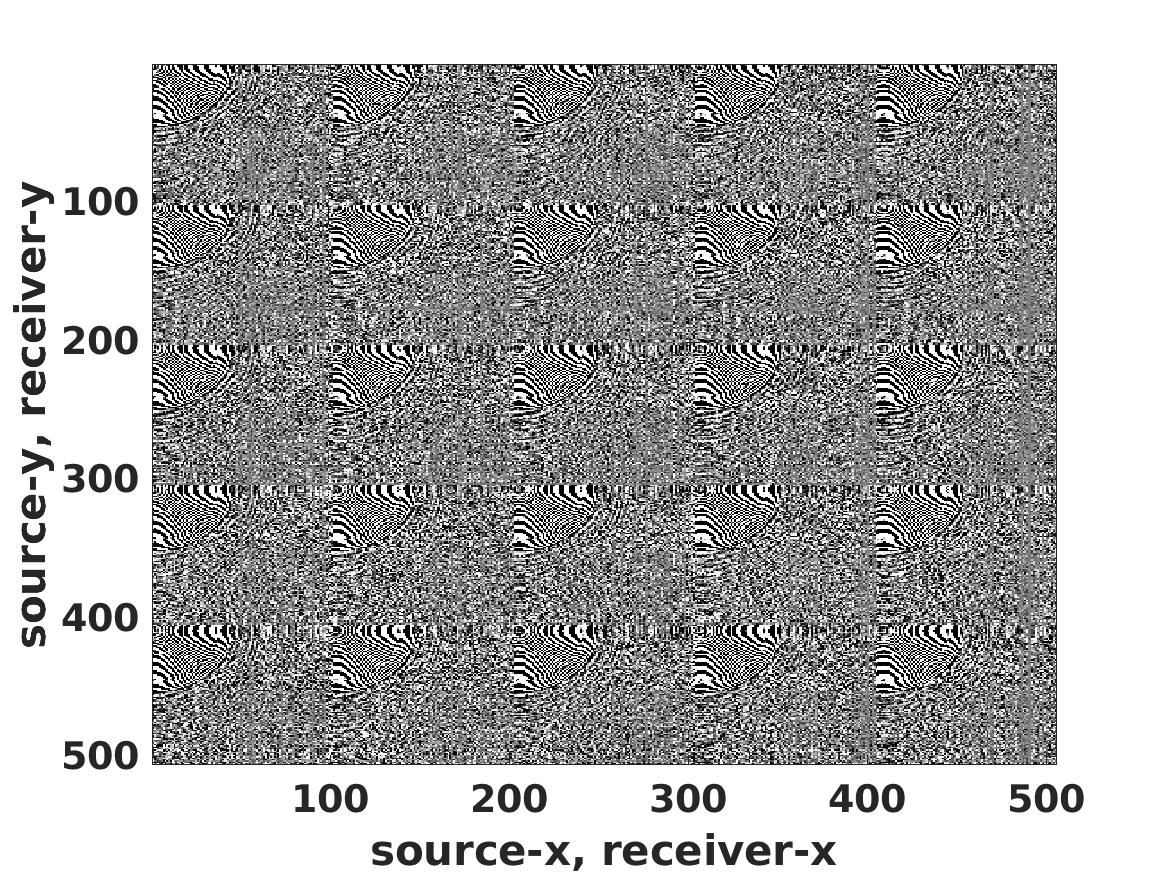}} 
\subfigure[$l_\infty$]{\label{fig:deno_linf}\includegraphics[scale=0.14]{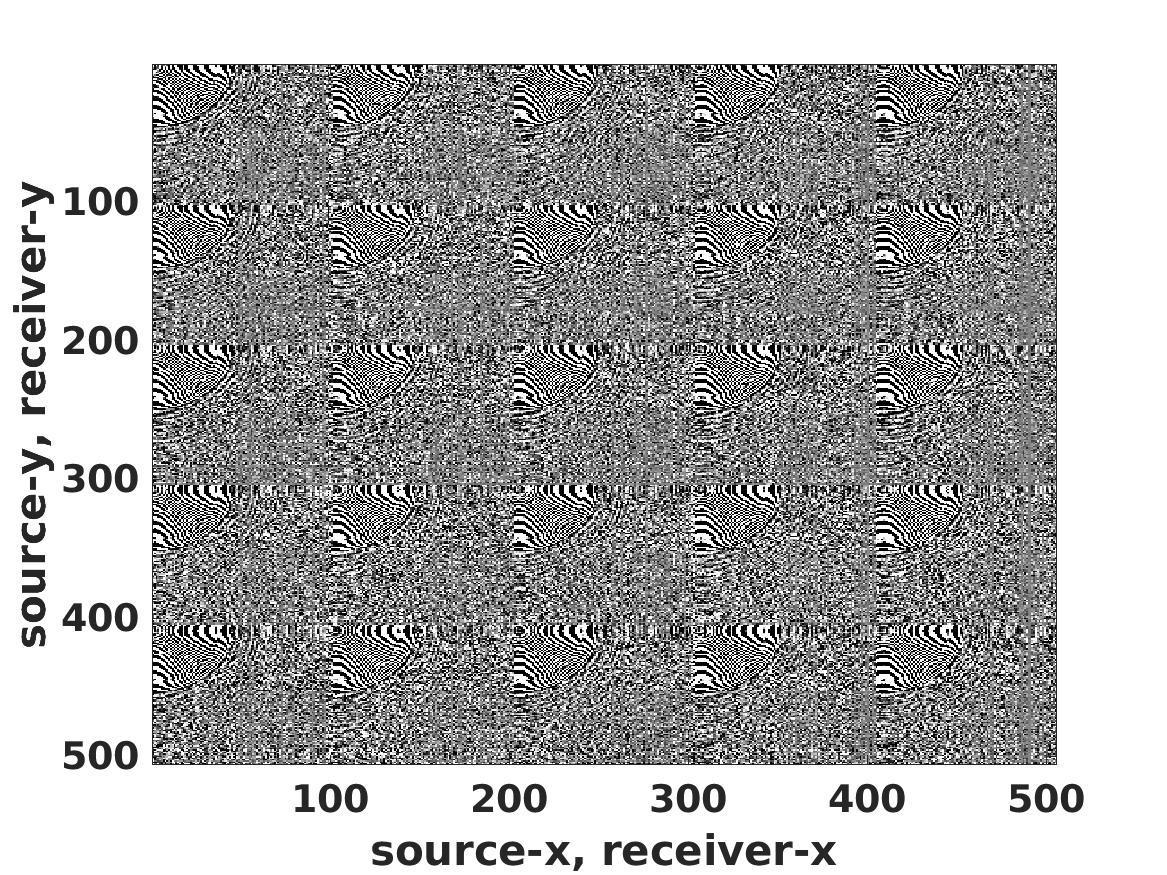}} 
\subfigure[$l_0$]{\label{fig:deno_l0}\includegraphics[scale=0.14]{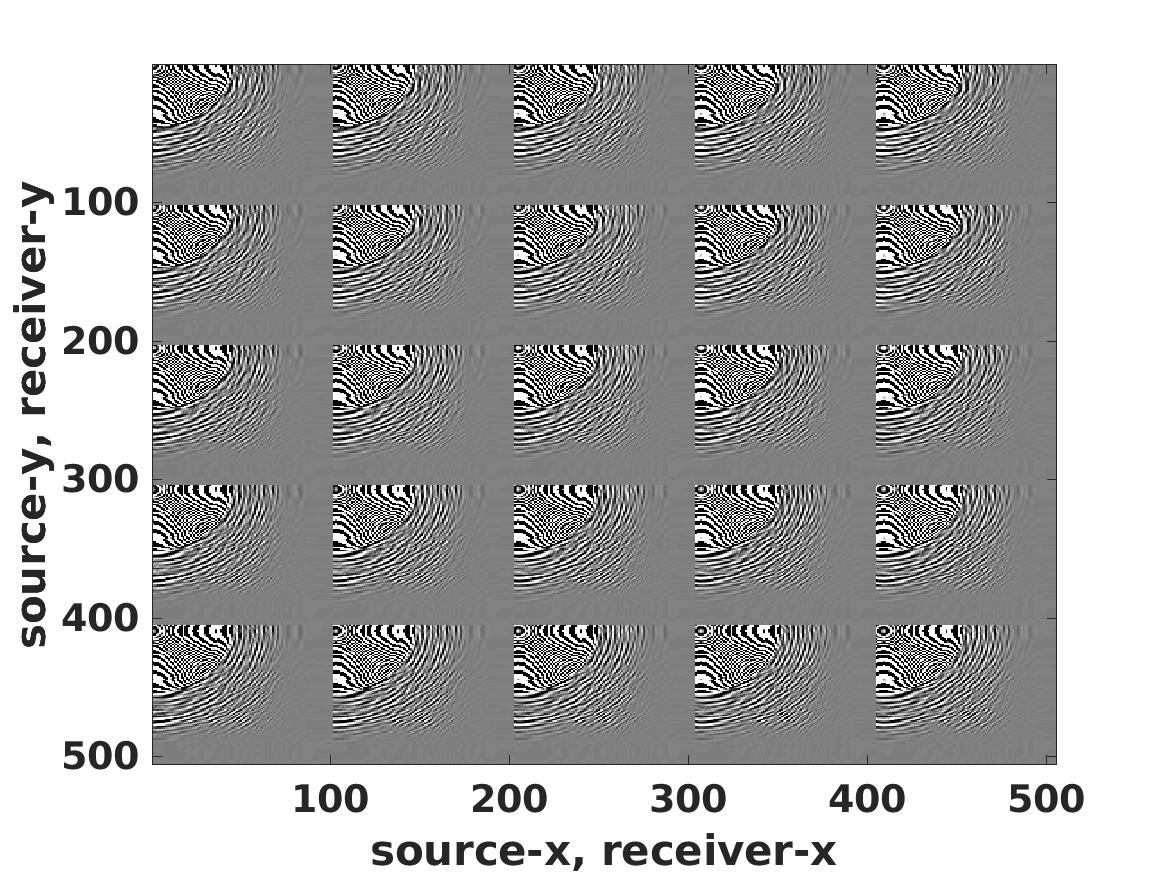}} 
\caption{Denoising-only results.}
\label{fig:deno}
\end{figure*}
\begin{figure*}
\centering     
\subfigure[SPGLR]{\label{fig:inter_spglr}\includegraphics[scale=0.14]{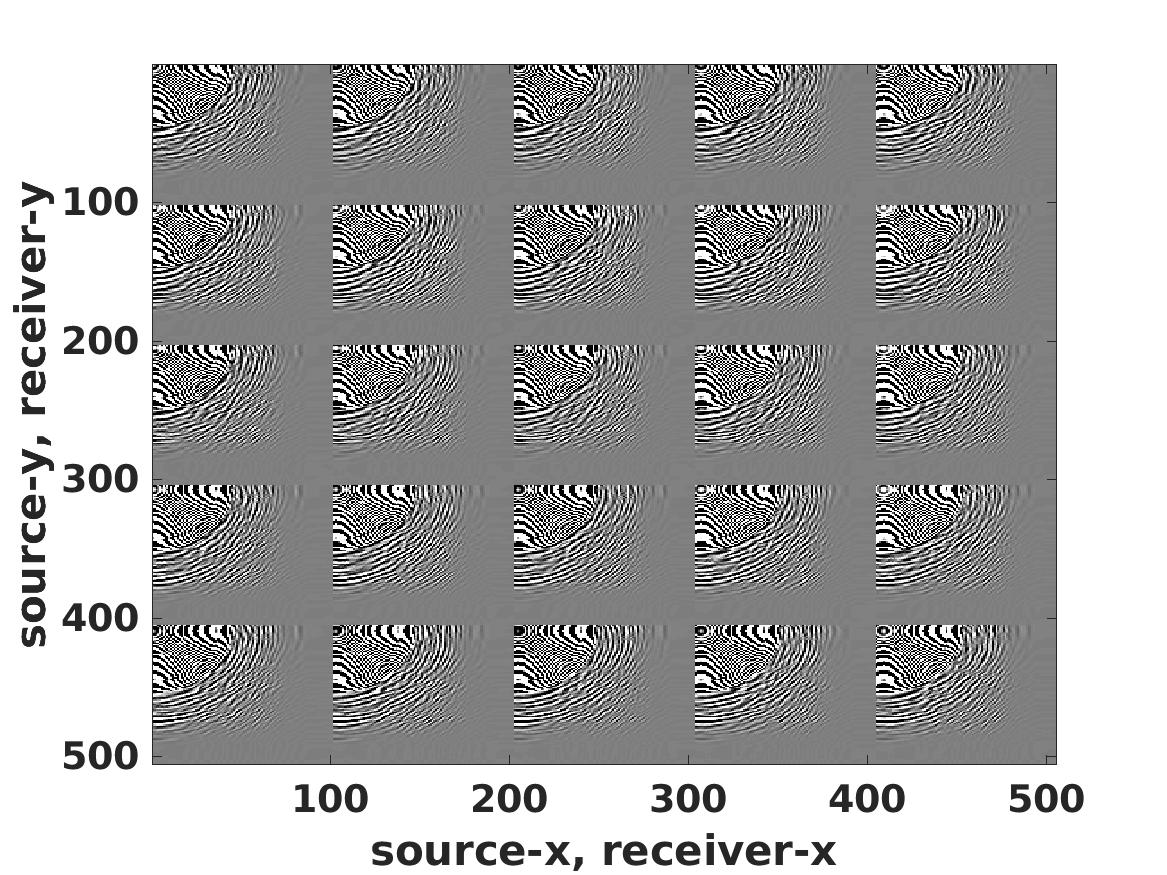}} 
\subfigure[$l_2$]{\label{fig:inter_l2}\includegraphics[scale=0.14]{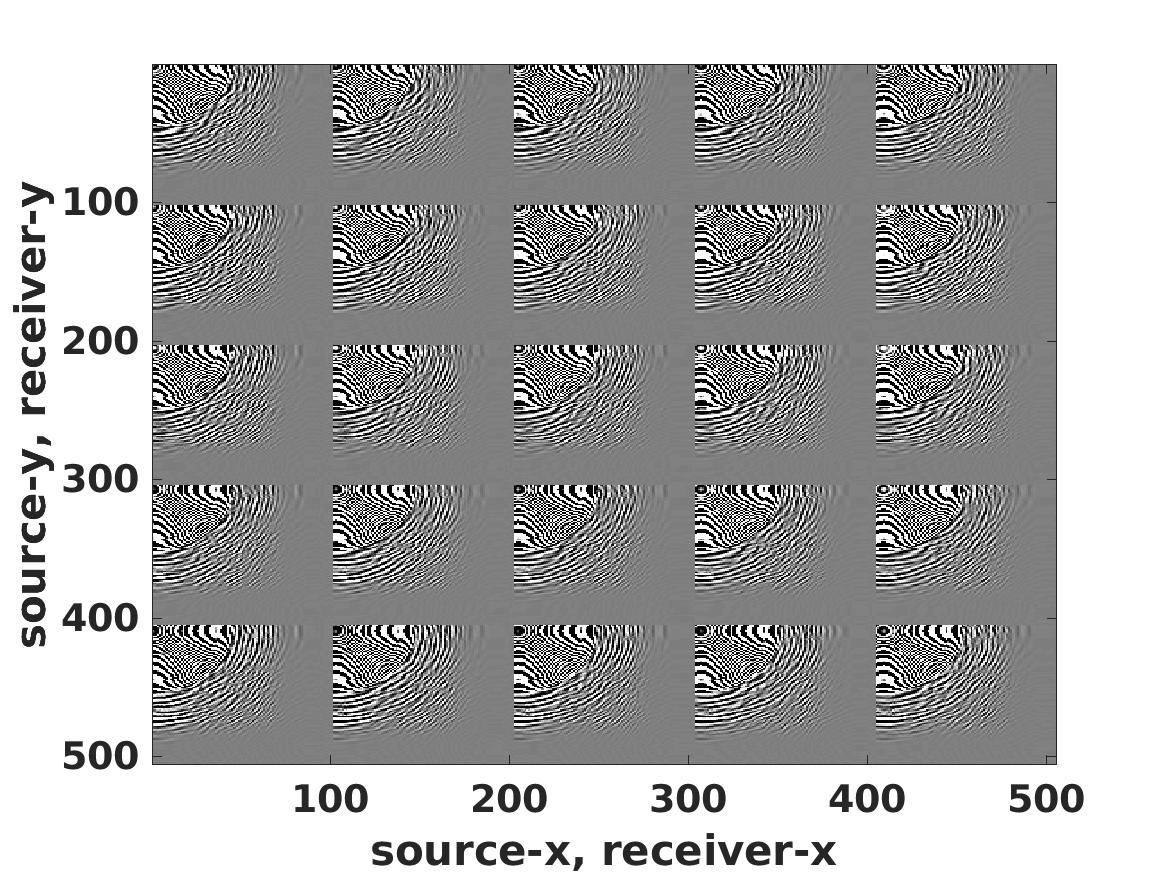}} 
\subfigure[$l_1$]{\label{fig:inter_l1}\includegraphics[scale=0.14]{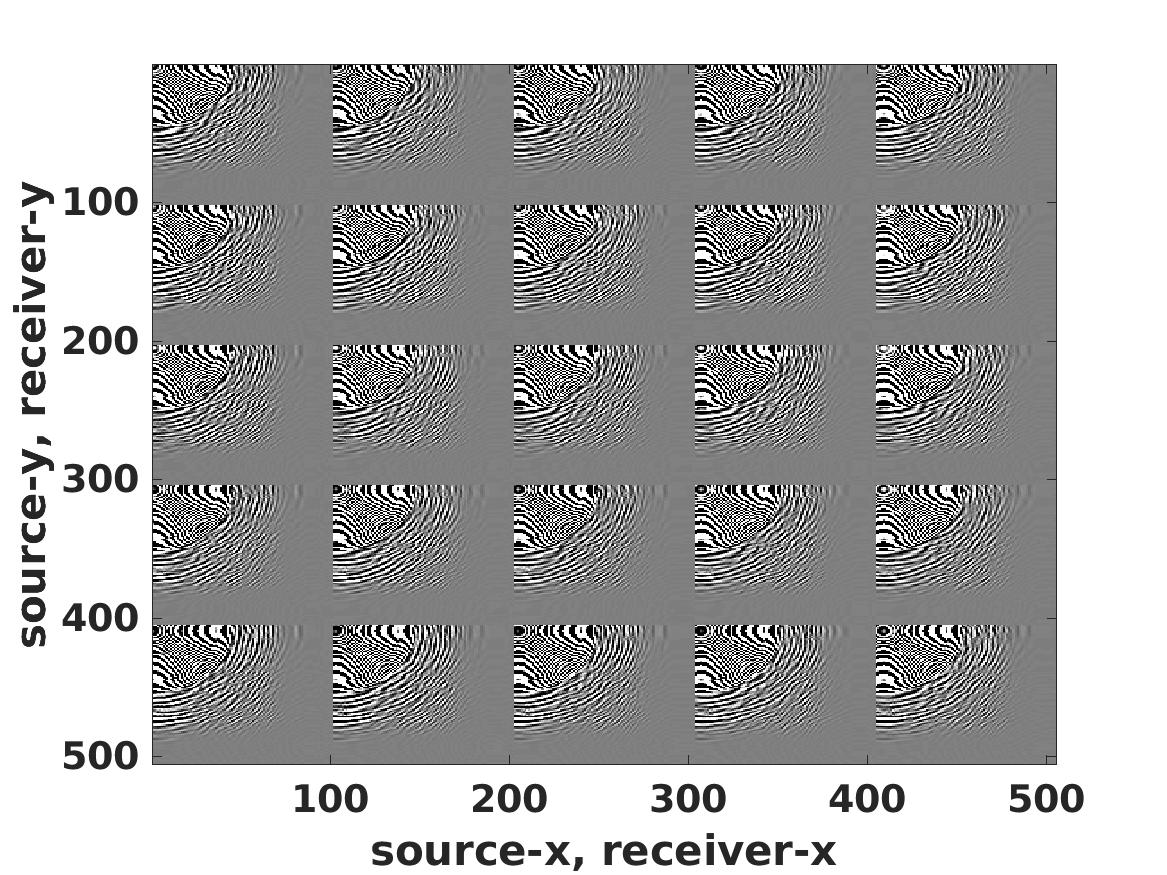}} 
\subfigure[$l_\infty$]{\label{fig:inter_linf}\includegraphics[scale=0.14]{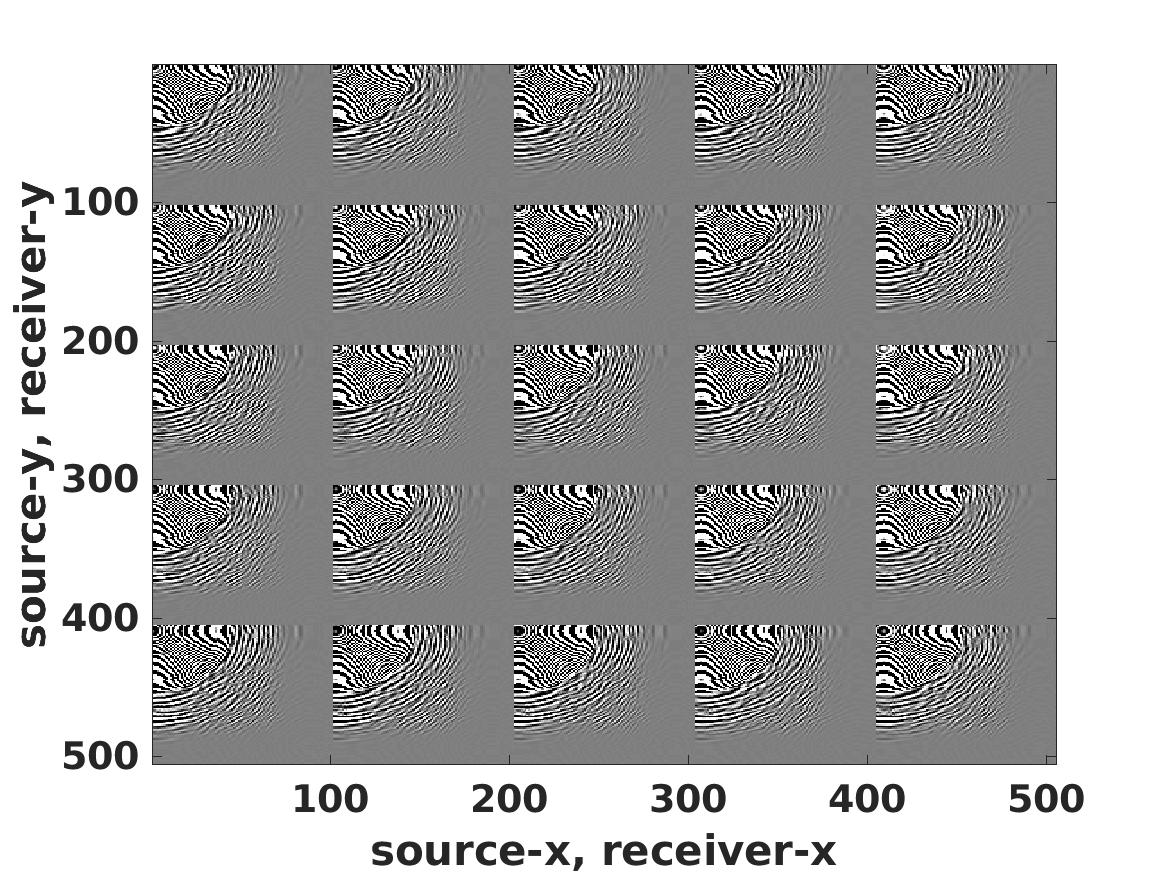}} 
\subfigure[$l_0$]{\label{fig:inter_l0}\includegraphics[scale=0.14]{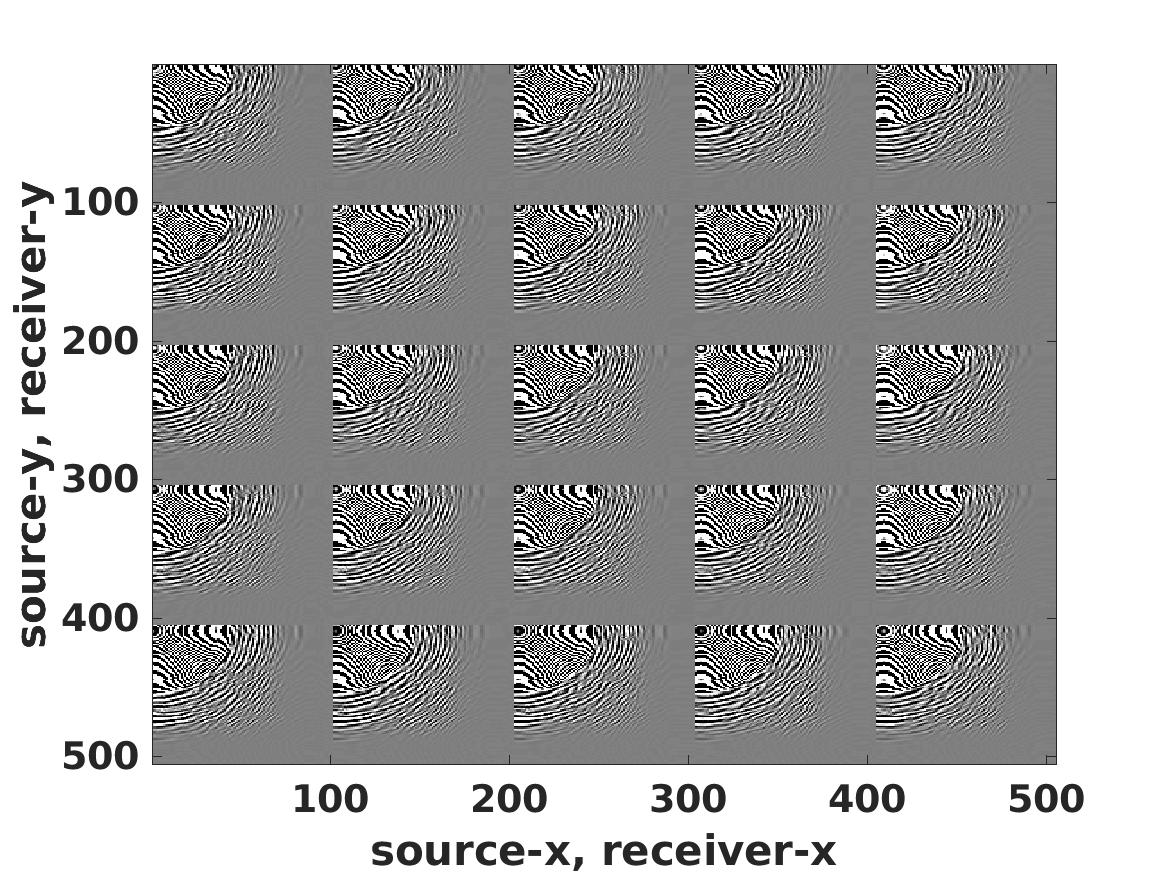}} 
\caption{Interpolation-only results.}
\label{fig:inter}
\end{figure*}

\begin{figure*}
\centering     
\subfigure[SPGLR]{\label{fig:interdeno_spglr}\includegraphics[scale=0.14]{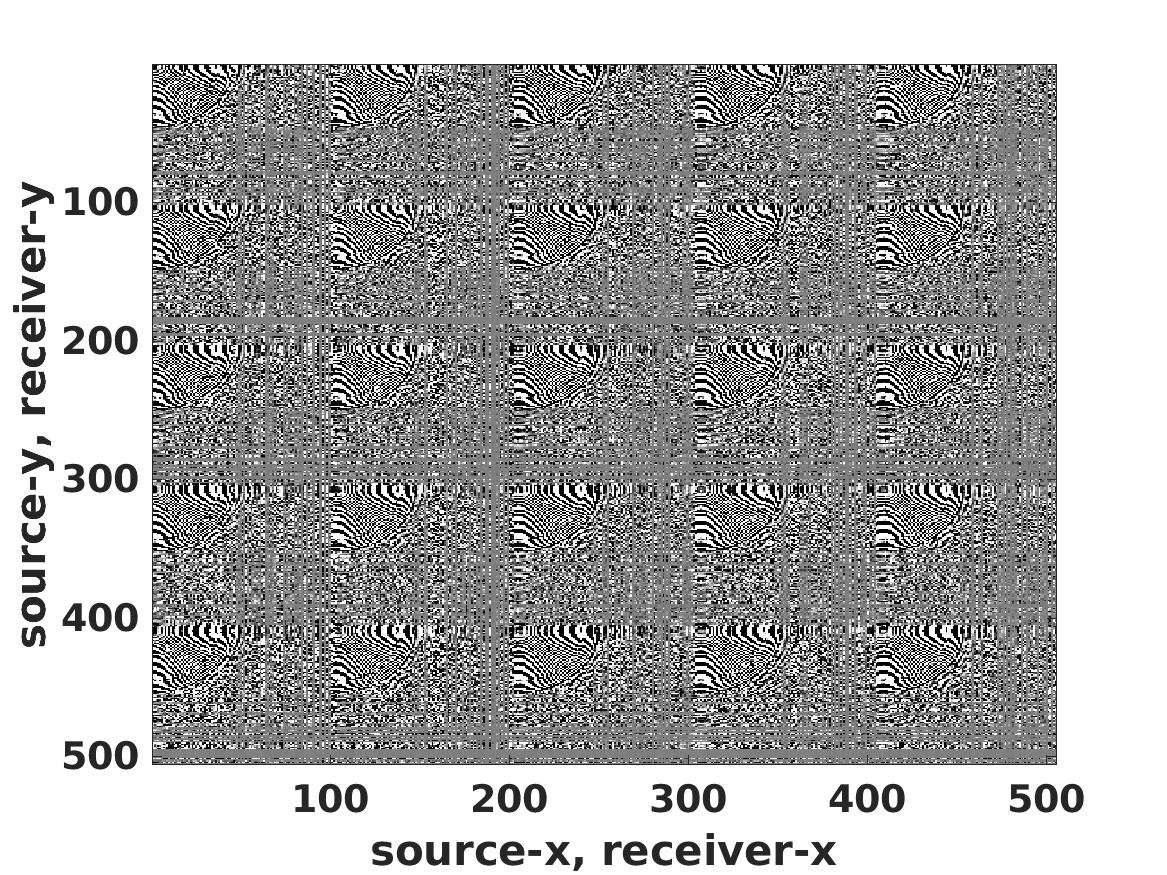}} 
\subfigure[$l_2$]{\label{fig:interdeno_l2}\includegraphics[scale=0.14]{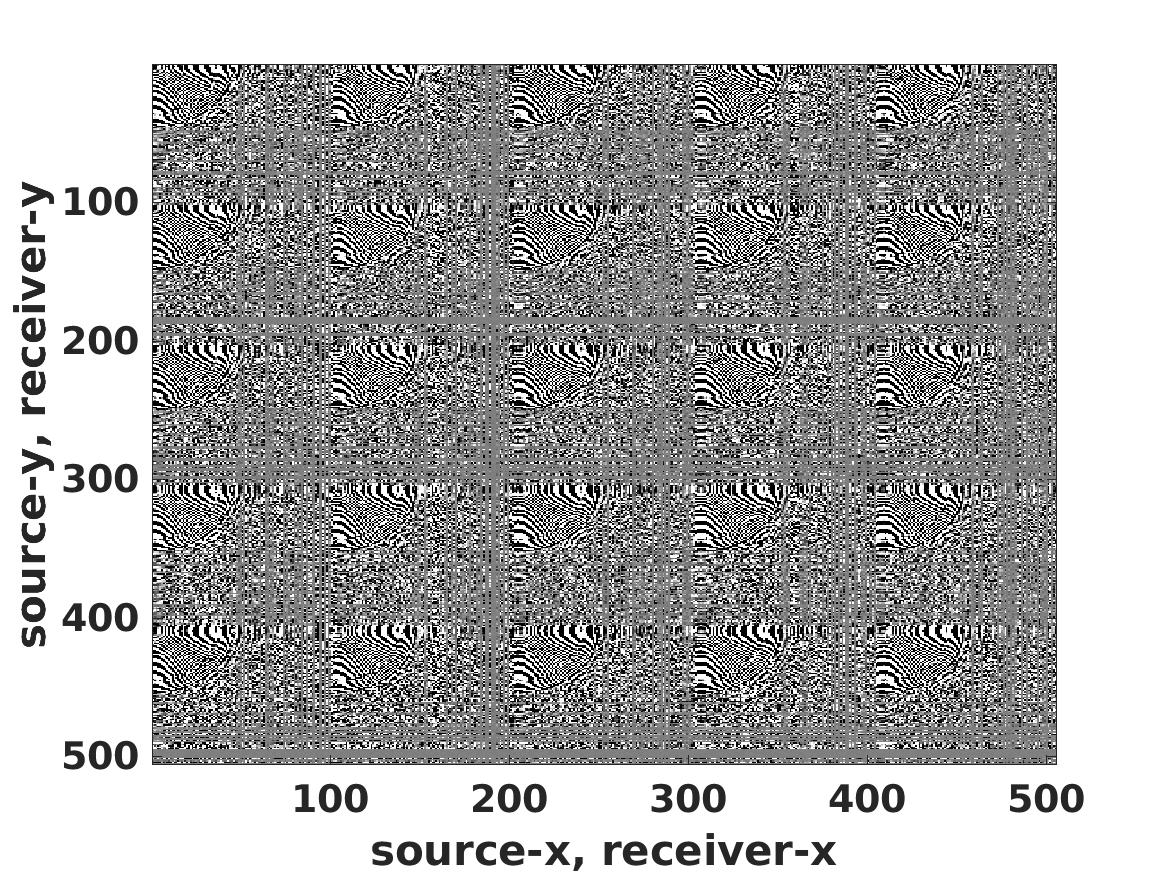}} 
\subfigure[$l_1$]{\label{fig:interdeno_l1}\includegraphics[scale=0.14]{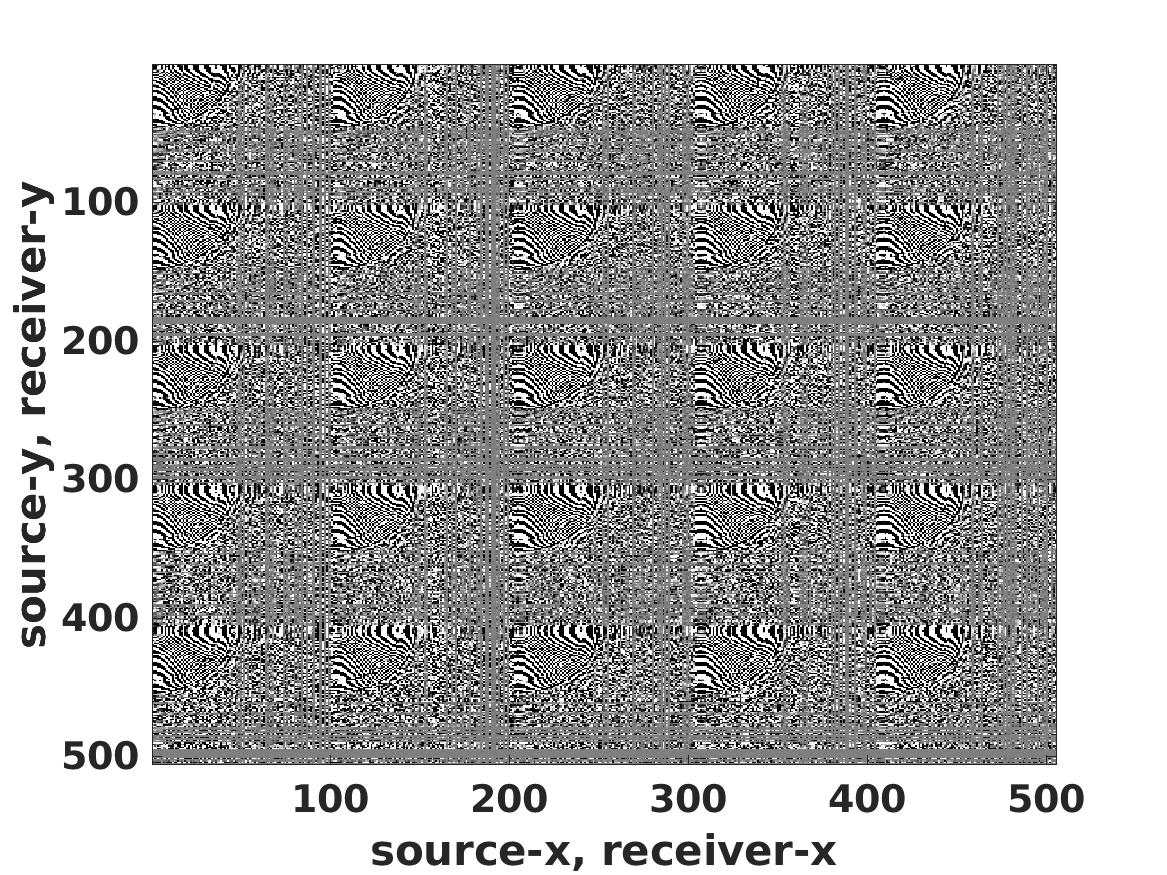}} 
\subfigure[$l_\infty$]{\label{fig:interdeno_linf}\includegraphics[scale=0.14]{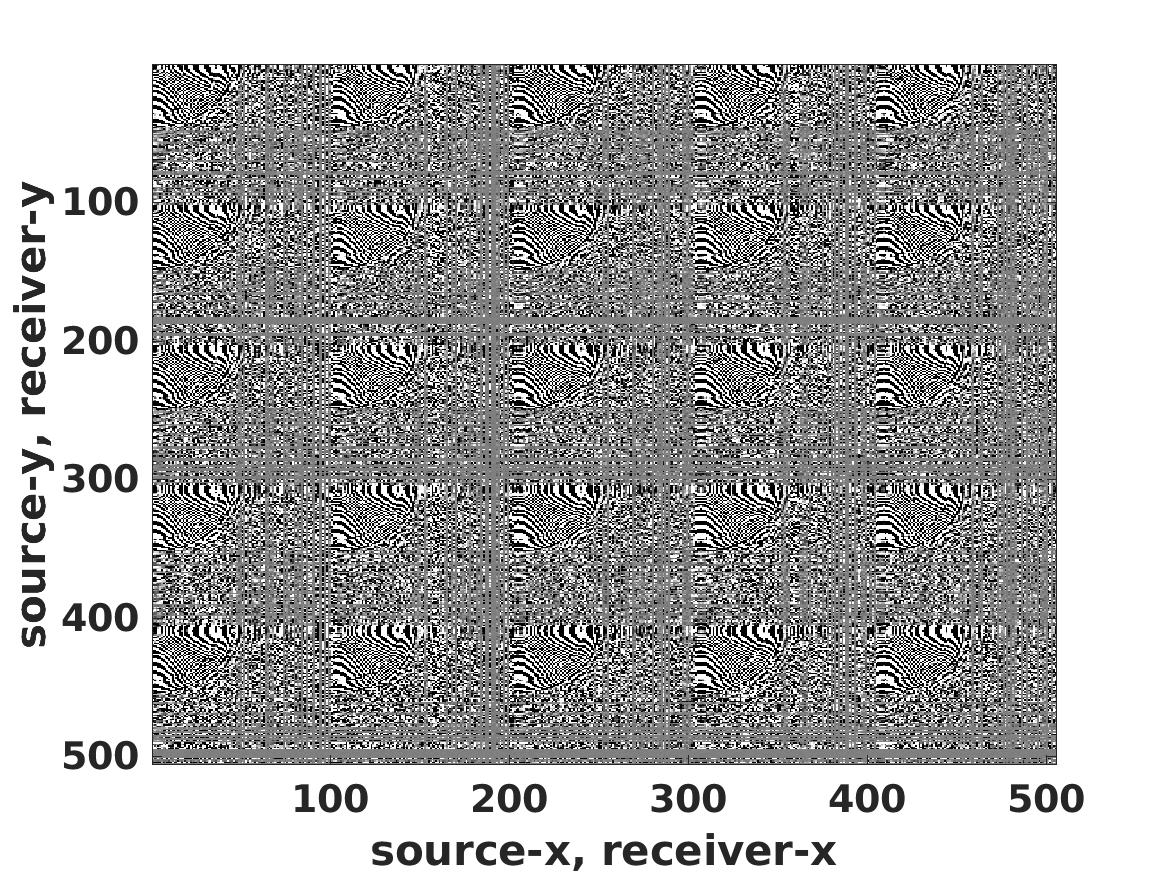}} 
\subfigure[$l_0$]{\label{fig:interdeno_l0}\includegraphics[scale=0.14]{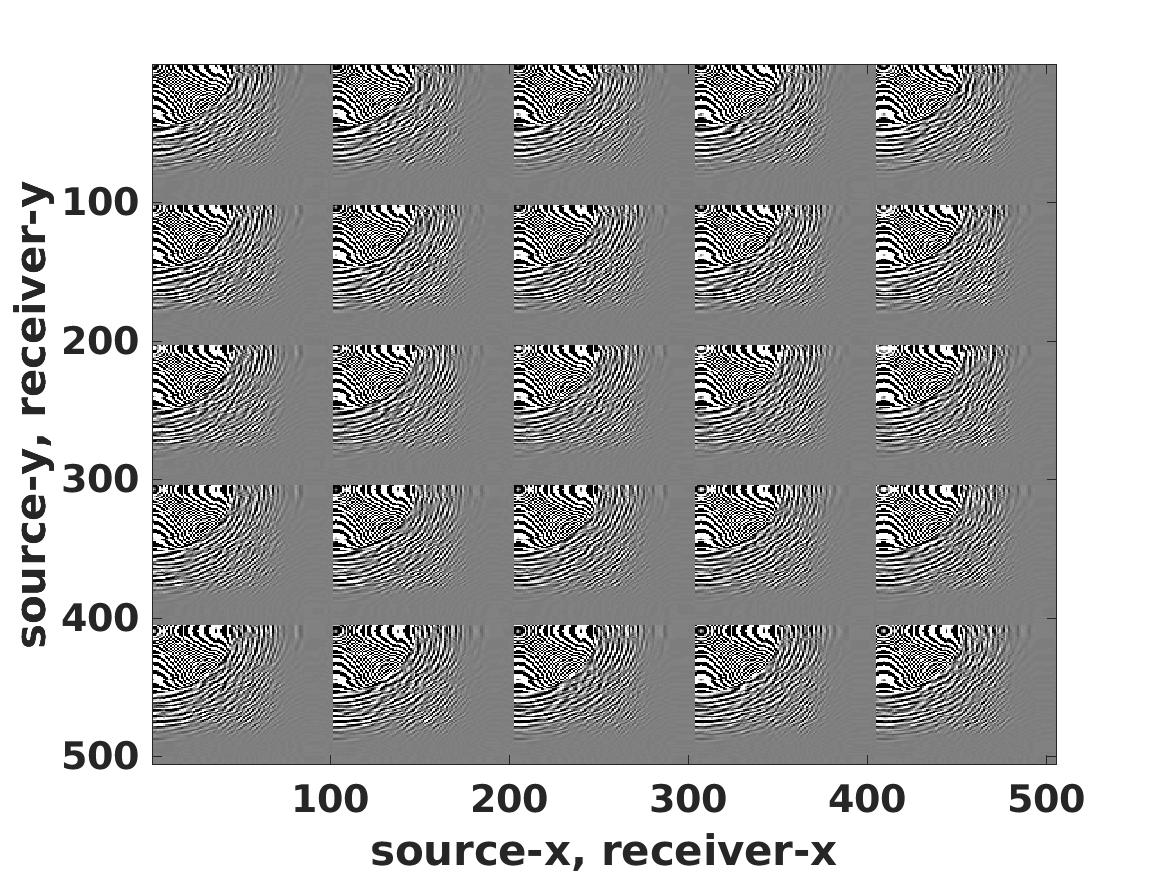}} 
\caption{Interpolation and Denoising results.}
\label{fig:interdeno}
\end{figure*}

\bibliographystyle{abbrv} 
\bibliography{segbib}

\begin{thebibliography}{10}

\bibitem{aminzadeh1994seg}
F.~Aminzadeh, N.~Burkhard, L.~Nicoletis, F.~Rocca, and K.~Wyatt.
\newblock Seg/eaeg 3-d modeling project: 2nd update.
\newblock {\em The Leading Edge}, 13(9):949--952, 1994.

\bibitem{aravkin2018level}
A.~Y. Aravkin, J.~V. Burke, D.~Drusvyatskiy, M.~P. Friedlander, and S.~Roy.
\newblock Level-set methods for convex optimization.
\newblock {\em To appear in Mathematical Programming, Series B.}, 2018.

\bibitem{fastlowrank}
A.~Y. Aravkin, R.~Kumar, H.~E. Mansour, B.~Recht, and F.~J. Herrmann.
\newblock Fast methods for denoising matrix completion formulations, with
  applications to robust seismic data interpolation.
\newblock {\em SIAM J. Scientific Computing}, 36, 2014.

\bibitem{attouch2010proximal}
H.~Attouch, J.~Bolte, P.~Redont, and A.~Soubeyran.
\newblock Proximal alternating minimization and projection methods for
  nonconvex problems: An approach based on the kurdyka-{\l}ojasiewicz
  inequality.
\newblock {\em Mathematics of Operations Research}, 35(2):438--457, 2010.

\bibitem{bell2008algorithmic}
B.~M. Bell and J.~V. Burke.
\newblock Algorithmic differentiation of implicit functions and optimal values.
\newblock In {\em Advances in Automatic Differentiation}, pages 67--77.
  Springer, 2008.

\bibitem{candes2006near}
E.~J. Cand{\`e}s and T.~Tao.
\newblock Near-optimal signal recovery from random projections: universal
  encoding strategies.
\newblock {\em IEEE Transactions on Information Theory}, 52(12):5406--5425,
  2006.

\bibitem{donohue1998bpnd}
S.~S. Chen, D.~L. Donoho, and M.~A. Saunders.
\newblock Atomic decomposition by basis pursuit.
\newblock {\em SIAM Journal on Scientific Computing}, 20(1):33--61, 1998.

\bibitem{da2015optimization}
C.~Da~Silva and F.~J. Herrmann.
\newblock Optimization on the hierarchical tucker manifold--applications to
  tensor completion.
\newblock {\em Linear Algebra and its Applications}, 481:131--173, 2015.

\bibitem{splittingschemes}
D.~Davis and W.~Yin.
\newblock Convergence rate analysis of several splitting schemes.
\newblock In {\em Splitting Methods in Communication, Imaging, Science, and
  Engineering}, pages 115--163. Springer, 2016.

\bibitem{tibshirani2004least}
B.~Efron, T.~Hastie, I.~Johnstone, and R.~Tibshirani.
\newblock Least angle regression.
\newblock {\em The Annals of Statistics}, 32(2):407--99, 2004.

\bibitem{girosi1998ml}
F.~Girosi.
\newblock An equivalence between sparse approximation and support vector
  machines.
\newblock {\em Neural Comp.}, 10(6):1455--1480, 1998.

\bibitem{herrman}
F.~J. Herrmann and G.~Hennenfent.
\newblock Non-parametric seismic data recovery with curvelet frames.
\newblock {\em Geophysical Journal International}, 173(1):233--248, 2008.

\bibitem{kadu2018EAGEdfwi}
A.~Kadu and R.~Kumar.
\newblock Decentralized full-waveform inversion.
\newblock Submitted to EAGE on January 15, 2018, 2018.

\bibitem{kumar2015efficient}
R.~Kumar, C.~Da~Silva, O.~Akalin, A.~Y. Aravkin, H.~Mansour, B.~Recht, and
  F.~J. Herrmann.
\newblock Efficient matrix completion for seismic data reconstruction.
\newblock {\em Geophysics}, 80(5):V97--V114, 2015.

\bibitem{5dlowrank}
R.~Kumar, O.~López, D.~Davis, A.~Y. Aravkin, and F.~J. Herrmann.
\newblock Beating level-set methods for 5-d seismic data interpolation: A
  primal-dual alternating approach.
\newblock {\em IEEE Transactions on Computational Imaging}, 3(2):264--274, June
  2017.

\bibitem{pauly2007mri}
M.~Lustig, D.~Donoho, and J.~Pauly.
\newblock Sparse mri: The application of compressed sensing for rapid mr
  imaging.
\newblock {\em Magnetic Resonance in Medicine}, 58:1182--95, 2007.

\bibitem{oneto2016tikhonov}
L.~Oneto, S.~Ridella, and D.~Anguita.
\newblock Tikhonov, {I}vanov and {M}orozov regularization for support vector
  machine learning.
\newblock {\em Machine Learning}, 103(1):103--136, 2016.

\bibitem{Recht}
B.~Recht, M.~Fazel, and P.~A. Parrilo.
\newblock Guaranteed minimum-rank solutions of linear matrix equations via
  nuclear norm minimization.
\newblock {\em SIAM Rev.}, 52(3):471--501, Aug. 2010.

\bibitem{sacchi}
M.~D. Sacchi, T.~J. Ulrych, and C.~J. Walker.
\newblock Interpolation and extrapolation using a high-resolution discrete
  fourier transform.
\newblock {\em IEEE Transactions on Signal Processing}, 46(1):31--38, Jan 1998.

\bibitem{tropp2006relax}
J.~A. Tropp.
\newblock Just relax: Convex programming methods for identifying sparse signals
  in noise.
\newblock {\em IEEE Transactions on Information Theory}, 52(3):1030--1050,
  March 2006.

\bibitem{tseng2001convergence}
P.~Tseng.
\newblock Convergence of a block coordinate descent method for
  nondifferentiable minimization.
\newblock {\em Journal of optimization theory and applications},
  109(3):475--494, 2001.

\bibitem{van2008probing}
E.~Van Den~Berg and M.~P. Friedlander.
\newblock Probing the pareto frontier for basis pursuit solutions.
\newblock {\em SIAM Journal on Scientific Computing}, 31(2):890--912, 2008.

\bibitem{spgl1}
E.~van~den Berg and M.~P. Friedlander.
\newblock Probing the pareto frontier for basis pursuit solutions.
\newblock {\em SIAM J. Sci. Comput.}, 31(2):890--912, Nov. 2008.

\bibitem{lowrankstorage}
A.~{Yurtsever}, M.~{Udell}, J.~A. {Tropp}, and V.~{Cevher}.
\newblock {Sketchy Decisions: Convex Low-Rank Matrix Optimization with Optimal
  Storage}.
\newblock {\em ArXiv e-prints}, Feb. 2017.

\bibitem{fastnonsmooth}
P.~{Zheng} and A.~{Aravkin}.
\newblock {Fast methods for nonsmooth nonconvex minimization}.
\newblock {\em ArXiv e-prints}, Feb. 2018.

\bibitem{zheng2018sr3}
P.~{Zheng}, T.~{Askham}, S.~L. {Brunton}, J.~N. {Kutz}, and A.~Y. {Aravkin}.
\newblock {A Unified Framework for Sparse Relaxed Regularized Regression: SR3}.
\newblock {\em ArXiv e-prints}, July 2018.

\end{thebibliography}

\end{document}